\numberwithin{equation}{section}
\newtheoremstyle{plain}
{3pt} 
{3pt} 
{} 
{} 
{\bfseries} 
{.} 
{.5em} 
{} 
\newtheorem{thm}{Theorem}[section]
\newtheorem{lemma}[thm]{Lemma}
\newtheorem{prop}[thm]{Proposition}
\newtheorem{cor}[thm]{Corollary}
\newtheoremstyle{definition}
{3pt} 
{3pt} 
{} 
{} 
{\bfseries} 
{.} 
{.5em} 
{} 
\newtheorem{defn}{Definition}[section]
\newtheorem{example}{Example}[section]
\newtheoremstyle{plain}
{3pt} 
{3pt} 
{} 
{} 
{\bfseries} 
{.} 
{.5em} 
{} 
\newtheorem*{remark}{Remark}
\DeclareMathOperator{\wt}{wt}
\DeclareMathOperator{\weight}{weight}
\DeclareMathOperator{\Prob}{Prob}
\DeclareMathOperator{\pr}{pr}
\DeclareMathOperator{\DAE}{dae}
\DeclareMathOperator{\MLQ}{MLQ}
\DeclareMathOperator{\WMLQ}{WMLQ}
\DeclareMathOperator{\RAT}{RAT}
\DeclareMathOperator{\AMLQ}{AMLQ}
\DeclareMathOperator{\tr}{tr}
\DeclareMathOperator{\ASEP}{2-ASEP}
\DeclareMathOperator{\TASEP}{TASEP}
\DeclareMathOperator{\TRAT}{TRAT}
\DeclareMathOperator{\mlq}{mlq}
\DeclareMathOperator{\trat}{trat}
\DeclareMathOperator{\rat}{rat}
\DeclareMathOperator{\lfree}{free_L}
\DeclareMathOperator{\ufree}{free_U}
\DeclareMathOperator{\urest}{urest}
\DeclareMathOperator{\mv}{mv}
\DeclareMathOperator{\Left}{Left}
\DeclareMathOperator{\Up}{Up}
\title{Toric tableaux and the inhomogeneous two-species TASEP on a ring}
\author{Olya Mandelshtam}
\begin{document}

\maketitle
\tableofcontents

\begin{abstract}
The inhomogeneous two-species TASEP on a ring is an exclusion process that describes particles of different species hopping clockwise on a ring with inhomogeneous rates given by parameters. We introduce a new object that we call \emph{toric rhombic alternative tableaux}, which are certain fillings of tableaux on a triangular lattice tiled with rhombi, and are in bijection with the well-studied \emph{multiline queues} of Ferrari and Martin. Using the tableaux, we obtain a formula for the stationary probabilities of the inhomogeneous two-species TASEP, which specializes to results of Ayyer and Linusson. We obtain, in addition, an explicit determinantal formula for these probabilities, and define a Markov chain on the tableaux that projects to the two-species TASEP on a ring. 
\end{abstract}

\section{Introduction}\label{sec_intro}

It is well known that many exclusion processes have remarkable combinatorial structure. For example, the asymmetric simple exclusion process with open boundaries has been studied extensively as a projection of a Markov chain on certain tableaux which have strong connections to a number of important combinatorial objects \cite{CW07, CW11, MV15, CMW17}. On the other hand, the multispecies exclusion process on a ring (i.e.~with periodic boundary conditions) has been found to have a beautiful connection to multiline queues, a construction of Ferrari and Martin \cite{FM07}. In this paper we unify these two approaches to study the combinatorics of the two-species TASEP on a ring.

The two-species totally asymmetric simple exclusion process (2-TASEP) on a ring is a model that describes the dynamics of particles of types 0, 1, and 2 hopping clockwise around a ring of $n$ sites. Adjacent particles can swap places if the one on the left is of larger type. In the \emph{homogeneous 2-TASEP}, the swapping rates are all equal, such as in Figure \ref{TASEP_example}. We call a 2-TASEP in which each possible swap has a different rate the \emph{inhomogeneous 2-TASEP}. In this paper, we study combinatorial solutions for the stationary probabilities of the inhomogeneous 2-TASEP on a ring.

\begin{figure}[!ht]
  \centerline{\includegraphics[height=1.2in]{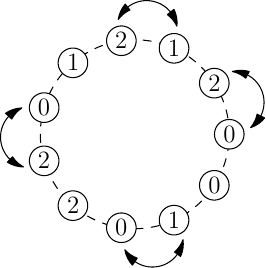}}
\centering
 \caption{A TASEP on a ring of size $(4,3,4)$. The arrows indicate possible swaps at adjacent sites.}\label{TASEP_example}
 \end{figure}

Our interest in the 2-TASEP on a ring stems from two directions. On one hand, the 2-TASEP is a specialization of the two-species asymmetric simple exclusion process (2-ASEP), in which adjacent particles swap places with rate 1 if the one on the left is of larger type, and with rate $q$ otherwise for some parameter $0\leq q\leq 1$. The 2-ASEP has recently been found to have a remarkable connection to moments of Macdonald polynomials \cite{CdGW15}. Thus in our study of combinatorics of the 2-TASEP, we hope to gain insight on the more complex 2-ASEP for which combinatorics are not yet well understood. On the other hand, the 2-TASEP is a special case of the $k$-TASEP with $k$ different types of particles, which has been studied extensively. The $k$-TASEP has a beautiful combinatorial solution in terms of \emph{multiline queues} (MLQs) discovered by Ferrari and Martin in 2005 \cite{FM07}. Our approach to solve the 2-TASEP uses tableaux, which are convenient in many ways. The tableaux are closely related to the well-studied \emph{alternative tableaux}, which solve the 2-ASEP with open boundaries. Furthermore, as we shall see, the tableaux admit a natural addition of parameters which provide a solution for the inhomogeneous 2-TASEP. 

Interest in the inhomogeneous $k$-TASEP arose from work of Lam and Williams \cite{LW12}, who studied a Markov Chain on the symmetric group, and conjectured that probabilities of this related model have a combinatorial solution consisting of polynomials with positive integer coefficients. The conjecture was proved for the 2-TASEP by Ayyer and Linusson \cite{AL14} using multiline queues, and algebraically for the $k$-TASEP by Arita and Mallick \cite{AM13}. In this paper we provide a tableaux proof which specializes to the latter.

This paper is organized as follows. In Section \ref{sec_TRAT}, we define the 2-TASEP on a ring is defined and give the solution in terms of multiline queues. We then describe the tableaux approach to get an equivalent solution. As a corollary, we obtain a determinantal formula for probabilities of states of the 2-TASEP on a ring. In Section \ref{sec_bij_main}, we give two bijections between MLQs and our tableaux. In Section \ref{sec_inhomog}, we obtain a solution to an inhomogeneous 2-TASEP that specializes to the solution in \cite{AL14}. In Section \ref{sec_TASEP_open} we extend our solution to an inhomogeneous 2-TASEP with open boundaries. Finally in Section \ref{sec_markov}, we define Markov chains on the tableaux and the MLQs that both project to the inhomogeneous 2-TASEP on a ring.

\section{The 2-TASEP and toric rhombic alternative tableaux}\label{sec_TRAT}

The 2-TASEP is a Markov chain describing particles of types 0, 1, and 2 hopping on a ring, with the larger particle types having ``priority'' over the smaller ones. The ring has $n$ sites numbered 1 through $n$ with each site occupied by one of the particle types, which we represent as a 1D periodic lattice $\mathbb{Z}/n\mathbb{Z}$. A state is represented by a word $X=X_1\ldots X_n$ where $X_i \in \{2,1,0\}$. The periodicity implies $X_1X_2\ldots X_n$ and $X_2\ldots X_n X_1$ represent the same state. We say $X$ is a state of the TASEP of size $(k,r,\ell)$ if it has $k$ 2's, $r$ 1's, and $\ell$ 0's. We denote by $\TASEP(k,r,\ell)$ the set of states of size $(k,r,\ell)$. For example, Figure \ref{TASEP_example} shows a state of $\TASEP(4,3,4)$. 

The possible transitions of the 2-TASEP chain are the following: both 2 and 1 can swap with adjacent 0's to their right. Additionally, 2 can swap with an adjacent 1 to its right:
\[X\ 2\ 0\ Y \rightarrow X\ 0\ 2\ Y, \qquad X\ 2\ 1\ Y \rightarrow X\ 1\ 2\ Y, \qquad X\ 1\ 0\ Y \rightarrow X\ 0\ 1\ Y,\]
where $X$ and $Y$ are words in $\{2,1,0\}$. In the homogeneous 2-TASEP, all transitions occur with the same rate.

The inhomogeneous multispecies TASEP has also been studied; in this model, parameters represent different hopping rates for different particle types. We will discuss the inhomogeneous 2-TASEP in Section \ref{sec_inhomog}.

A Matrix Ansatz due to Derrida, Evans, Hakim, and Pasquier gives an explicit formula for the stationary probabilities of the states of the two-species TASEP on a ring \cite{DEHP}.

\begin{defn}\label{dae}
Let $X=X_1\ldots X_n$ be a state of the 2-TASEP. For some set of matrices $D, A, E$ define $\DAE(X) = \DAE(X_1)\ldots\DAE(X_n)$ to be the matrix product given by the map $\DAE(2)\mapsto D$, $\DAE(1)\mapsto A$, and $\DAE(0)\mapsto E$. For example, $\DAE(221021)=DDAEDA$.
\end{defn}

\begin{thm}[\cite{DEHP}]\label{ansatz0}
Let $D, A, E$ be matrices that satisfy the following relations:
\begin{align*}
DE&=D+E\\
DA&=A\\
AE&=A\\
\end{align*}
Then the stationary probability of a state $X$ of the two-species TASEP on a ring  of size $(k,r,\ell)$ is given by
\[\Pr(X) = \frac{1}{{n \choose k}{n \choose \ell}} \tr(\DAE(X)),\]
where $\DAE(X)$ is given by Definition \ref{dae}.
\end{thm}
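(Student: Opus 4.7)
The plan is to verify that $f(X) := \tr(\DAE(X))$ satisfies the master equation for the stationary distribution, namely
\[
\sum_Y \bigl[f(Y)\,q(Y\to X) - f(X)\,q(X\to Y)\bigr] \;=\; 0,
\]
using the classical ``hat matrix'' trick of Derrida--Evans--Hakim--Pasquier. First I would write the left-hand side as a sum over the $n$ edges of the ring. At each edge $i$, only the pairs $(X_i,X_{i+1})$ inducing an allowed swap contribute, and after factoring out the (identical) matrices outside positions $i,i+1$, the net contribution is $\tr\bigl(M_1\cdots M_{i-1}\,\mathcal{L}(X_i,X_{i+1})\,M_{i+2}\cdots M_n\bigr)$, where $M(x)$ denotes $D, A, E$ for $x = 2, 1, 0$. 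Unpacking the transition rules gives $\mathcal{L}(0,2) = -\mathcal{L}(2,0) = DE$, $\mathcal{L}(1,2) = -\mathcal{L}(2,1) = DA$, $\mathcal{L}(0,1) = -\mathcal{L}(1,0) = AE$, and $\mathcal{L}(x,x) = 0$ for $x \in \{0,1,2\}$ (since non-swapping edges contribute nothing).

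The heart of the proof is to find auxiliary matrices $\widehat D,\widehat A,\widehat E$ such that for every ordered pair $(x,y)$,
\[
\mathcal{L}(x,y) \;=\; \widehat M(x)\,M(y) \;-\; M(x)\,\widehat M(y).
\]
Once such hat matrices exist, the edge-$i$ contribution splits as $\tr(\cdots \widehat M_i M_{i+1}\cdots) - \tr(\cdots M_i \widehat M_{i+1}\cdots)$, and by the cyclic invariance of the trace the full sum $\sum_i$ telescopes to $0$: each position $j$ contributes $+\tr(\cdots\widehat M_j\cdots)$ from edge $j-1$ and a cancelling $-\tr(\cdots\widehat M_j\cdots)$ from edge $j$. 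A direct check shows that the simple ansatz $\widehat D = -I$, $\widehat E = I$, $\widehat A = 0$ works: the three ``diagonal'' identities $[\widehat M(x), M(x)] = 0$ hold because each hat matrix is central, while the six ``off-diagonal'' identities reduce by one-line computations to precisely $DE = D+E$, $DA = A$, and $AE = A$ (for instance, $\widehat E D - E \widehat D = D + E = DE$ matches $\mathcal{L}(0,2)$). This establishes that $f$ is a stationary measure.

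The normalization $\binom{n}{k}\binom{n}{\ell}$ is the partition function $Z = \sum_{X\in\TASEP(k,r,\ell)} \tr(\DAE(X))$ evaluated in the faithful representation tacit to the ansatz; its value can be extracted from the bijection between states and multiline queues that will be described in the next section, since the total number of MLQs of content $(k,r,\ell)$ is $\binom{n}{k}\binom{n}{\ell}$. The main obstacle in this strategy is guessing the hat matrices: the system $\mathcal{L}(x,y) = \widehat M(x) M(y) - M(x) \widehat M(y)$ is overdetermined (nine equations in three matrix unknowns), and its solvability is a non-trivial consistency check reflecting the careful design of the defining relations. Fortunately, that the ansatz admits the trivial scalar solution $(-I, 0, I)$ makes the verification essentially a three-line calculation once the setup is in place.
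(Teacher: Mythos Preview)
The paper does not give its own proof of this theorem; it is stated with attribution to \cite{DEHP}, followed only by an explicit choice of matrices $D,A,E$ and a remark deferring the value of the partition function to the multiline-queue enumeration. Your proposal is precisely the classical DEHP argument and it is correct: with $\widehat D=-I$, $\widehat A=0$, $\widehat E=I$, the nine identities $\mathcal{L}(x,y)=\widehat M(x)M(y)-M(x)\widehat M(y)$ reduce exactly to the three defining relations $DE=D+E$, $DA=A$, $AE=A$ (plus trivialities on the diagonal), and cyclicity of the trace makes the sum over edges telescope. Your treatment of the normalization---postponing it to the MLQ count $\binom{n}{k}\binom{n}{\ell}$---matches what the paper itself does in the Remark following the theorem.
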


Matrices $D, A, E$ satisfying the Ansatz relations are not unique. One possible choice is:
\[
D=
\begin{pmatrix} 0&1&0&\\0&0&1&\ldots\\0&0&0&\\&\vdots&&\ddots
\end{pmatrix}
\qquad
A=
\begin{pmatrix} 1&0&0&\\1&0&0&\ldots\\1&0&0&\\&\vdots&&\ddots
\end{pmatrix}
\qquad
E=
\begin{pmatrix} 1&0&0&\\1&1&0&\ldots\\1&1&1&\\&\vdots&&\ddots
\end{pmatrix}
\]

\begin{example}
For the state $X=12011020$, $\Pr(X) = \frac{8}{{8 \choose 2}{8 \choose 3}}\tr(ADEAAEDE) = \frac{32}{{8 \choose 2}{8 \choose 3}}=\frac{1}{49}$.
\end{example}

\begin{remark}
The fact that the partition function (i.e.~normalizing factor) for the probabilities of a 2-TASEP of size $(k,r,\ell)$ is $\frac{1}{n}{n \choose k}{n \choose \ell}$ is well-known, and we will not prove it here. One way to see this is through enumeration of \emph{multiline queues}, which we discuss in the following subsection.
\end{remark}

\subsection{Multline queues}

\emph{Multiline queues} (MLQs), first introduced by Ferrari and Martin, give an elegant combinatorial formula for the stationary probabilities of the $k$-TASEP on a ring \cite{FM07}. The formula holds for any $k$, but for our purposes we define only MLQs that correspond to the 2-TASEP.

Let $k+r+\ell=n$. An MLQ of size $(k,r,l)$ is a stack of two rows of balls and vacancies with the bottom row having $r+\ell$ balls and the top row having $\ell$ balls, all within a box of size $2 \times n$. Locations are labeled from left to right with $1,\dots,n$. We identify the left and right edges of the box, making it a cylinder; thus location 1 is to the right of and adjacent to location $n$. 

Each MLQ corresponds to a state of the TASEP, which is determined by a \emph{ball drop algorithm}, consisting of balls from the top row dropping to occupy balls in the bottom row. In this algorithm, top row balls drop to the bottom row and occupy the first unoccupied bottom row ball weakly to the right. Once all the top row balls have been dropped, each occupied bottom row ball is marked as a 0-ball, and each unoccupied bottom row ball is marked as a 1-ball. A state of the TASEP is read off the bottom row by associating 0-balls, 1-balls, and vacancies to type 0, 1, and 2 particles respectively. See Figure \ref{MLQ_standard} for an example. We call this state the \emph{type} of the MLQ. For an MLQ $M$ of type $X=X_1\ldots X_n$, we denote by $M(j)$ the particle $X_j$. 

\begin{remark}
The state read off the MLQ is independent of the order in which top row balls are dropped. However, in Section \ref{sec_bij}, we will require that balls are dropped from right to left, for the purpose of our bijections.
\end{remark}

\begin{figure}[!ht]
  \centerline{\includegraphics[width=0.8\linewidth]{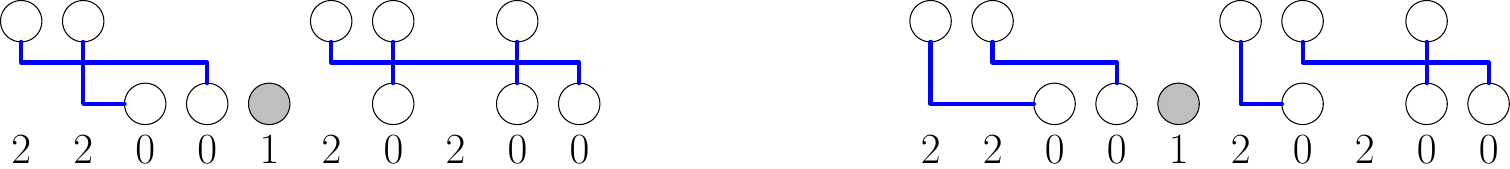}}
\centering
 \caption{Both figures represent the same MLQ of size $(4,1,5)$ and type $X=2200120200$. On the left, the top row balls are dropped from right to left, and on the right the top row balls are dropped in a different order. The occupied bottom row balls (i.e.~the 0-balls) correspond to a type 0 particle, and the unoccupied bottom row balls (i.e.~the 1-balls) correspond to a type 1 particle.}\label{MLQ_standard}
 \end{figure}

\begin{defn}
Let $\MLQ(X)$ be the set of \emph{distinct} MLQs of type $X$. By distinct, we mean that no two are cyclic shifts of each other. Figure \ref{MLQs} shows the set $\MLQ(12020)$.
\end{defn}

\begin{figure}[!ht]
  \centerline{\includegraphics[width=.8\linewidth]{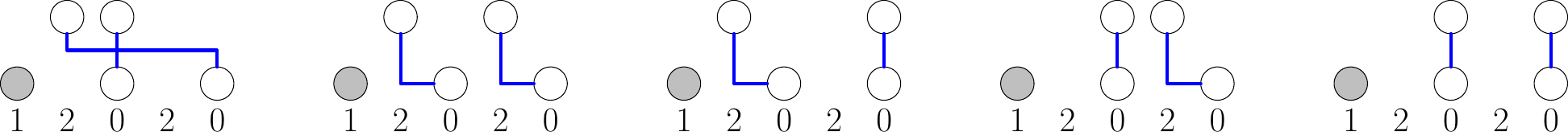}}
\centering
 \caption{The set $\MLQ(12020)$.}\label{MLQs}
 \end{figure}
 
For a formal definition, let $B=\{x_1,\ldots,x_{r+\ell}\}$ be the locations of the bottom row balls. The balls that are occupied by a dropping top row ball are in the set of locations
 \[
  H=\{x_i\ :\ \exists j\ \mbox{such that there are}\ \geq j\ \mbox{top row balls in the interval}\ [x_{i-j}+1,x_i]\}.
 \]
Then the balls in set $H$ are the 0-balls, and the balls in set $B\backslash H$ are the 1-balls, which are mapped to type 0 and type 1 particles respectively. 

Since the $\ell$ balls in the top row and the $r+\ell$ balls in the bottom row can be chosen independently, there is a total of ${n\choose\ell}{n\choose k}$ MLQs of size $(k,r,\ell)$ (where all cyclic shifts are included).

The following theorem of Ferrari and Martin gives an expression for probabilities of the 2-TASEP in terms of the MLQs. We remark that this theorem also holds for the $k$-TASEP on a ring with a more general definition of MLQs.

\begin{thm}[\cite{FM07}]\label{thm_mlq}
Let $X$ be a state of the two-species TASEP on a ring of size $(k,r,\ell)$ with $n=k+r+\ell$. Then 
\[
\Pr(X)=\frac{o(X)}{{n \choose k}{n \choose \ell}}|\MLQ(X)|,
\]
where $o(X)$ is the number of elements in the class of cyclic shifts of $X$.
\end{thm}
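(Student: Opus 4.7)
My plan is to adapt the original Ferrari--Martin proof: I would construct a continuous-time Markov chain on the set of all size-$(k,r,\ell)$ MLQs whose projection under the ball-drop map $\type\colon M\mapsto \type(M)$ is exactly the 2-TASEP chain, and then show that the uniform distribution on MLQs is stationary for this lifted chain. Pushing the uniform measure forward through $\type$ would then yield the stationary probability formula.

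The first step is to lift each TASEP transition at adjacent sites $i,i+1$ to a local move on MLQs. For a $20\to 02$ swap I would exchange the bottom-row contents at the two sites, leaving the top row untouched, and verify that the ball-drop algorithm on the new MLQ produces a $02$ at $i,i+1$ (with the rest of the state unchanged). The $10\to 01$ swap is analogous. The $21\to 12$ swap is the most delicate: moving the 1-ball leftward may force a corresponding adjustment of a top-row ball so that the ball-drop algorithm correctly marks the displaced bottom ball as a 1-ball. For each case, I would verify locally that the resulting MLQ has the correct transitioned type.

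The main technical obstacle, as I see it, is showing that the uniform measure on MLQs is stationary for the lifted chain. The cleanest approach is to show the chain is doubly stochastic by explicitly matching each out-move of an MLQ $M$ with an in-move of $M$ via a reverse transition on the MLQ level. The $20$ and $10$ swaps are straightforwardly self-reversing, but the $21 \leftrightarrow 12$ pair requires careful bookkeeping of the accompanying top-row shifts, and I expect this to be the bulk of the work: one must show, case by case on local top-row configurations, that the adjustment accompanying a $21$-move is inverted by the adjustment accompanying the corresponding $12$-move.

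Once uniform stationarity is established, the stationary probability of a TASEP state $X$ equals the number of MLQs with $\type = X$ divided by the total $\binom{n}{k}\binom{n}{\ell}$. To recast this as $o(X)\,|\MLQ(X)|/\binom{n}{k}\binom{n}{\ell}$, I would invoke the $\mathbb{Z}/n$-action on MLQs and on states: by symmetry, MLQs project uniformly across the cyclic orbit of $X$, so the number of labeled MLQs projecting to the particular representative $X$ is $o(X)$ times the number of MLQ equivalence classes in $\MLQ(X)$. Combining this orbit-stabilizer bookkeeping with uniform stationarity of the lifted chain completes the proof.
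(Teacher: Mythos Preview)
The paper does not give its own proof of this statement: Theorem~\ref{thm_mlq} is quoted from \cite{FerrariMartin}. What the paper does instead is prove the equivalent Theorem~\ref{TRAT_thm} directly via the Matrix Ansatz (Theorem~\ref{ansatz0}), showing that $\weight(X)$ satisfies the same recurrences as $\tr(\DAE(X))$ by removing a north- or west-strip at a corner of the tableau. The bijection $\TRAT(X)\leftrightarrow\MLQ(X)$ in Section~\ref{sec_bij_main} then transports this to the MLQ statement. So the paper's route is algebraic (Matrix Ansatz on tableaux) rather than probabilistic (lifted Markov chain), and your proposal is closer to the original Ferrari--Martin argument than to anything in this paper.

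That said, your sketch of the lifted chain contains a concrete error. You write that for a $20\to 02$ swap you would ``exchange the bottom-row contents at the two sites, leaving the top row untouched.'' This does not in general produce an MLQ of the correct type. Take $n=4$, bottom row $(\text{vac},\text{ball},\text{ball},\text{vac})$, top row a single ball at position $2$. The ball drops onto position $2$, giving type $2\,0\,1\,2$. Swapping the bottom row at positions $1,2$ and leaving the top row alone yields bottom $(\text{ball},\text{vac},\text{ball},\text{vac})$ with top ball still at $2$; now the top ball drops to position $3$, and the type is $1\,2\,0\,2$, not the required $0\,2\,1\,2$. The issue is that the top-row ball which previously landed at $i+1$ may, after the bottom swap, overshoot the new ball at $i$. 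Both the original Ferrari--Martin chain and the paper's own MLQ chain (Definition~\ref{def_MLQ_mc}) require a coordinated top-row shift for \emph{every} transition type, not only for $21\to 12$. Until you specify the correct top-row adjustment and verify double stochasticity with it, the ``uniform is stationary'' step does not go through.
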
 

\begin{example}
From Figure \ref{MLQs}, we obtain that for $X=12020$, $f(X)=5$, and so $\Pr(X)=\frac{25}{{5\choose2}{5\choose2}}=\frac{1}{4}$.
\end{example}

\subsection{Toric rhombic alternative tableaux}

In this section we introduce tableaux that we call \emph{toric rhombic alternative tableaux} (TRAT), which give a combinatorial formula for the stationary probabilities of the 2-TASEP. The TRAT are closely related to the \emph{rhombic alternative tableaux} (RAT), which were defined by the author and Viennot in \cite{MV15} as a solution for the 2-ASEP with open boundaries (see Section \ref{sec_TASEP_open}).  

The TRAT are fillings with arrows of a tiling of a closed shape whose boundary is composed of south, southwest, and west edges on a triangular lattice. The tiles are three types of rhombic tiles which we call 20-tiles, 10-tiles, and 21-tiles. Each tile can contain an arrow that points either towards its left vertical edge or the top horizontal edge; we call them \emph{left-arrows} and \emph{up-arrows} correspondingly. The rules of the filling are that any tile that is ``pointed to'' by an arrow must be empty.  We give a precise definition below.


\begin{figure}[!ht]
 \begin{minipage}{0.5\linewidth}
  \centerline{\includegraphics[height=1.5in]{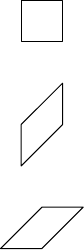}}
 \centering
  \caption{A 20-tile, a 21-tile, and a 10-tile.}
  \label{tiles}
 \end{minipage}
\hfill
\begin{minipage}{0.46\linewidth}
\centerline{\includegraphics[height=1.5in]{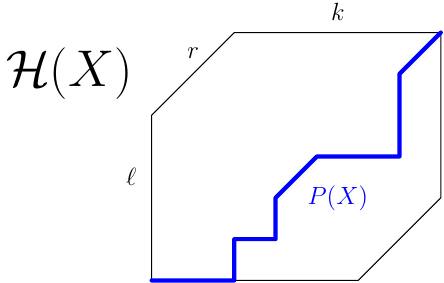}}
\centering
 \caption{Toric diagram $\mathcal{H}(X)$, 
 and the path $P(X)$.}\label{hexagon}
 \end{minipage}
\end{figure}

Let $X=X_1\ldots X_n$ with $X_i \in \{2, 1, 0\}$ be a state of size $(k,r,\ell)$ of the two-species ASEP on a ring. To guarantee the objects we introduce are well-defined, we choose a cyclic shift of $X$ such that $X_1=1$ (the reason for this will become clear later on). Define a lattice path $P(X)$ as follows: reading $X$ from left to right, draw a south edge for a 2, a southwest edge for a 1, and a west edge for a 0. Let $p_1=(\ell+r,k+r)$ and $p_4=(0,0)$ be the coordinates of the endpoints of $P(X)$. Define $p_2=(\ell+r,r)$, $p_3=(\ell,0)$, $p_5=(0,k)$, $p_6=(r,k+r)$ and let $\{p_1,\ldots,p_6\}$ be the endpoints of the diagram $\mathcal{H}$ of size $(k,r,\ell)$ that contains $P(X)$. See Figure \ref{hexagon}.

\begin{defn}
We call $\mathcal{H}(X)$ the diagram $\mathcal{H}$ together with path $P(X)$. We say $\mathcal{H}(X)$ is a \emph{toric diagram} of \emph{type} $X$. 
\end{defn}

\begin{remark}
It is certainly possible to define a toric diagram of type $X$ for $X_1=0$ or $X_1=2$ by using a different lattice path for the boundary of $\mathcal{H}(X)$. However, we have found that at this stage it suffices to limit ourselves to the definition when $X_1=1$ to simplify our presentation, with one caveat: we must take extra care in the case where $X=1Y2$ or $X=10Y$. In many of our proofs, we will give extra attention to those special cases.
\end{remark}

\begin{defn}
A \emph{20-tile} is a rhombus with south and west edges. A \emph{10-tile} is a rhombus with west and southwest edges. A \emph{21-tile} is a rhombus with south and southwest edges. See Figure \ref{tiles}. 
\end{defn}

Now choose a tiling $\mathcal{T}$ with the 20-tiles, 21-tiles, and 10-tiles on the region of $\mathcal{H}(X)$ northwest of $P(X)$ and the region of $\mathcal{H}(X)$ southwest of $P$. For the remainder of the definition of the tableaux, this tiling is fixed. We call the tiled $\mathcal{H}(X)$ a \emph{tiled toric diagram}. Figure \ref{all_strips} shows an example of a toric diagram $\mathcal{H}(X)$ of type $X=120201210$. 

\begin{defn}
A \emph{north-strip} is a connected strip composed of adjacent 20- and 10-tiles. A \emph{west-strip} is a connected strip composed of adjacent 20- and 21-tiles. The 20-tile and the 21-tile can contain a \emph{left-arrow}, which is an arrow pointing to the left vertical edge of the tile, and is also pointing to every tile to its left in its west-strip. The 20-tile and the 10-tile can contain an \emph{up-arrow}, which is an arrow pointing to the top horizontal edge of the tile, and is also pointing to every tile above it in its north strip. See Figure \ref{all_strips}.
\end{defn}

\begin{figure}[h]
  \centerline{\includegraphics[width=\linewidth]{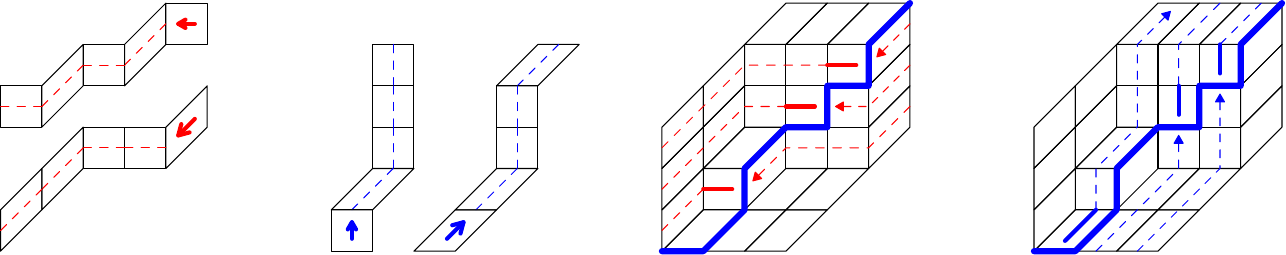}}
 \centering
  \caption{\emph{Left}: west-strips containing a left-arrow which is pointing at the tiles to its left, as well as north-strips containing an up-arrow which is pointing at the tiles above it. \emph{Middle}: a tiled toric diagram with all its west-strips highlighted. \emph{Right}: a tiled toric diagram with all its north-strips highlighted. All strips begin at the tile to the northwest and adjacent to $P(X)$ and terminate at the tile to the southeast and adjacent to $P(X)$.}\label{all_strips}
\noindent
\end{figure}

Identify the horizontal edges on the upper boundary of $\mathcal{H}(X)$ with the horizontal edges belonging to the same north-strip on the lower boundary. Similarly, identify the vertical edges on the left boundary of $\mathcal{H}(X)$ with the corresponding vertical edges belonging to the same west-strip on the right boundary. This makes $\mathcal{H}(X)$ a torus with one boundary component. If the edges of two tiles are identified, we say the tiles are adjacent. Following these identifications, north-strips and west-strips wrap around the toric diagram. Each north-strip starts at the tile directly north $P(X)$ and ends at the tile directly south of $P(X)$. Similarly, each west-strip starts at the tile directly west of $P(X)$ and ends at the tile directly east of $P(X)$, as in Figure \ref{all_strips}.

\begin{defn}
A tile is \emph{pointed at} by an arrow if it is in the same west-strip to the left of a left-arrow or if it is in the same north-strip above an up-arrow. Conversely, a tile is \emph{free} if it is not pointed at by any arrow.
\end{defn}

For consistency, we define a canonical tiling $\mathcal{T}_X$ of a toric diagram, which will be the tiling we use in most cases.

\begin{defn}\label{Tx_def}
An \emph{$X$-strip} is a north-strip obtained by reading $X$ from left to right and placing a 20-tile for a 2 and a 10-tile for a 1\ from top to bottom. Let $\mathcal{H}(X)$ be a toric diagram of size $(k,r,\ell)$. The tiling $\mathcal{T}_X$ is defined to be the top-justified placement of $\ell$ adjacent $X$-strips with $r\ell$ 21-tiles filling in the remaining space of $\mathcal{H}(X)$. For an example, see Figure \ref{Tx}.
\end{defn}

Note that we order the $X$-strips from right to left, which corresponds to locations of the 0's in $X$ from left to right.

\begin{figure}[!ht]
  \centerline{\includegraphics[height=1.2in]{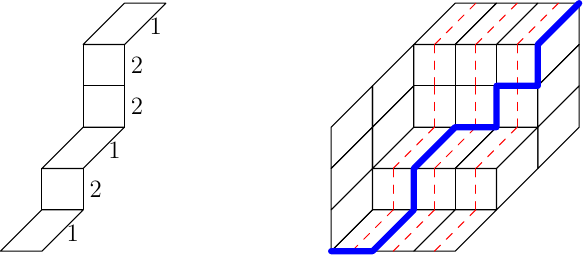}}
\centering
 \caption{For $X=120201210$, on the left is shown an $X$-strip, and on the right the tiling $\mathcal{T}_X$ on $\mathcal{H}(X)$ with path $P(X)$.}\label{Tx}
 \end{figure}

\begin{lemma}
The tiling $\mathcal{T}_X$ is a valid tiling of $\mathcal{H}(X)$ with path $P(X)$.
\end{lemma}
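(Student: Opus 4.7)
The plan is to verify three facts in order: that the $\ell$ $X$-strips placed top-justified fit side-by-side inside $\mathcal{H}(X)$, that the remaining space can be tiled by 21-tiles, and that the path $P(X)$ is a union of edges of the resulting tiling.

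First, I would analyze a single $X$-strip. Since a 20-tile has vertical left and right edges while a 10-tile has parallel southwest left and right edges, a strip composed of $k$ 20-tiles and $r$ 10-tiles (reading $X$ top-to-bottom and ignoring $0$'s) has height $k+r$ and an overall leftward shift of $r$. Placing $\ell$ such strips adjacent with the rightmost one's top-right corner at $p_1=(\ell+r,k+r)$ and all tops aligned at height $k+r$, the leftmost strip's top-left lands at $p_6=(r,k+r)$, its bottom-left at $p_4=(0,0)$, and the rightmost's bottom-right at $p_3=(\ell,0)$. Hence the union of the strips exactly covers the band whose top boundary is the segment from $p_6$ to $p_1$ and whose bottom is the segment from $p_4$ to $p_3$, both along $\partial\mathcal{H}(X)$.

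The leftover region splits into a left piece (bounded by the leftmost strip's left boundary and the polyline $p_6\to p_5\to p_4$) and a right piece (bounded by the rightmost strip's right boundary and the polyline $p_1\to p_2\to p_3$). In each piece, the two boundary paths share their endpoints and have the same multiset of edges ($k$ south and $r$ southwest), differing only in their order: the hexagon side is SW-then-S on the left and S-then-SW on the right, while the strip side interleaves them according to $X$. A planar region bounded by such a pair of monotone lattice paths with only S and SW edges can be tiled by 21-tiles (the only rhombus type whose edges are all S or SW), via a standard ``bubble-sort'' argument that swaps adjacent out-of-order pairs one at a time, each swap accounting for one 21-tile. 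A shoelace computation on $\mathcal{H}(X)$ gives total area $k\ell+kr+r\ell$, so subtracting the strip area $\ell(k+r)$ leaves $kr$ 21-tiles in total (correcting the apparent typo ``$r\ell$'' in the definition).

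Finally, I would verify that $P(X)$ consists of edges of $\mathcal{T}_X$ by tracing it from $p_1$ while reading $X_1,\ldots,X_n$. I would maintain the invariant that after reading a prefix $X_1\ldots X_{i-1}$, the endpoint of $P(X)$ sits at a vertex of $\mathcal{T}_X$ on the right boundary of the strip corresponding to the next $0$ of $X$ (or on the bottom of the leftmost strip once all $0$'s have been consumed). Under this invariant, $X_i=2$ steps along the vertical right edge of a 20-tile in that strip, $X_i=1$ steps along the southwest right edge of a 10-tile, and $X_i=0$ steps along the horizontal tile edge at the current height, transporting $P(X)$ to the right boundary of the next strip to the left. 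Since adjacent strips have identical tile sequences, the heights and horizontal edges line up so the W-steps always lie on genuine tile edges.

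The main obstacle will be the bookkeeping in the third step together with the boundary cases flagged in the remark preceding the definition: if $X=1Y2$, the final south-edge of $P(X)$ runs along the $p_5p_4$ portion of $\partial\mathcal{H}(X)$ rather than strictly interior, and if $X=10Y$, the first west-edge abuts the top-left corner $p_6$. Both cases are dispatched by checking the invariant directly at the boundary vertices. Once these three items are verified, $\mathcal{T}_X$ is a valid tiling of $\mathcal{H}(X)$ respecting $P(X)$.
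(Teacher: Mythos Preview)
Your proof is correct and shares its core idea with the paper's: both trace $P(X)$ from $p_1$ along the right boundaries of successive $X$-strips, stepping west to the next strip at each 0 in $X$. The paper's argument is much terser, however: it only carries out your step~3 (showing the edges of $P(X)$ lie on tile edges) and takes your steps~1 and~2 (that the $\ell$ strips actually fit inside $\mathcal{H}(X)$ and that the leftover region is tileable by 21-tiles) as visually evident from the picture. Your explicit bubble-sort argument for the 21-tile regions and your area computation are additional rigor the paper omits; you also correctly catch that the number of 21-tiles should be $kr$ rather than the $r\ell$ stated in the definition. The boundary cases you flag for $X=1Y2$ and $X=10Y$ are likewise not treated separately in the paper's proof.
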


The lemma is easily verified with a picture, such as in Figure \ref{Tx}, but we provide the proof below.

\begin{proof}
We want to show that all the edges of $P(X)$ coincide with edges of $\mathcal{T}_X$. We obtain $P(X)$ from $\mathcal{T}_X$ as follows. 

Let $x_0=0$ and let $x_1,\ldots,x_{\ell}$ be the locations of the $\ell$ 0's in $X$ from left to right. Starting with $i=1$, from the northeast corner of $\mathcal{H}(X)$, draw a path $P$ by following the east boundary of the $i$'th $X$-strip for $x_i-x_{i-1}-1$ steps, and take a step west for the $x_i$'th step to switch to the $i+1$'st $X$-strip, up to $i=\ell$. After the $x_{\ell}$'th step, follow the west boundary of the $\ell$'th $X$-strip until the southwest corner of $\mathcal{H}(X)$ is reached.

Since the $X$-strip is obtained simply from excising the 0's from $X$, $P=P(X)$ by our construction.
\end{proof}

\begin{defn}
A TRAT of type $X$ is a filling of the tiles of a tiled toric diagram $\mathcal{H}(X)$ with left-arrows and up-arrows according to the following rules:
\begin{enumerate}
\item[i.] A tile pointed to by an arrow in the same strip must be empty.
\item[ii.] An empty tile must be pointed to by some up-arrow or some left-arrow.
\end{enumerate}
\end{defn}

Figure \ref{fillings} shows an example of all possible fillings of $\mathcal{H}(X)$ for $X=120201210$.

\begin{figure}[!ht]
  \centerline{\includegraphics[height=1in]{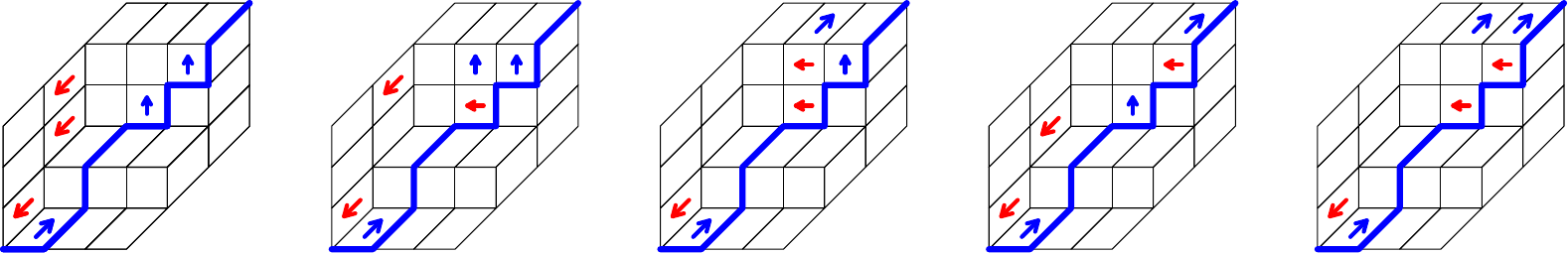}}
\centering
 \caption{The set $\TRAT(X)$ for $X=120201210$.}\label{fillings}
 \end{figure}

\begin{defn}
We define the \emph{weight} of $X$ to be the number of possible fillings of $\mathcal{H}(X)$ with tiling $\mathcal{T}$ with up-arrows and left-arrows of this tiling, and we denote it by $\wt_{\mathcal{T}}(X)$.
\end{defn}

The following lemma addresses equivalence of tilings. It is well known that any two tilings can be obtained from one another via some series of flips, where a flip is a switch of configurations in Figure \ref{flip_bij}. A \emph{filling-preserving flip} is a weight-preserving map from the filling of a tiling $\mathcal{T}$ to a filling of a tiling $\mathcal{T}'$, where $\mathcal{T}$ and $\mathcal{T}'$ differ by a single flip, with all other tiles and their contents identical in the two tilings. Figure \ref{flip_bij} shows the four possible cases of a filling-preserving flip. A full proof of this property of rhombic tableaux is given in Proposition 2.8 of \cite{MV15}.

\begin{figure}[!ht]
  \centerline{\includegraphics[height=1in]{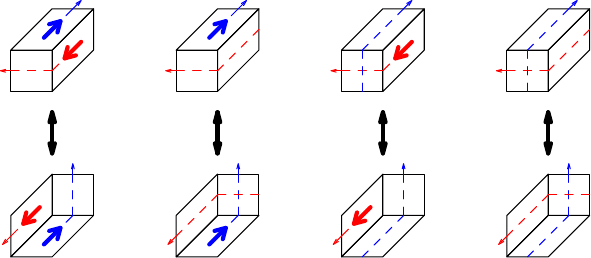}}
\centering
 \caption{A \emph{flip} is the switch from one hexagonal configuration to another in a tiling. In this figure we see the four cases of possible fillings of a hexagonal configuration of tiles in a TRAT. The dashed blue lines through the west-strips (resp.~red lines through the north strips) represent the presence of left-arrows (resp.~up-arrows) in those strips.}
 \label{flip_bij}
\end{figure}

\begin{lemma}\label{flip_lemma}
Let $\mathcal{T}$ and $\mathcal{T}'$ be two different tilings on $\mathcal{H}(X)$. Then
\[ \wt_{\mathcal{T}}(X)=\wt_{\mathcal{T}'}(X).
\]
\end{lemma}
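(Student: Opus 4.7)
The plan is to reduce the global statement to a purely local one via the standard flip connectivity of rhombus tilings. By the flip-connectivity theorem for rhombus tilings of a simply (or toroidally) connected region, any two tilings $\mathcal{T}$ and $\mathcal{T}'$ of $\mathcal{H}(X)$ can be connected by a finite sequence $\mathcal{T}=\mathcal{T}_0,\mathcal{T}_1,\ldots,\mathcal{T}_m=\mathcal{T}'$ in which consecutive tilings differ by a single flip of a hexagonal region composed of one 20-tile, one 10-tile, and one 21-tile. So it suffices to prove $\wt_{\mathcal{T}_i}(X)=\wt_{\mathcal{T}_{i+1}}(X)$ for each $i$; then an immediate induction on $m$ gives the lemma.

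For a single flip, I would exhibit an explicit filling-preserving bijection between the fillings of $\mathcal{T}_i$ and those of $\mathcal{T}_{i+1}$ that leaves every tile outside the hexagonal region and its contents unchanged. The only data that can vary inside the hexagon is (a) the arrows (if any) in its three rhombi and (b) whether the hexagon is traversed by a left-arrow coming along its west-strip or an up-arrow coming along its north-strip from a neighboring tile. The four cases depicted in Figure \ref{flip_bij} (indexed by whether the west-strip and north-strip through the hexagon each carry an arrow passing through or not) are complete. In each case I would read off the unique valid filling of the flipped hexagon that preserves the incoming and outgoing ``arrow signals'' on each of the two relevant strips, using the defining rules: every empty tile must be pointed at by some arrow, and no arrow's target tile may itself be filled.

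The key local check is that after the flip, conditions (i) and (ii) of the TRAT definition remain satisfied both inside the hexagon and in every tile downstream along the two affected strips. Inside the hexagon this is verified by inspecting the four cases in Figure \ref{flip_bij}. Downstream it follows from the fact that the bijection preserves, for each of the west-strip and the north-strip through the hexagon, whether or not that strip carries an arrow at the exit edges of the hexagon; hence the ``pointed at'' status of all tiles further along the strips is unchanged, so their fillings remain valid without modification. Since the hexagon's filling is uniquely determined by the strip signals and the contents of the remainder are identical in both tilings, the map is a bijection, giving $\wt_{\mathcal{T}_i}(X)=\wt_{\mathcal{T}_{i+1}}(X)$.

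The main obstacle I anticipate is the bookkeeping in case (b): ensuring that in each of the four cases, the arrow pattern on the flipped hexagon exists, is unique given the boundary signals, and respects the TRAT rules. This is essentially what is carried out for the open-boundary rhombic alternative tableaux in Proposition 2.8 of \cite{omxgv}, and the toric setting introduces no new local difficulties because the identifications on the boundary of $\mathcal{H}(X)$ only re-route strips globally without changing the local flip analysis. Once the four-case check is in hand, flip-connectivity and induction finish the proof.
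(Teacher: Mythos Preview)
Your proposal is correct and follows essentially the same approach as the paper: reduce to single flips via flip-connectivity, then exhibit the filling-preserving bijection by checking the four local cases in Figure~\ref{flip_bij}, with the details deferred to Proposition~2.8 of \cite{omxgv}. In fact, you supply more explanatory detail than the paper itself, which merely states the idea and cites the figure and the external reference.
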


As a consequence of the Lemma, we are able to define the weight of a state $X$.
\begin{defn}
Choose any tiling $\mathcal{T}$ on $\mathcal{H}(X)$ of size $(k,r,\ell)$. Define 
\[\weight(X) = \wt_{\mathcal{T}}(X).\]
\end{defn}

\begin{defn}
Let $X\in\TASEP(k,r,\ell)$. We denote by $o(X)$ the \emph{order} of $X$, which is the number of elements in the class of cyclic shifts of $X$.
\end{defn}

Our main result is the following.

\begin{thm}\label{TRAT_thm}
Let $X$ be a state of the two-species TASEP on a ring of size $(k,r,\ell)$ with $n=k+r+\ell$. Then 
\[
\Pr(X) =\frac{o(X)}{{n \choose k}{n \choose \ell}} \weight(X).
\]
\end{thm}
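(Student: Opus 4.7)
The plan is to reduce the theorem to a purely enumerative identity and then prove that identity by a bijection with multiline queues. Theorem~\ref{thm_mlq} of Ferrari and Martin already gives $\Pr(X)=\frac{o(X)}{\binom{n}{k}\binom{n}{\ell}}|\MLQ(X)|$, so it suffices to show that $\weight(X)=|\MLQ(X)|$. By Lemma~\ref{flip_lemma} the left-hand side does not depend on the chosen tiling, so I would work with the canonical tiling $\mathcal{T}_X$ of Definition~\ref{Tx_def}, whose $\ell$ $X$-strips and $r\ell$ extra 21-tiles are adapted to the structure of $X$.

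To match $\MLQ(X)$ with the fillings of $(\mathcal{H}(X),\mathcal{T}_X)$, I would exploit the natural correspondence between the two rows of an MLQ and the two arrow types: up-arrows (which live in 20- and 10-tiles, i.e.\ along $X$-strips) should encode the locations of the top-row balls, while left-arrows (which live in 20- and 21-tiles, i.e.\ along west-strips) should encode the locations of the bottom-row balls. Since the $\ell$ $X$-strips of $\mathcal{T}_X$ are indexed by the zeros of $X$ and the rows within each strip are indexed by the non-zero letters of $X$, the depth of each up-arrow within its strip would record which top-row ball drops into which zero position under the Ferrari--Martin drop algorithm; analogously, the depth of each left-arrow would record the bottom-row structure. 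I would define $\phi:\MLQ(X)\to\{\text{fillings}\}$ by this recipe and show it is a bijection.

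The main obstacle is verifying that $\phi$ is well-defined and weight-preserving. In the forward direction, given an MLQ one must check that the induced arrow placement never points at another filled tile in the same strip (this should reduce to the greediness of the drop algorithm) and that every empty tile is pointed at by some arrow (a counting argument comparing degrees of freedom on both sides). In the reverse direction, given a valid filling one must show that the extracted MLQ indeed has type $X$; this hinges on the way $P(X)$ separates the $X$-strips from the 21-tile region and on how arrows wrap around the torus. Extra care will be needed at the seam and in the edge cases $X=1Y2$ and $X=10Y$ flagged in the remark after the definition of $\mathcal{H}(X)$, where the canonical boundary picture degenerates. Once $\weight(X)=|\MLQ(X)|$ is established, substitution into Theorem~\ref{thm_mlq} immediately yields the theorem.
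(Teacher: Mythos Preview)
Your high-level strategy---reduce to $\weight(X)=|\MLQ(X)|$ and establish that by bijection---is one of the two proofs the paper gives; the paper even says so explicitly just before the proof. However, the paper's \emph{primary} proof (the one appearing immediately after the theorem) is not bijective: it is an induction on $|X|$ showing that $\weight(X)$ satisfies the same recurrences as $f(X)=\tr(\DAE(X))$ from the Matrix Ansatz. The inductive step splits into the three cases $X=Y'20Y''$, $X=Y'21Y''$, $X=1Y'2$, and in each case one removes the west- or north-strip through the corner tile to reduce to a smaller tableau. This is shorter and avoids having to prove the bijection is well-defined.

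The specific bijection you sketch is not the one that works, and as written it cannot work. You propose that left-arrows ``encode the locations of the bottom-row balls,'' but the bottom row of an MLQ of type $X$ is completely determined by $X$ (balls sit exactly at the $0$'s and $1$'s), so there is nothing to encode there; all the freedom in $\MLQ(X)$ lies in the top row. In the paper's bijection (Section~\ref{sec_bij}) the relevant statistic is, for each of the $\ell$ north-strips (indexed by the $0$-positions $x_1<\cdots<x_\ell$), the number $a_i$ of \emph{left-arrows} in that north-strip. The tuple $(a_1,\ldots,a_\ell)$ is shown to be an ``$X$-consistent list,'' and Lemma~\ref{uniqueness} says such a list determines a unique MLQ of type $X$ via ball-lifts. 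Your remark about the ``depth of each up-arrow'' is in the right spirit (the up-arrow sits in the first free tile above the $a_i$ left-arrows, so its height is essentially $a_i$), but the up-arrows are not themselves the carriers of information in the way you describe, and your parallel statement about left-arrows and the bottom row is simply wrong. If you want to carry the bijective argument through, the correct invariant to track is the left-arrow count per north-strip, together with the $X$-consistency condition of Lemma~\ref{MLQ_technical}.
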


\begin{example}
From Figure \ref{MLQs}, we obtain that for $X=120201210$, $f(X)=5$, and so $\Pr(X)=\frac{9}{{9\choose3}{9\choose3}}\cdot 5 = \frac{5}{748}$ since there is a total of ${9\choose3}{9\choose3}$ MLQs of size $(3,3,3)$ and $o(X)=9$. On the other hand, for $Y=201201201$, $f(Y)=8$, $o(Y)=3$, and so $\Pr(Y)=\frac{1}{294}$.
\end{example}

\begin{remark}
Note that in most cases, $o(X)=n$, unless $k, \ell$, and $r$ have a common factor. When $\gcd(k,\ell,r)=1$, we can write 
\[\Pr(X) =\frac{n}{{n \choose k}{n \choose \ell}} \weight(X).
\]
\end{remark}

We will first give a canonical Matrix Ansatz proof below, and in Section \ref{sec_bij}, we will show the TRAT is in bijection with the MLQs, from which our theorem follows due to Theorem \ref{thm_mlq}. 

We prove Theorem \ref{TRAT_thm} by showing by induction on $|X|$ that $\weight(X)$ satisfies the same recurrences as the Matrix Ansatz of Theorem \ref{ansatz0}.

\begin{proof}
When $X$ contains zero type 1 particles, an exceptional case occurs, since the trace of the matrix product of matrices $D$ and $E$ is no longer finite, so we cannot use the standard Matrix Ansatz proof. For $X$ of size $(k,0, \ell)$, $\mathcal{H}(X)$ is a $k \times \ell$ rectangle, and $\weight(X)={k+\ell \choose k}$. (This can be derived with a standard lattice path bijection in the flavor of the Catalan tableaux that appear in \cite{Vie07}, which we will not expand upon here.) It is also easy to check that the TASEP on a ring with fewer than two species of particles has uniform stationary distribution (each state has the same number of outgoing transitions as it has incoming transitions, so detailed balance holds). There is a total of ${k+\ell \choose k}^2$ MLQs and ${k+\ell \choose k}$ total states counting all cyclic shifts, and so $\Pr(X)=\frac{o(X)}{{k+\ell \choose k}}$ since $X$ is counted $o(X)$ times. Consequently, Theorem \ref{TRAT_thm} trivially holds in this case.

Let $f(X)=\tr(\DAE(X))$, as defined in Theorem \ref{ansatz0}. When $X$ has at least one type 1 particle, we will show that $\weight(X)=f(X)$. Our proof is by induction on $|X|$.

For the base cases, when $X$ has size $(k,r,0)$ or $(0,r,\ell)$, $\mathcal{H}(X)$ consists of only 21-tiles or 10-tiles respectively, and so in each case, there is a unique filling of $\mathcal{H(X)}$. Thus since $DA=AE=A$, we trivially obtain $1=\weight(X)=f(X)$. Now, let $n$ be such that for any $|W|<n$, it holds that $\weight(W)=f(W)$. 

Let $X$ have size $(k,r,\ell)$ with $k,r,\ell>0$ and $k+r+\ell=n$. Assume $X=1Y$ for some $Y$. One of the following must occur: 

\textbf{Case 1.} $X=Y'20Y''$,

\textbf{Case 2.} $X=Y'21Y''$, or

\textbf{Case 3.} $X=1Y'2$.

We fix the tiling $\mathcal{T}_X$ on $\mathcal{H}(X)$ and consider each of these cases.

\begin{figure}[!ht]
  \centerline{\includegraphics[height=2.5in]{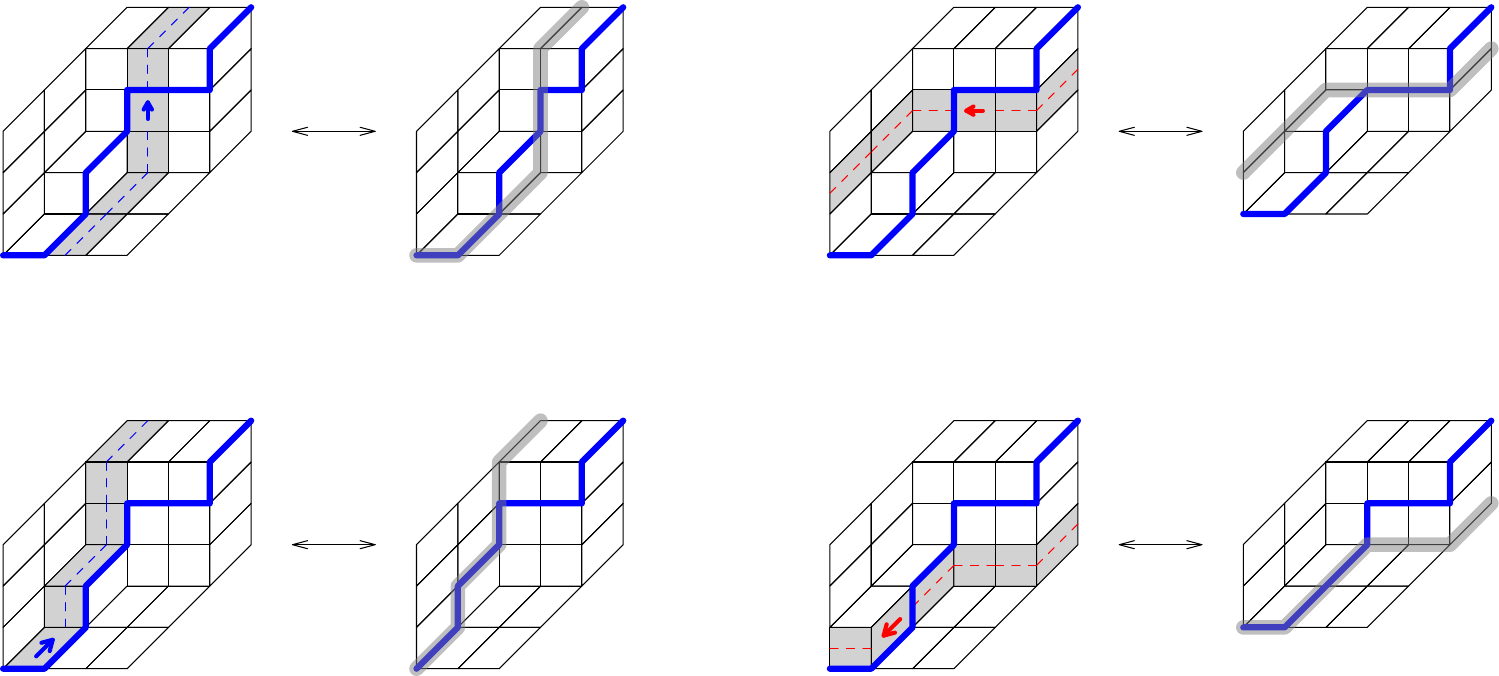}}
\centering
 \caption{\emph{Top left}: 20-tile containing an up-arrow. \emph{Top right}: 20-tile containing a left-arrow. \emph{Bottom left}: 10-tile containing an up-arrow. \emph{Bottom right}: 21-tile containing a left-arrow. For each example, the fillings of the smaller tableau are in bijection with fillings of the tableau with the highlighted strip removed (the pink path in the smaller tableau indicates the location of the removed strip).}\label{MA_recurrences}
 \end{figure}

\emph{Case 1: $X=Y'20Y''$.} The 20-tile adjacent to the 2, 0 pair of edges of $P(X)$ necessarily contains either a left-arrow or an up-arrow. In the left-arrow case, the remaining tiles of the west-strip $\textbf{w}$ originating at the 2-edge must be empty. Then the fillings of $\mathcal{H}(X)$ are in bijection with the fillings of $\mathcal{H}(X)\backslash\textbf{w}$ which is a tiled rhombic diagram of shape $Y'0Y''$. In the up-arrow case, the remaining tiles of the north-strip $\textbf{n}$ originating at the 0-edge must be empty. Then the fillings of $\mathcal{H}(X)$ are in bijection with the fillings of $\mathcal{H}(X)\backslash\textbf{n}$ which is a tiled rhombic diagram of shape $Y'2Y''$. 

Figure \ref{MA_recurrences} illustrates both of these cases.

Consequently,
\[
\weight(Y'20Y'')=\weight(Y'2Y'')+\weight(Y'0Y'') = f(Y'2Y'')+f(Y'0Y'') = f(Y'20Y'')
\]
by the inductive hypothesis since $|Y'0Y''|<n$ and $|Y'2Y''|<n$, and hence we obtain the desired result.

\emph{Case 2: $X=Y'21Y''$.} The 21-tile adjacent to the 2, 1 pair of edges of $P(X)$ necessarily contains a left-arrow. Thus the remaining tiles of the west-strip $\textbf{w}$ originating at the 2-edge must be empty. Hence the fillings of $\mathcal{H}(X)$ are in bijection with the fillings of $\mathcal{H}(X)\backslash\textbf{w}$ which is a tiled rhombic diagram of shape $Y'1Y''$. Consequently,
\[
\weight(Y'21Y'') =  \weight(Y'1Y'')=f(Y'1Y'')=f(Y'21Y'')
\]
by the inductive hypothesis since $|Y'1Y''|<n$. Thus $\weight(X)=f(X)$, as desired.

\emph{Case 3: $X=1Y'2$.} This case is interesting since a toric diagram of type $X$ by construction has no 21-tile adjacent to the 2, 1 pair of edges at the ends of $P(X)$, so we cannot perform the simple recurrence of the first two cases. By our convention, $X$ is required to start with a 1, but fortunately this case is quite simple. We consider the bottom-most west-strip $\textbf{w}$ corresponding to the 2-edge, a in Figure \ref{MA_rec_exception}. The rightmost tile of $\textbf{w}$ is a 21-tile, and hence it must contain a left-arrow with its remaining tiles empty; this means $\textbf{w}$ is completely independent from the rest of the tableau. It is immediate that the fillings of $\mathcal{H}(X)$ are in bijection with the fillings of $\mathcal{H}(X)\backslash\textbf{w}$, which is a tiled rhombic diagram of shape $1Y'$. Consequently,
\[
\weight(1Y'2) =  \weight(1Y')=f(1Y')=f(1Y'2)
\]
by the inductive hypothesis since $|1Y'|<n$. Thus $\weight(X)=f(X)$ in all three cases, and our proof is complete.
\end{proof}

\begin{figure}[!ht]
  \centerline{\includegraphics[height=1.3in]{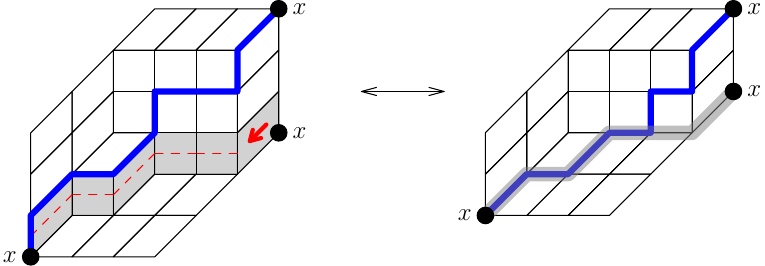}}
\centering
 \caption{A tableau corresponding to $X=1Y2$, where we consider the transition $21 \rightarrow 12$ of the boundary edges of $P(X)$. All red points labeled $x$ represent the same point. The fillings of the smaller tableaux are in bijection with fillings of tableaux with the highlighted strip removed (the pink path in the smaller tableaux indicates the location of the removed strip).}\label{MA_rec_exception}
 \end{figure}

Therefore, the TRAT indeed provide combinatorial formulae for the probabilities of the two-species TASEP on a ring.

\subsection{Determinantal formula for probabilities of the two-species TASEP on a ring}

We use the results of \cite{Man15} to compute $\weight(X)$ using a determinantal formula that arises from the non-crossing paths Lingstr\"om-Gessel-Viennot Lemma.

Call an interval of of $X$ consisting of 0 and 2 particles a \emph{0,2-interval}. Partition $X$ into $r$ maximal 0,2-intervals $X^1,\ldots, X^r$.

\begin{defn}
Let $X \in \{0,2\}^{j+m}$ and let there be $j$ 2's at locations $a_1,\ldots,a_j$. Define $\lambda(X)$ to be the partition associated to the Young diagram whose southeast boundary coincides with $P(X)$. Namely,
\[
\lambda(X)=(m+1-a_1,m+2-a_2,\ldots,m+j-a_j).
\]
\end{defn}

For an example, see Figure \ref{lambda}.

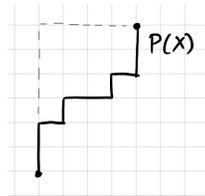
\begin{figure}[!ht]
 \begin{tikzpicture}[scale=0.5]
\draw[help lines,step=1] (4,0) grid (0,-6);
 \draw[ultra thick,blue] (4,0)--(4,-2)--(3,-2)--(3,-3)--(1,-3)--(1,-4)--(0,-4)--(0,-6);
\draw (5.5,-1) node {$P(X)$};
 \end{tikzpicture}
\centering
 \caption{The Young diagram associated to 0,2-word $X=2202002022$. Here $(a_1,\ldots,a_6)=(1,2,4,7,9,10)$, and $\lambda(X)=(4,4,3,1,0,0)$.}\label{lambda}
 \end{figure}
 
 The following is derived in \cite{Man15,Man16}. For a partition $\lambda$, define 
 \[
A_{\lambda} = \left( {\lambda_j+1 \choose j-i+1} \right)_{(i,j)}
\]
 
 \begin{thm}
 Let $X$ be a state of the two-species ASEP on a ring. Partition $X$ into 0,2-intervals $X_1,\ldots, X_{r+1}$. Then
 \[
 \weight(X) = \prod_{i=1}^{r+1} \det A_{\lambda(X_i)}.
 \]
 \end{thm}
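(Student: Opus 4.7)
The plan is to use Lemma \ref{flip_lemma} to pick a tiling of $\mathcal{H}(X)$ that exposes a natural decomposition of the toric diagram into $r$ independent pieces, one per maximal 0,2-interval of $X$, and then invoke the determinantal formula of \cite{multicatalan} on each piece.

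First I would choose a tiling $\mathcal T$ on $\mathcal H(X)$ in which, for every 1-edge of $P(X)$, there is a 21-tile lying on the NW side of $P(X)$ whose southwest edge is that 1-edge (and symmetrically a 10-tile lying SE of $P(X)$). Such tilings always exist — one can start from the canonical tiling $\mathcal T_X$ and perform local flips near each southwest edge as needed. By Lemma \ref{flip_lemma}, $\wt_{\mathcal T}(X) = \weight(X)$.

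Next I would argue — exactly as in Case 2 (and Case 3) of the proof of Theorem \ref{TRAT_thm} — that each such 21-tile must contain a left-arrow, which in turn forces every other tile in its west-strip to be empty. The key structural observation is that these $r$ completely determined west-strips, together with the edges of $P(X)$, cut the tiled diagram into exactly $r$ connected regions, one bordered by each maximal 0,2-interval $X_i$. The boundary of the $i$-th region consists of the segment of $P(X)$ corresponding to $X_i$ (south edges for 2's, west edges for 0's) together with segments of the two flanking separating strips; the latter contribute only south and west boundary edges. Consequently the $i$-th region is an ordinary (non-toric) tiled rhombic diagram whose southeast boundary is precisely the Young-diagram staircase determined by $\lambda(X_i)$. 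Since the separating strips contribute no degree of freedom (a single forced left-arrow each), the count $\wt_{\mathcal T}(X)$ factors as the product over $i$ of the number of valid fillings of the region of shape $\lambda(X_i)$.

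Finally, I would identify each piece with a rhombic alternative tableau of straight Young-diagram shape $\lambda(X_i)$, and invoke the main result of \cite{multicatalan}: the number of such fillings is obtained via the Lindström--Gessel--Viennot lemma applied to the system of non-crossing lattice paths encoding the up- and left-arrow configurations, yielding $\det A_{\lambda(X_i)}$. Multiplying over $i$ gives the claimed formula.

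The main obstacle I expect is the decoupling step: one must verify carefully that the chosen separating 21-tiles together with their forced-empty west-strips really partition the toric tableau into $r$ pieces of genuine Young-diagram shape — in particular, that no west-strip or north-strip of the remaining tiling wraps around the torus to cross a separating strip, and that the toric identifications on the boundary of $\mathcal H(X)$ are all resolved once the cut is made. The exceptional cases $X = 1Y'2$ and $X = 10Y'$ arising from the cyclic-shift convention $X_1 = 1$ will require a small separate argument analogous to Case 3 in the proof of Theorem \ref{TRAT_thm}.
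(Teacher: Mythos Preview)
The paper does not actually prove this theorem: it simply states that the formula ``is derived in \cite{multicatalan}'' and records the result without argument. Your proposal is therefore not competing with a paper proof but rather sketching the derivation the paper omits, and the outline --- use tiling invariance to place a forced tile at each 1-edge, observe that the resulting forced strips decouple the toric tableau into independent Young-diagram-shaped pieces indexed by the maximal 0,2-intervals, and then invoke the Lindstr\"om--Gessel--Viennot determinant from \cite{multicatalan} on each piece --- is exactly the intended route.

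One caution on the details: your plan to place a 21-tile northwest of \emph{every} 1-edge is not always realizable (a 1-edge preceded by a 0 in $P(X)$ naturally abuts a 10-tile, not a 21-tile, on the northwest side), and in any case it is not the cleanest mechanism for the decoupling. A safer argument uses the canonical tiling $\mathcal T_X$ directly and appeals to the observation already made in Section~\ref{sec_bij}: in $\mathcal T_X$ every north-strip that meets a 10-tile must carry its up-arrow at or below that 10-tile, so no arrow can sit above the first 10-tile in any north-strip. This forces each north-strip's filling to depend only on the portion of the strip between $P(X)$ and the first 10-tile, which is precisely the segment indexed by the 0,2-interval containing the corresponding 0. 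The factorization then follows without any delicate cutting of west-strips on the torus, and the exceptional cases you flag ($X=1Y'2$, $X=10Y'$) become automatic. Also note the indexing: on the ring there are $r$ (not $r+1$) maximal 0,2-intervals; the paper's own text is inconsistent on this point.
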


\section{Bijections}\label{sec_bij_main}

In this section, we describe two different bijections between MLQs and TRAT; one weight preserving and one not. The first bijection relies on a particular order for the ball drop algorithm on the MLQs which we discuss in the following subsection. In section \ref{sec_weights}, this weight-preserving bijection will permit us to define weighted multiline queues that give a combinatorial solution for the inhomogeneous TASEP. For the second bijection, the order of ball drops does not matter; we are still able to define weights on the MLQs, but the bijection with TRAT is no longer weight-preserving.

\subsection{Refined multiline queue definition}\label{mlq_def}

Each multiline queue corresponds to a state of the circular ASEP, determined by the (order independent) \emph{ball dropping algorithm} given in Section \ref{sec_intro}. We label the bottom row balls as 0-balls (balls occupied by a top row ball) and 1-balls (unoccupied balls).

To make our bijection well-defined, we first cyclically shift the MLQ to have a 1-ball at the left-most bottom row location. This implies no top row ball will wrap around the MLQ when it drops. Let the bottom row 0-balls be in locations $(x_1,\ldots,x_{\ell})$. Now, drop the top row balls from right to left. With each drop, the ball occupies the first unoccupied bottom row ball weakly to its right, while marking unmarked bottom row vacancies. 

Let $w_i$ be the number of unmarked vacancies that were marked by the dropping ball that occupied the 0-ball at location $x_i$. Set $w_i$ to be the \emph{weight} of $x_i$. Figure \ref{mlq_ws} shows an example with weights $(1,1,0,0,2,0,1)$.

\begin{figure}[!ht]
  \centerline{\includegraphics[height=1in]{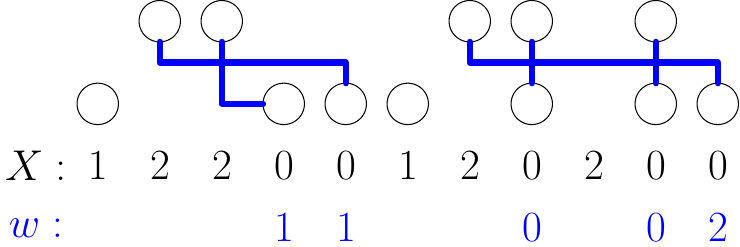}}
\centering
 \caption{For $X=22001202001020$, the hitting weights of the ball drops are $(1,1,0,0,2,0,1)$.}\label{mlq_ws}
 \end{figure}

\begin{lemma}\label{uniqueness}
At the end of the ball drop algorithm, the list of weights $(w_1,\ldots,w_{\ell})$ uniquely determines the initial configuration of top row balls.
\end{lemma}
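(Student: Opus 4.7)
The plan is to describe an explicit inversion algorithm that reconstructs the top row ball positions from the weight list $(w_1,\ldots,w_\ell)$ together with the fixed bottom row configuration (which is determined by the type $X$ of the MLQ and the cyclic shift convention). Once such an algorithm is shown to produce the actual top row, uniqueness follows immediately.

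The algorithm processes the drops in chronological order, i.e.\ it recovers the top row balls from rightmost to leftmost. At step $k$, having already identified the first $k-1$ drops $(t^{(1)},x^{(1)}),\ldots,(t^{(k-1)},x^{(k-1)})$ with $t^{(1)}>\cdots>t^{(k-1)}$, we do the following: for each as-yet-unoccupied 0-ball $x_i$, consider the candidate starting position $t$ defined as the unique value such that the bottom-row interval $[t,x_i-1]$ contains no 1-ball, no currently-unoccupied 0-ball, and exactly $w_i$ 2's not yet marked by any earlier drop. Among all candidates $(t,x_i)$ with $t<t^{(k-1)}$ (no such constraint when $k=1$), select the one with the largest $t$ and declare it to be $(t^{(k)},x^{(k)})$; then mark every 2 in $[t^{(k)},x^{(k)}-1]$ and mark $x^{(k)}$ as occupied. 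After $\ell$ iterations we return $T=\{t^{(1)},\ldots,t^{(\ell)}\}$.

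Correctness is proved by induction on $k$. The ``easy'' half is to check that the actual drop $(t^{(k)},x^{(k)})$ of the MLQ is always a valid candidate at step $k$: this follows directly from the definition of the ball drop algorithm (which ensures $[t^{(k)},x^{(k)}-1]$ contains no unoccupied bottom row ball) together with the definition of $w_i$ (which counts precisely the newly marked 2's of this drop). The crux, and the main obstacle, is to rule out a spurious candidate $(t^*,x^*)$ with $t^*>t^{(k)}$. Such a $t^*$ would lie strictly in the gap $(t^{(k)},t^{(k-1)})$ between two consecutive actual top row balls, and therefore host no top row ball; meanwhile the actual drop to $x^*$ would occur at some later iteration $j>k$ from $t^{(j)}<t^{(k)}<t^*$.

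To derive the contradiction I would exploit the non-crossing structure of drop intervals: because drops are processed right-to-left and the cyclic shift convention forbids wrap-around, any two drop intervals $[t^{(a)},x^{(a)}]$ and $[t^{(b)},x^{(b)}]$ are either nested or disjoint. This lets one bookkeep precisely which 2's in $[t^{(j)},x^*-1]$ are marked by the intermediate drops $k,k+1,\ldots,j-1$, and then compare the unmarked count at time $j$ (which equals $w_{x^*}$) against the candidate's hypothetical unmarked count in the smaller interval $[t^*,x^*-1]$ at time $k$ (which the validity conditions also demand to equal $w_{x^*}$). Decomposing $[t^{(j)},x^*-1]$ as $[t^{(j)},t^*-1]\cup[t^*,x^*-1]$ and using that the validity of the spurious candidate forces $[t^*,x^*-1]$ to contain only 2's and previously occupied 0-balls, one finds that the two counts cannot simultaneously match $w_{x^*}$, yielding the desired contradiction. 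Once this step is in place, the algorithm produces the unique top row consistent with the weights, proving the lemma.
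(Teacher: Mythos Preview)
Your reconstruction strategy can be made to work, but it is considerably more elaborate than what the paper does, and the place you flag as ``the crux'' is left as a sketch that a reader cannot readily verify. In particular, the sentence ``one finds that the two counts cannot simultaneously match $w_{x^*}$'' hides the real argument: one must show that the intermediate drops $k,\ldots,j-1$ all terminate strictly left of $t^*$ (because $x^*$ is still unoccupied and blocks them, while the 0-balls in $[t^*,x^*-1]$ are already occupied), so that the unmarked $2$'s in $[t^*,x^*-1]$ are unchanged between steps $k$ and $j$; and then one must analyze the bottom-row content at position $t^{(j)}$ itself to obtain the contradiction. None of this is hard, but it is not written.

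The paper avoids this whole candidate-selection layer. It does \emph{not} try to recover the chronological order of the drops; it processes the 0-balls in their fixed spatial order, from right to left, and for each $x_i$ performs a single ``ball lift'': place a top-row ball above the $w_i$-th currently unmarked vacancy to the left of $x_i$ and mark those $w_i$ vacancies. The $X$-consistency inequality guarantees there are enough unmarked vacancies at each step, so the procedure is well defined, and it is then immediate that this is the inverse of the right-to-left ball drop. The point is that one does not need to reconstruct \emph{which} top ball hit \emph{which} 0-ball (indeed the ball-lift matching need not agree with the original one); one only needs to recover the \emph{set} of top-row positions, and the spatial right-to-left order with the direct ball-lift formula does this without any selection among competing candidates. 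Your approach yields strictly more information (the full chronological pairing) at the price of a more delicate induction; the paper's approach trades that extra information for a one-line inverse.
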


The lemma is proved simply, by reversing the ball drop algorithm and ``lifting'' the bottom row 0-balls from right to left such that each ball at location $x_i$ marks $w_i$ unmarked vacancies. We call the reverse of a ball drop to a bottom row 0-ball at location $x_i$ a \emph{ball lift} from the 0-ball at location $x_i$, defined below.  

\begin{defn}\label{ball_lift}
Let $(x_1,\ldots,x_{\ell})$ be the locations of the 0-balls of an MLQ with corresponding weights $(w_1,\ldots,w_{\ell})$; all vacancies are initially unmarked. A \emph{ball lift} from a 0-ball at location $x_i$ with weight $w_i$ is the following. A top row ball is placed directly above the $w_i$'th consecutive unmarked vacancy to the left of $x_i$, and each of those $w_i$ vacancies becomes marked. 
\end{defn}

To show the ball lift is well-defined, i.e. that there are always $w_i$ unmarked vacancies to the left of $x_i$ with no 1-ball in between, we need the following lemma, the proof of which is obtained directly by following the ball-drop algorithm.

\begin{lemma}\label{MLQ_technical}
Suppose $M$ is an MLQ of size $(k,r,\ell)$, and in the bottom row, $x_1<\cdots<x_{\ell}$ are the locations of the 0-balls, and $b_1\leq \cdots \leq b_{\ell}$ are the locations of the \emph{nearest} 1-balls, defined by
\[
 b_i = \max\{y<x_i\ :\ \mbox{1-ball at location } y\}. 
\]
Let $(w_1,\ldots,w_{\ell})$ be the weights on locations $(x_1,\ldots,x_{\ell})$ after the ball drops. Then the conditions on $(w_1,\ldots,w_{\ell})$ are:
\[
 \sum_{j:\ b_i <x_j \leq x_i} w_j+1 \leq x_i-b_i
\]
for each $i$. In other words, there are enough vacancies to the left of $x_i$ so that it can have weight $w_i$. We call such a list $(w_1,\ldots,w_{\ell})$ an \emph{$X$-consistent} list.
\end{lemma}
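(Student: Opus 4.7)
The plan is to prove part (i) by a direct counting argument resting on a single geometric constraint: any drop whose destination lies in $(b_i,x_i]$ must also originate in $(b_i,x_i]$, so the vacancies it marks are confined to that interval.

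First I would show that for every index $j$ with $b_i<x_j\leq x_i$, the top-row ball that drops to $x_j$ starts at some position $t_j>b_i$. Indeed, $b_i$ is the location of a $1$-ball, hence an unoccupied bottom-row ball throughout the entire drop algorithm. A top-row ball starting at any $t\leq b_i$ would land on the first unoccupied bottom-row ball weakly to its right, and this destination is necessarily at a position $\leq b_i$; in particular it could never reach $x_j>b_i$. Hence $t_j\in(b_i,x_j]\subseteq(b_i,x_i]$.

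Next, the vacancies marked by the drop from $t_j$ to $x_j$ all lie in the bottom-row interval $[t_j,x_j-1]$, which by the previous step is contained in $(b_i,x_i)$. Consequently $\sum_{j:\,b_i<x_j\leq x_i} w_j$ counts distinct bottom-row vacancies, all lying in $(b_i,x_i)$. Writing $c=|\{j:\,b_i<x_j\leq x_i\}|$, the interval $(b_i,x_i)$ contains no $1$-balls (by the maximality defining $b_i$) and exactly $c-1$ zero-balls (every $x_j$ in the set except $x_i$ itself), hence exactly $(x_i-b_i-1)-(c-1)=x_i-b_i-c$ vacancies. Combining these observations yields
\[
\sum_{j:\,b_i<x_j\leq x_i} w_j \ \leq\ x_i-b_i-c,
\]
and adding $1$ to both sides together with $c\geq 1$ (since $j=i$ itself lies in the set) gives $\sum w_j+1\leq x_i-b_i$, as desired.

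For part (ii)---which I anticipate to be a converse asserting that every $X$-consistent list arises from a unique MLQ via the right-to-left ball-lift procedure, thereby making the lift of Definition \ref{ball_lift} well-defined---the natural approach is induction on the number of lifts performed from right to left. The inequality in (i) should guarantee that at each stage the $w_i$-th unmarked vacancy to the left of $x_i$ both exists and lies strictly to the right of $b_i$, so the lifted top-row ball is placed in the interval $(b_i,x_i]$ and will correctly drop back to $x_i$ rather than getting caught by $b_i$. The main obstacle I foresee is verifying that the $X$-consistency inequality of part (i) is preserved as an invariant across each lift---essentially, showing that after a lift from $x_i$ consumes $w_i$ unmarked vacancies in $(b_i,x_i)$, the residual list of weights and residual collection of unmarked vacancies still satisfies the analogous inequality for every $j<i$.
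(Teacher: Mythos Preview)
Your argument for (i) is correct and in fact supplies more detail than the paper, which simply states that the lemma ``is obtained directly by following the ball-drop algorithm'' and gives no further proof. The key observation---that the $1$-ball at $b_i$ is perpetually unoccupied and therefore intercepts any drop originating at a position $\leq b_i$---is exactly the right one.

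One point to correct: you have misread the inequality. The intended statement is
\[
\sum_{j:\,b_i<x_j\leq x_i} (w_j+1)\ \leq\ x_i-b_i,
\]
i.e.\ the ``$+1$'' is inside the sum (this is confirmed by how the inequality is used in the proof of Lemma~\ref{uniqueness}, where each $0$-ball together with its $w_j$ marked vacancies accounts for $w_j+1$ positions). Your penultimate displayed line $\sum w_j \leq x_i-b_i-c$ is exactly this statement, since $c$ is the number of summands. Your final sentence, invoking $c\geq 1$ to deduce the weaker $(\sum w_j)+1\leq x_i-b_i$, throws away the full strength of what you proved; just drop that last step and conclude directly.

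Regarding (ii): the paper leaves this item blank, so there is nothing to compare against. Your anticipated content and inductive strategy match how the surrounding text (Definition~\ref{ball_lift} and the proof of Lemma~\ref{uniqueness}) uses the lemma, so your outline is on target.
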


\begin{proof}[Proof of Lemma \ref{uniqueness}]
The lemma is equivalent to showing that $M$ is the unique MLQ of type $X$ with $X$-consistent weights $(w_1,\ldots,w_{\ell})$. We show this by reconstructing an MLQ of type $X$ from an $X$-consistent list $(w_1,\ldots,w_{\ell})$. Let $X$ have its 0 particles at locations $(x_1,\ldots,x_{\ell})$. Now perform ball lifts on the 0-balls from right to left (which is precisely the reverse of the ball-drop algorithm). For a ball lift with weight $w_i$ to be possible, there must be at least $w_i$ unmarked vacancies to the left of $x_i$ with no 1-balls in between. This translates precisely to the requirement that 
\[
 x_i-b_i-1- \sum_{j:\ b_i <x_j < x_i} w_j+1 \geq w_i,
\]
which we notice is the same as the condition placed on the $w_i$'s in Lemma \ref{MLQ_technical}. Thus the ball drop algorithm has a well-defined inverse, and so the $X$-consistent list of weights $(w_1,\ldots,w_{\ell})$ corresponds to a unique MLQ of type $X$.
\end{proof}


\subsection{Map from TRAT to MLQ}\label{sec_bij}

Let $R$ be a TRAT of type $X\in \TASEP(k,r,\ell)$. To describe a well-defined map $R$ to a multiline queue, we first perform flips on the tiling of $R$ to obtain the tiling $\mathcal{T}_X$ from Definition \ref{Tx_def}.

Without loss of generality, let $X$ begin with a 1. Note that any north-strip that does not have an up-arrow below a 10-tile will necessarily acquire an up-arrow at the 10-tile. Thus there can be no arrows in north-strips above any 10-tiles. In particular, this implies the TRAT $R$ of type $X$ has all of the left-arrows and up-arrows contained in its $\ell$ $X$-strips above the path $P(X)$ in $\mathcal{H}(X)$, and so there is no ambiguity about which strip to start with. 

We build an MLQ $\mlq(R)$ from $R$ as follows. Let the bottom row of $\mlq(R)$ have type $X$. Let $R$ have its north-strips at locations $x_1<\cdots<x_{\ell}$ (from right to left) with $a_i$ left-arrows in strip $x_i$ for each $i$. Perform ball-lifts (of Definition \ref{ball_lift}) sequentially for $x_1,\ldots,x_{\ell}$, with weights $a_1,\ldots,a_{\ell}$, to obtain a unique MLQ $\mlq(R)$ with those weights. Figure \ref{bijection_example} shows an example, and the following lemma shows our bijection is well-defined.

\begin{figure}[!ht]
  \centerline{\includegraphics[height=1in]{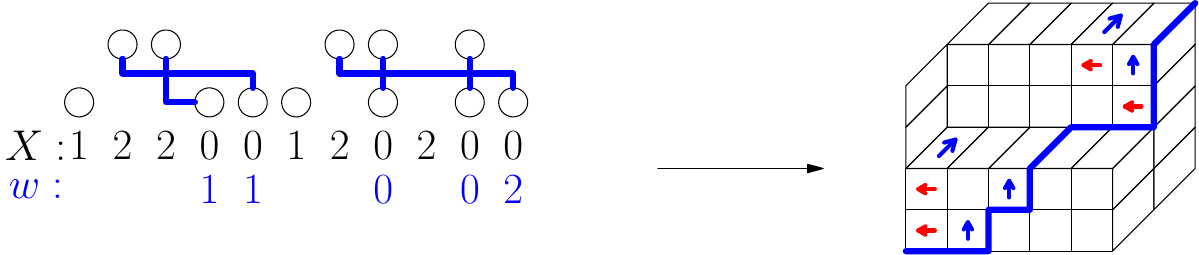}}
\centering
 \caption{The weights of the ball drops of the MLQ on the left are $(1, 1, 0, 1, 1, 0)$, which is also the number of left-arrows in each north-strip from right to left in the corresponding TRAT on the right.}\label{bijection_example}
 \end{figure}

\begin{lemma}
 Suppose the north-strips of $R$ are at locations $x_1<\cdots<x_{\ell}$, with strip $x_i$ containing $a_i$ left-arrows for each $i$. Then $(a_1,\ldots,a_{\ell})$ is an $X$-consistent list, and thus there exists a unique MLQ of type $X$ with weights $(a_1,\ldots,a_{\ell})$.
\end{lemma}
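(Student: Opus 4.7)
My plan is to establish the stronger bound $\sum_{j : b_i < x_j \leq x_i} a_j \leq N_2$, where $N_2$ is the number of 2s in $(b_i, x_i)$. Writing $m \geq 1$ for the number of 0s in $(b_i, x_i]$, one has $x_i - b_i - 1 = N_2 + (m-1) \geq N_2$, so this immediately implies the $X$-consistency condition $\sum a_j + 1 \leq x_i - b_i$ of Lemma \ref{MLQ_technical}; uniqueness of the associated MLQ then follows from Lemma \ref{uniqueness}.

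I would work in the canonical tiling $\mathcal{T}_X$, to which $R$ has already been flipped. Since $b_i$ is the nearest 1 to the left of $x_i$, there are no 1s at positions in $(b_i, x_j)$ for any $j \in \{i-m+1, \ldots, i\}$. Therefore, in the $j$-th $X$-strip, the 10-tile corresponding to the 1 at position $b_i$ sits in the above-$P(X)$ portion (as $b_i < x_j$) and is the lowest 10-tile there; strictly below it, but still above $P(X)$, sit only 20-tiles, one for each 2 in $(b_i, x_j)$.

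The heart of the argument is to pin down the unique up-arrow in each such north-strip. A 10-tile can neither contain nor be pointed at by a left-arrow, so the 10-tile at position $b_i$ must either contain an up-arrow itself or be pointed at by one from below in the same north-strip; moreover each north-strip contains at most one up-arrow, since two would force each other to be empty. Using the toric orientation of the north-strip, one rules out the possibility that the up-arrow sits in the below-$P(X)$ portion, so it must lie either at the 10-tile itself or at a 20-tile below it within the above-$P(X)$ portion. In both cases, every above-$P(X)$ tile at a position $\leq b_i$ is pointed at by the up-arrow (or, in the 10-tile case, is the up-arrow tile itself), so no left-arrow in the $j$-th $X$-strip sits at a position $\leq b_i$.

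Consequently every left-arrow counted by $\sum_{j \in [i-m+1, i]} a_j$ lies in the west-strip associated to some 2 at a position $p \in (b_i, x_i)$; since each west-strip contains at most one left-arrow, the total is at most $N_2$, as required. I expect the main obstacle to be the toric-orientation step: one must verify carefully that an up-arrow in the below-$P(X)$ portion points only at tiles later in that portion and not at any above-$P(X)$ tile, which would in particular leave the 10-tile for $b_i$ unsatisfied and violate the filling rule.
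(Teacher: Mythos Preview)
Your approach is essentially the same as the paper's, and your argument is correct. A few remarks:

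First, the paper's $X$-consistency inequality should be parsed as $\sum_{j:\,b_i<x_j\le x_i}(a_j+1)\le x_i-b_i$ (this is what the ball-lift derivation in the proof of Lemma~\ref{uniqueness} actually gives). Since the number of summands is $m$, this is exactly your bound $\sum a_j\le x_i-b_i-m=N_2$; so what you call the ``stronger bound'' is in fact the condition itself, not a strengthening.

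Second, your toric-orientation concern is precisely what the paper handles in the paragraph preceding the map, where it observes that because $X_1=1$, the topmost above-$P(X)$ tile in every $X$-strip is a 10-tile; this forces the unique up-arrow of each north-strip to lie in the above-$P(X)$ portion, so the below-$P(X)$ portion is empty. Your sketch of why an up-arrow below $P(X)$ would leave the 10-tile at $b_i$ unsatisfied is the right idea.

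Where your write-up improves on the paper's is in making explicit the step that clinches the sum bound: each of the left-arrows counted by $\sum_{j} a_j$ sits in a distinct west-strip (a west-strip admits at most one left-arrow), and every such west-strip corresponds to a $2$ at a position in $(b_i,x_i)$. The paper's proof states only that north-strip $x_i$ has at least $a_i$ free 20-tiles below its first 10-tile and then asserts the inequality; your pigeonhole on west-strips is the missing sentence.
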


Recall that a free 20-tile is one that does not have a left-arrow to its right in the same west-strip or an up-arrow below in the same north-strip.

\begin{proof}
Our proof will show that there is a natural map between the number of left-arrows in north-strip $x_i$ in $R$ and the weight $w_i$ of the 0-ball at location $x_i$ in $\mlq(R)$. 

If a north-strip at location $x_i$ contains $a_i$ left-arrows, then it must have at least $a_i$ free 20-tiles below its first 10-tile. Let $b_i$ be the index of the diagonal strip containing the nearest 10-tile in strip $x_i$. Since diagonal strips cannot intersect, $b_i$ is the index of the nearest diagonal edge to the right of $x_i$ in $P(X)$. In other words, $b_i = \max\{y\ :\ y<x_i,\ X_y=1\}$.\footnote{We note here that this definition of $b_i$ is precisely the location of the first 10-tile only when the particular tiling $\mathcal{T}_X$ is used. That is because the order of the 2- and 1-edges in $P(X)$ matches the order of the 20- and 10-tiles in the $x_i$ north-strip.} Then we have the following conditions on the $a_i$'s.
For each $i$,
\[
 \sum_{j:\ a_i <x_j \leq x_i} a_j+1 \leq x_i-b_i.
\]
Observe that the conditions on the list $(a_1,\ldots,a_{\ell})$ make it an $X$-consistent list. Thus by Lemma \ref{uniqueness}, there exists a unique MLQ $M(R)$ of type $X$ with weights $(a_1,\ldots,a_{\ell})$, obtained by performing ball lifts sequentially for $x_1,\ldots,x_{\ell}$. This completes our proof.
\end{proof}

The inverse map from $\trat: \MLQ(k,r,\ell) \rightarrow \TRAT(k,r,\ell)$ is obtained similarly. Let $M$ have type $X$ with the 0-balls at locations $x_1<\cdots<x_{\ell}$, and with corresponding weights $(w_1,\ldots,w_{\ell})$. Construct a TRAT with shape $\mathcal{H}(X)$ with tiling $\mathcal{T}_X$ such that strip $x_i$ has $w_i$ left-arrows for each $i$ (strips are ordered from right to left). This construction is well-defined since the list $(w_1,\ldots,w_{\ell})$ is $X$-consistent, which is a sufficient condition for a TRAT with such properties to exist. It is unique by construction: when filling the tableau from right to left, in each north-strip the left-arrows must be placed in consecutive free tiles from bottom to top. The maps $R \rightarrow \mlq(R)$ and $M \rightarrow \trat(M)$ are immediately inverses of each other.

\subsection{Nested path map from MLQ to TRAT}\label{sec_paths}
Using nested lattice paths, we obtain a different bijection from MLQs to TRAT. In the following, we assume the MLQ has a 1-ball at its leftmost bottom row location.

A multiline queue naturally has an interpretation in terms of weighted lattice paths, where each row of the MLQ is mapped to a path, and each location in the row determines the type of edge that appears in the path. At $q=0$, fillings of RAT are in bijection with weighted nested lattice paths in the usual 2-TASEP \cite{Man16}, and indeed this property is preserved in the case of the TRAT. Unfortunately, the pairs of lattice paths corresponding to the MLQs are not the same paths that are in bijection with the TRAT. That is, the pair of nested paths corresponding to the TRAT $\trat(M)$ is not the same as the pair of nested paths directly obtained from $M$. However, the set of paths of type $X$ that is obtained from MLQs of type $X$ is the same as the set of paths of type $X$ obtained from TRAT of type $X$; see, for example, Figure \ref{path_to_TRAT}.

\begin{figure}[!ht]
  \centerline{\includegraphics[width=\linewidth]{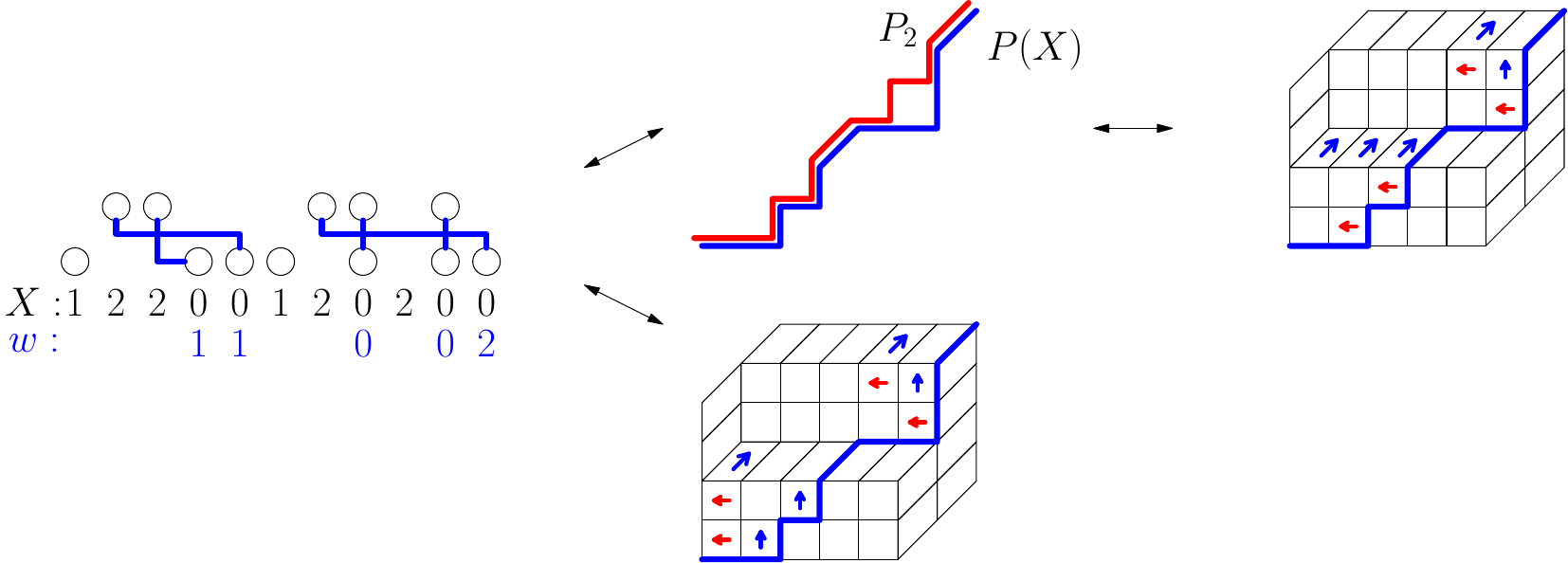}}
\centering
 \caption{The map from a MLQ to a nested pair of lattice paths, which then maps to a TRAT via the canonical lattice path bijection. The second map is from the MLQ $M$ to the TRAT $\trat(M)$. Notice that the two resulting TRAT are not the same.}\label{path_to_TRAT}
 \end{figure}

\begin{defn}
A \emph{2-TASEP compatible} pair of lattice paths is a pair of paths composed of south, west, and southwest edges that coincide at their endpoints, such that the space between the two paths can be completely tiled by squares.
\end{defn}

We construct a pair of lattice paths from an MLQ as follows: the first path, $P_1$, is obtained by reading the bottom row of the MLQ and drawing a south edge for every vacancy, a west edge for every 0-ball, and a southwest edge for every 1-ball. The second path, $P_2$, is obtained by reading the top row of the MLQ and drawing a southwest edge for a vacancy directly above a bottom row 1-ball, a south edge for a vacancy otherwise, and a west edge for a ball.

\begin{lemma}\label{lem_paths}
By our construction, $P_2$ is weakly above $P_1$, and they coincide at every diagonal edge.
\end{lemma}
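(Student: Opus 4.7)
The plan is to analyze the construction column by column. After the cyclic shift placing a 1-ball at the leftmost bottom-row site, no top-row ball wraps around when dropped, so we may treat the $n$ columns linearly. At each column $i$ the pair $(\text{top entry}, \text{bottom entry})$ is one of six combinations, but $(\text{ball},\text{1-ball})$ is impossible: a top-row ball sitting directly above a bottom-row ball would drop onto it (its own column is weakly to the right) and mark it as a 0-ball. The five surviving cases, compared against the definitions of $P_1$ and $P_2$, give:
\begin{center}
\begin{tabular}{l|cc}
$(\text{top},\text{bottom})$ & step of $P_1$ & step of $P_2$ \\\hline
(ball, vacancy)    & S  & W \\
(ball, 0-ball)     & W  & W \\
(vacancy, vacancy) & S  & S \\
(vacancy, 0-ball)  & W  & S \\
(vacancy, 1-ball)  & SW & SW
\end{tabular}
\end{center}

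To prove $P_2$ lies weakly above $P_1$, I would track the height difference $h_2(i)-h_1(i)$. Inspecting the table, this difference increases by $1$ on rows (ball, vacancy) and decreases by $1$ on rows (vacancy, 0-ball), while the remaining rows leave it unchanged; letting $T(i)$ and $B(i)$ denote the number of top-row balls and 0-balls in columns $1,\ldots,i$, summing contributions gives $h_2(i)-h_1(i)=T(i)-B(i)$. The ball-drop algorithm, carried out in left-to-right order (the resulting state is order-independent), produces an injection from top-row balls to 0-balls sending each top-row ball weakly to its right, so $B(i)\leq T(i)$, and hence $h_2(i)\geq h_1(i)$ for every $i$.

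For the diagonal-edge claim, the table shows that a southwest step of either path occurs exactly at a bottom-row 1-ball, so the diagonal steps of $P_1$ and $P_2$ occur in the same columns. It remains to show that these edges sit at the same heights, i.e.\ $h_1(i-1)=h_2(i-1)$ whenever column $i$ hosts a 1-ball, which by the previous computation is equivalent to $T(i-1)=B(i-1)$. The key observation is that a 1-ball at column $i$ is by definition never occupied during the drop. If some top-row ball at column $\leq i-1$ dropped to a 0-ball at column $\geq i$, then at the moment of that drop the 1-ball at column $i$ was an unoccupied bottom-row ball weakly to its right, and the algorithm would have placed the top-row ball there instead — contradiction. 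Hence every top-row ball in columns $\leq i-1$ drops to a 0-ball in columns $\leq i-1$, giving $B(i-1)\geq T(i-1)$, and equality follows.

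The main obstacle is pinning down the drop argument precisely enough, since the algorithm lives on a cylinder and is order-independent only for the final state; the cyclic shift to a 1-ball at the leftmost site is exactly what lets us unroll and treat the dropping map as an injection sending each top-row ball weakly to its right on the line. With that in hand the two claims are immediate consequences of the table together with the bookkeeping inequalities $T(i)\geq B(i)$ and $T(i-1)=B(i-1)$ at 1-ball columns.
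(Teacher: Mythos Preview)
Your proof is correct and follows essentially the same idea as the paper's: both arguments reduce to showing that the cumulative count of top-row balls weakly dominates the cumulative count of 0-balls, with equality at each 1-ball column, because each 0-ball is occupied by a top-row ball weakly to its left and no top-row ball can drop past an (always unoccupied) 1-ball. The paper phrases this interval-by-interval between consecutive 1-balls, while you track the global running difference $T(i)-B(i)$ via the case table; these are the same argument in different clothing. One small wording note: in your contradiction step, the first unoccupied bottom-row ball weakly right of the dropping top-row ball may lie strictly before column $i$, not at $i$ itself, but either way the drop lands at column $\le i$ and hence (since column $i$ is a 1-ball) at column $\le i-1$, which is all you need.
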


\begin{proof}
We consider an interval of vacancies and 0-balls between any two 1-balls at locations $a$ and $b$ in the bottom row of $M$. At each location $a<j\leq b$, there must be at least as many top row balls as there are bottom row 0-balls between locations $a$ and $j$; otherwise, there will be an unoccupied 0-ball, which is a contradiction. Moreover, between $a$ and $b$, there must be exactly the same number of top row balls as there are bottom row 0-balls. The latter implies $P_2$ and $P_1$ coincide at every diagonal edge. Hence for each $a<j\leq b$, $P_1$ takes at least as many steps south as does $P_2$ between edges $a$ and $j$, and thus $P_2$ lies weakly above $P_1$.
\end{proof}

\begin{lemma} $P_1$ and $P_2$ are 2-TASEP compatible paths.
\end{lemma}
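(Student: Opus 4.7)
The plan is to invoke Lemma~\ref{lem_paths} and then verify the two remaining conditions in the definition of 2-TASEP compatibility: that $P_1$ and $P_2$ share both endpoints, and that the region enclosed between them can be tiled by unit squares.

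First I would establish endpoint coincidence by an edge count. By construction, $P_1$ has $k$ south edges (one per bottom row vacancy), $\ell$ west edges (one per 0-ball), and $r$ southwest edges (one per 1-ball). For $P_2$, the key preliminary observation is that the top row position directly above any 1-ball must itself be a vacancy: otherwise a top row ball at that position would, under the ball-drop algorithm, immediately occupy the 1-ball beneath it (as the first unoccupied bottom row ball weakly to its right), contradicting its unoccupied status. Given this, $P_2$ contributes exactly $r$ southwest edges (from vacancies above 1-balls), $\ell$ west edges (from top row balls), and thus $n - r - \ell = k$ south edges. Hence $P_1$ and $P_2$ have identical total displacement and, being read off the same $n$ columns of the (shifted, non-wrapping) MLQ, share both endpoints.

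Next I would establish tileability by partitioning both paths into maximal segments delimited by consecutive diagonal coincidence points of Lemma~\ref{lem_paths}. Within each such segment both $P_1$ and $P_2$ consist purely of south and west edges (any diagonal step is a coincidence point and so sits at a segment boundary), they share starting and ending vertices, and $P_2$ lies weakly above $P_1$. The enclosed region is therefore a skew staircase region whose boundary is axis-aligned, and any such region decomposes tautologically into unit squares. Concatenating these local tilings along the diagonal coincidence points yields a tiling of the full region between $P_1$ and $P_2$.

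The only subtle step is the edge-count for $P_2$, which hinges on the observation that no top row ball sits directly above a bottom row 1-ball; beyond that, everything is a direct geometric consequence of Lemma~\ref{lem_paths}. A small edge case to double-check is $r = 0$: then there are no diagonal edges and hence no coincidence points strictly between the endpoints, but the entire enclosed region is then a single axis-aligned skew shape and the tiling is immediate.
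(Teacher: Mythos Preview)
Your proposal is correct and follows essentially the same approach as the paper: both arguments invoke Lemma~\ref{lem_paths} to conclude that the region between $P_1$ and $P_2$ is bounded entirely by horizontal and vertical edges (since the paths coincide at every diagonal edge), and hence can be tiled by squares. The paper's proof is a one-liner to this effect; your version is simply more explicit, carrying out the segment-by-segment decomposition and, notably, supplying the endpoint-coincidence argument via edge counts, which the paper leaves implicit.
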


\begin{proof}
By Lemma \ref{lem_paths}, the space between the two paths is always bounded by horizontal and vertical edges, and thus can be tiled completely by 20-tiles.
\end{proof}

It is easy to see that any pair of 2-TASEP compatible lattice paths corresponds to a unique multiline queue and vice versa. Building on the author's earlier paper \cite{Man16}, 2-TASEP compatible lattice paths are also in bijection with TRAT. This bijection arises from the canonical lattice path bijection of Catalan paths and Catalan tableaux in the well-studied case of the usual TASEP \cite{Vie07}. We describe the map briefly.
A path weakly above the path $P(X)$ for a TRAT of shape $\mathcal{H}(X)$ is constructed as follows.

The path $P_2$ begins and ends at the endpoints of $P(X)$. It contains $n$ edges, $r$ of which are diagonal, $k$ of which are vertical, and $\ell$ of which are horizontal; its edges are labeled from right to left. Suppose $X$ has its type 1 particles at locations $b_1,\ldots,b_r$. Then $P_2$ has its diagonal edges at locations $b_1,\ldots,b_r$. At each 0-edge of $P(X)$, the path $P_2$ takes $j$ vertical steps down and one horizontal step left, where $j$ is the total number of left-arrows in the 20-tiles of that 0-strip. Once $P_2$ has reached the left border of $\mathcal{H}(X)$, it takes vertical steps down until it reaches the left endpoint of $P(X)$. See an example in Figure \ref{TRAT_to_path}.

\begin{figure}[!ht]
  \centerline{\includegraphics[height=1in]{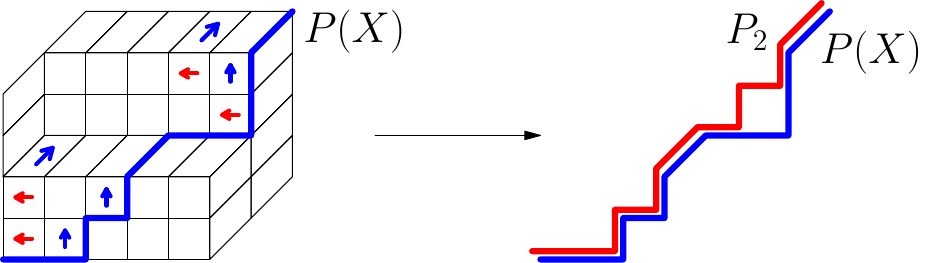}}
\centering
 \caption{An example of the canonical map from a TRAT to a pair of nested lattice paths of type $1220201100$.}\label{TRAT_to_path}
 \end{figure}

The reverse map is as follows: begin with a pair of 2-ASEP compatible nested paths $P(X)$ and $P_2$, assuming $X$ begins with a type 1 particle. Starting from right to left, let each 0-strip of the filling of $\mathcal{H}(X)$ contain $j$ left-arrows in its 20-boxes, followed by an up-arrow, where $j$ is the number of down-steps in $P_2$ preceding the horizontal step corresponding to the given 0-strip. There is a unique way of filling this 0-strip in such a way: the left-arrows must be in the lowest tiles possible, immediately followed by the up-arrow. See an example in Figure \ref{path_to_TRAT}.

This bijection is well-defined due to the following lemma.

\begin{lemma}
2-TASEP compatible paths of type $X$ are in one to one correspondence with an $X$-consistent list.
\end{lemma}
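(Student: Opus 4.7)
The plan is to derive the stated bijection by composing two bijections that are essentially already in place. By Lemma \ref{uniqueness}, MLQs of type $X$ are in bijection with $X$-consistent lists $(w_1,\ldots,w_\ell)$, so it suffices to exhibit a bijection between MLQs of type $X$ and 2-TASEP compatible pairs $(P(X), P_2)$. The forward direction of this second bijection is the map $M \mapsto (P_1, P_2)$ constructed immediately before the lemma, and Lemma \ref{lem_paths} together with the subsequent 2-TASEP compatibility lemma show that its image indeed lies in the class of 2-TASEP compatible pairs.

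First I would check invertibility of the MLQ-to-paths map: given a compatible pair $(P(X), P_2)$, one recovers $M$ by fixing the bottom row from $P(X)$ (south = vacancy, west = 0-ball, southwest = 1-ball) and placing a top-row ball at each location whose $P_2$-edge is either west or southwest. Since $P_2$ coincides with $P_1$ at every diagonal edge and lies weakly above it everywhere else, the count of top-row balls equals the count of bottom-row vacancies plus 1-balls (i.e.\ $\ell$), and the construction is clearly inverse to $M \mapsto (P_1, P_2)$.

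Next I would spell out the composed bijection. Given $(P(X), P_2)$, define $w_i$ to be the number of south edges of $P_2$ lying between its $i$-th and $(i+1)$-st west edges (reading right-to-left, with the $(\ell+1)$-st west edge taken as the right endpoint), discarding any south edges separated from the $i$-th west by an intervening diagonal. A straightforward unwinding of the ball-drop algorithm shows that this list agrees with the weights extracted from the associated MLQ $M$, and hence is $X$-consistent by Lemma \ref{MLQ_technical}(i). Conversely, given an $X$-consistent list, Lemma \ref{uniqueness} produces a unique MLQ whose image under the MLQ-to-paths map is a 2-TASEP compatible pair realizing that list.

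The main obstacle is not logical but notational: one must translate the geometric condition that the cumulative south-count of $P_2$ never exceeds that of $P_1$ between consecutive diagonals into the exact inequality $\sum_{j:\, b_i < x_j \leq x_i} w_j + 1 \leq x_i - b_i$. Once the indexing is aligned (noting that $b_i$ is the location of the nearest diagonal edge to the right of $x_i$ along $P(X)$, and that $x_i - b_i$ counts the edges in that diagonal-delimited block), this reduces to counting south edges of $P_2$ in the block strictly preceding the west edge for $x_i$ and observing that this count must be at most the number of south edges of $P_1$ in the same block minus one (to leave room for the $x_i$ west edge itself). This completes both well-definedness and injectivity of the composed map.
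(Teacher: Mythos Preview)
Your approach---factoring through MLQs via Lemma~\ref{uniqueness} and then showing MLQs are in bijection with 2-TASEP compatible pairs---is different from the paper's and is sound in outline, but your inverse map contains a concrete error. You write that one recovers $M$ by ``placing a top-row ball at each location whose $P_2$-edge is either west or southwest.'' By the paper's construction of $P_2$, a top-row \emph{ball} gives a west edge, while a southwest edge arises from a top-row \emph{vacancy} sitting above a bottom-row 1-ball. So the correct inverse places a top-row ball exactly at the west edges of $P_2$; including the southwest edges would yield $\ell+r$ top-row balls rather than $\ell$. Relatedly, your parenthetical ``the count of top-row balls equals the count of bottom-row vacancies plus 1-balls (i.e.\ $\ell$)'' is off: bottom-row vacancies plus 1-balls is $k+r$, not $\ell$. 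Once you fix the inverse to read off top-row balls from west edges alone, the bijection MLQ $\leftrightarrow$ compatible pair goes through, and your composition argument is valid.

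For comparison, the paper argues directly without invoking MLQs: it reads off $a_i$ as the number of south steps of $P_2$ immediately to the right of the $x_i$-column, and translates the geometric condition ``$P_2$ weakly above $P_1$ between consecutive diagonals'' straight into the inequality $\sum_{j:\,b_i<x_j\le x_i}(a_j+1)\le x_i-b_i$, then checks the converse. This is shorter and self-contained; your route has the advantage of making explicit that the path bijection and the MLQ bijection of Section~\ref{sec_bij} are really the same correspondence viewed through different encodings.
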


\begin{proof}
Let $x_1<\cdots<x_{\ell}$ be the indices of the 0 particles in $X$. Let $a_i$ be the number of south steps in $P_2$ on the right of the $x_i$-column. Let $b_i$ be the index of the nearest 1-particle to the left of $x_i$. $P_2$ is always weakly above $P_1$, and there can never be more south steps in $P_2$ than there are south steps in $P_1$ in the same interval. Thus 
\[
\sum_{j:\ x_i \geq x_j>b_i} a_j+1 \leq x_i-b_i,
\]
which is precisely the condition for $(a_1,\ldots,a_{\ell})$ to be an $X$-consistent list.

On the other hand, if $(a_1,\ldots,a_{\ell})$ satisfies the equation above, we have that at every $x_i$ column, there are at least $a_i$ possible south steps $P_2$ can take so that it is still weakly above $P_1$. Thus $P_2$ and $P_1$ with the given $X$-consistent list of south steps are indeed 2-TASEP compatible paths.
\end{proof}

\section{Inhomogeneous 2-TASEP on a ring}\label{sec_inhomog}

We define the inhomogeneous 2-TASEP on a ring Markov chain as follows: let $X\in \TASEP(k,r,\ell)$. The transitions on this Markov chain are:
\begin{align*}
X'20X''& \mathrel{\mathop{\rightarrow}^{\mathrm{t}}} X'02X''\\
X'21X''& \mathrel{\mathop{\rightarrow}^{\mathrm{d}}} X'12X''\\
X'10X''& \mathrel{\mathop{\rightarrow}^{\mathrm{e}}} X'01X''
\end{align*}
where $0 \leq t,d,e \leq 1$ are parameters describing the hopping
rates. When $t=d=e=1$, we recover the usual 2-TASEP on a ring. When
$t=e$, we recover the inhomogeneous TASEP studied by Ayyer and
Linusson in \cite{AL14}, where they defined weights on MLQs
to solve a conjecture of Lam and Williams \cite{LW12} (our
solution specializes to theirs after some manipulation). The advantage
of our tableaux interpretation of 2-TASEP probabilities is that we can
introduce additional weights to the TRAT which correspond to an inhomogeneous 2-TASEP. Define $\wt(X)$ to be the unnormalized steady state probability of state $X$. We will show it is a polynomial in $t,d,e$ with coefficients in $\mathbb{Z}^+$ by expressing it as a sum over the weighted tableaux.

The Matrix Ansatz of Theorem \ref{ansatz0} naturally generalizes to
the following inhomogeneous version.
\begin{thm}\label{inhomog_ansatz_thm}
Let $X$ be a state of the inhomogeneous 2-TASEP. Let $D$, $A$, and $E$ be matrices satisfying:
\begin{align}\label{inhomog_ansatz}
tDE&=D+E\\\nonumber
dDA&=A\\\nonumber
eAE&=A
\end{align}
then the stationary probability $\Pr(X)$ is proportional to $\tr(\DAE(X))$, where $\DAE(X)$ is given by Definition \ref{dae}.
\end{thm}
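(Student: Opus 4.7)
The plan is to verify directly that $f(X) := \tr(\DAE(X))$ satisfies the stationary master equation for the inhomogeneous 2-TASEP on the ring, which, together with uniqueness of the stationary measure on each irreducible class, gives $\Pr(X) \propto f(X)$.

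First I would write down the master equation at stationarity in the form $\sum_{i=1}^n \tau_i(X) = 0$, where $\tau_i(X)$ is the net inflow minus outflow at bond $(i, i+1)$: explicitly, $\tau_i = -t f(X)$ when $X_i X_{i+1} = 20$ and $\tau_i = +t f(X^{(i)})$ when $X_i X_{i+1} = 02$, with analogous formulas using $d$ on 21/12 bonds and $e$ on 10/01 bonds (and $\tau_i = 0$ otherwise); here $X^{(i)}$ denotes $X$ with positions $i, i+1$ swapped.

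The main approach is to adapt the DEHP hat-matrix trick to the inhomogeneous setting. I would seek auxiliary matrices $\hat D, \hat A, \hat E$ satisfying the divergence relations
\[
tDE - ED = \hat D E - D \hat E, \qquad dDA - AD = \hat D A - D \hat A, \qquad eAE - EA = \hat A E - A \hat E.
\]
Given such matrices, each $\tau_i$ can be rewritten as a ``hat-current'' of the form $\tr(\cdots \hat M_{X_i} M_{X_{i+1}} \cdots) - \tr(\cdots M_{X_i} \hat M_{X_{i+1}} \cdots)$, with $\hat M_2 = \hat D$, $\hat M_1 = \hat A$, $\hat M_0 = \hat E$. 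Summing around the ring and applying cyclic invariance of the trace, the reindexing $i \mapsto i-1$ in the second sum matches it term-by-term to the first, so that $\sum_i \tau_i = 0$.

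The main obstacle is constructing the hat matrices. I would first try a linear ansatz $\hat D = a_1 I + a_2 D + a_3 E + a_4 A$ and similarly for $\hat A, \hat E$, then solve for the coefficients (depending on $t, d, e$) directly from the three divergence relations; if that is insufficient one can enlarge the ansatz or argue existence representation-theoretically from consistency of the algebra. A backup plan, in case the hat matrices turn out to be unwieldy, is to use the algebra relations to reduce each $\tau_i$ directly to a signed sum of length-$(n-1)$ traces $\tr(\DAE(Y^{(i,c)}))$ (where $Y^{(i,c)}$ collapses $X_i X_{i+1}$ to a single character $c$), and then show the resulting sum vanishes by exhibiting a pairing of positive and negative contributions: many cancellations are local (adjacent bonds whose reductions coincide, e.g., for triples $X_i X_{i+1} X_{i+2} \in \{202, 201, 020, 021, 120, 121, 102, 101\}$) and the remaining terms match up via cyclic invariance of the trace. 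In the specialization $t = d = e = 1$ either route recovers the standard proof of Theorem \ref{ansatz0}.
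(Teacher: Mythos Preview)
The paper does not actually prove Theorem \ref{inhomog_ansatz_thm}; it merely states it as ``the natural inhomogeneous generalization'' of the DEHP ansatz (Theorem \ref{ansatz0}), exhibits an explicit representation $D^*,A^*,E^*$, and moves on. So your proposal already goes beyond what the paper supplies, and the hat-matrix/telescoping route you outline is indeed the standard way to establish such a result.

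That said, your divergence relations are slightly off for a \emph{totally} asymmetric process. There is no backward hopping, so the terms $-ED$, $-AD$, $-EA$ should not appear. What you actually need is that for every ordered pair $ab\in\{2,1,0\}^2$ the local net flow satisfies $\tau_i(X)=\tr(\cdots[\hat M_{X_i}M_{X_{i+1}}-M_{X_i}\hat M_{X_{i+1}}]\cdots)$; working these out gives, for instance,
\[
\hat D E - D\hat E \;=\; -tDE,\qquad \hat E D - E\hat D \;=\; tDE,
\]
together with the analogous pairs for $DA/AD$ (with $d$) and $AE/EA$ (with $e$), and vanishing commutators on the diagonal pairs $DD,AA,EE$. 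Your linear ansatz then succeeds immediately with the scalar choice
\[
\hat D=-I,\qquad \hat A=0,\qquad \hat E=I,
\]
since $-E-D=-tDE$, $D+E=tDE$, $-A=-dDA$, $A=dDA$, $-A=-eAE$, $A=eAE$ all follow directly from the algebra relations, and scalars commute with everything. The cyclic-trace telescoping then gives $\sum_i\tau_i(X)=0$ as you described. One caveat the paper itself raises elsewhere (in the proof of Theorem \ref{TRAT_thm}): with infinite-dimensional representations the trace is only finite when $X$ contains at least one type-$1$ particle, so the statement implicitly assumes $r\geq 1$. Your backup plan of reducing to length-$(n-1)$ traces and pairing terms is also viable and amounts to the same cancellation unpacked by hand.
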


A set of matrices that satisfy the conditions of the Ansatz are:
\[
D^*=
\begin{pmatrix} 0&\frac{1}{d}&0&0&\\0&0&\frac{1}{d}&0&\ldots\\0&0&0&\frac{1}{d}&&\\0&0&0&0\\&\vdots&&&\ddots
\end{pmatrix}
\quad
A^*=
\begin{pmatrix} 1&0&0&0&\\1&0&0&0&\ldots\\1&0&0&0&\\1&0&0&0&&\\&\vdots&&&\ddots
\end{pmatrix}
\quad
E^*=
\begin{pmatrix} \frac{1}{e}&0&0&0&\\\frac{d}{te}&\frac{1}{t}&0&0&\ldots\\\frac{d^2}{et^2}&\frac{d}{t^2}&\frac{1}{t}&0&\\\frac{d^3}{et^3}&\frac{d^2}{et^2}&\frac{d}{et}&\frac{1}{t}&&\\&\vdots&&&\ddots
\end{pmatrix}
\]

\begin{example} For example, $\weight(2201021)=\tr(D^*D^*E^*A^*E^*D^*A^*)=\frac{1}{e^2t^2d^3}(d^2+de+te)$ where $D^*, A^*, E^*$ satisfy Equations \ref{inhomog_ansatz}.
\end{example}

\begin{defn}
For $X\in\TASEP(k,r,\ell)$, we call $\TRAT_{\mathcal{T}}(X)$ the set of TRAT on a toric diagram of type $X$ with some fixed tiling $\mathcal{T}$.
\end{defn}

We introduce a weight on the TRAT, which is a monomial in $d$, $e$, and $t$, and is denoted by $\wt(R)$ for $R\in\TRAT_{\mathcal{T}}(X)$ for some tiling $\mathcal{T}$. We define $\wt(X)=\sum_{R\in\TRAT_{\mathcal{T}}(X)}\wt(R)$, which we will show satisfies the same recurrences as $\tr(\DAE(X))$ in the Matrix Ansatz. Given the existence of $D^*, A^*, E^*$ above, we will thus obtain that $\Pr(X)$ is proportional to $\wt(X)$.

\begin{defn}
Let $R$ be a TRAT. Define $\Left(R)$ to be the number of 20-tiles in $R$ containing a left-arrow, and $\Up(R)$ to be the number of 20-tiles in $R$ containing an up-arrow.
\end{defn}

\begin{defn}
Let $X\in\TASEP(k,r,\ell)$ and $R\in\TRAT_{\mathcal{T}}(X)$ for some tiling $\mathcal{T}$. The \emph{weight} of $R$ denoted by $\wt(R)$, is given by 
 \[
   \wt(R)=d^{\Left(R)}e^{\Up(R)}t^{k+\ell-\Left(R)-\Up(R)}.
 \]
Fixing a tiling $\mathcal{T}$ of $\mathcal{H}(X)$, define 
\[
\wt(X)=\sum_{R\in\TRAT_{\mathcal{T}}(X)}\wt(R).
\]
\end{defn}

\begin{example}
 For example, the TRAT in Figure \ref{TRAT_to_path} has weight $d^3et^3$ since it has respectively three left-arrows and one up-arrow in its 20-tiles, and a total of seven arrows.
\end{example}

One can check combinatorially, for instance following the proof of Theorem
3.1 of \cite{MV15}, that 
\begin{align*}
t\wt(X'20X'')&=\wt(X'2X'')+\wt(X'0X''),\\ 
d\wt(X'21X'')&=\wt(X'1X''),\\ 
e\wt(X'10X'')&=\wt(X'1X'')
\end{align*}
for some 2-TASEP words $X', X''$. This is done by reducing a TRAT of type $X$ to a
tableau of smaller size by removing a north-strip or a
west-strip. We will not reproduce this (fairly standard) proof, and instead we will further build on the connection
between the TRAT and the multiline queues by defining a
weighted version of the multiline queues using the bijection of Section
\ref{sec_bij}, and then proving the recurrences are satisfied on the weighted
MLQs.

\subsection{Multiline queue associated to the inhomogeneous 2-TASEP on a ring}\label{sec_weights}
We introduce a \emph{weighted multiline queue} (WMLQ) that generalizes the definition of the multiline queue of Section \ref{mlq_def}. The ball drop algorithm for the WLMQ is the same as for the usual MLQ, and the type of the WMLQ is also obtained in the same way.

\begin{defn} A 0-ball is \emph{unrestricted} if, immediately following its ball drop, there is an unmarked vacancy to its left with no 1-ball in between. 
\end{defn}

\begin{example}
In Figure \ref{weighted_mlq}, the 0-balls in locations 3 and 5 are unrestricted because at the time they are occupied, the vacancy at location 2 remains unmarked. However, when the 0-balls at locations 6 and 11 are occupied, there are no unmarked vacancies to their left before the nearest 1-ball, so those 0-balls are restricted.
\end{example}

\begin{defn}A \emph{weighted multiline queue} (WMLQ) is a usual multiline queue with weights $t,d,e$ assigned to each entry in the bottom row, as follows: 
\begin{itemize}
\item every marked vacancy receives a weight of $d$,
\item every unrestricted 0-ball receives a weight of $e$,
\item every remaining vacancy or 0-ball receives a weight of $t$.
\end{itemize}
\end{defn}

\begin{defn} The \emph{weight} $\wt(M)$ of a weighted MLQ $M$ is the monomial obtained by taking the product of the weights assigned to the bottom row. In other words, if we define $\urest(M)$ to be the number of unrestricted 0-balls and $\mv(M)$ to be the number of marked vacancies, for $M\in\WMLQ(k,r,\ell)$ we obtain
\[
\wt(M)=d^{\urest(M)}e^{\mv(M)}t^{k+\ell-\urest(M)-\mv(M)}.
\]
\end{defn}

\begin{example} The MLQ in Figure \ref{weighted_mlq} has type $X=12200120200102$ and weight $\wt(M)=d^4e^3t^4$. Observe that the 0-balls in locations 4, 8, and 10 are unrestricted and have weight $e$, and the 0-balls at locations 5, 11, and 13 are restricted and have weight $t$. All vacancies except for the one at location 14 are marked and have weight $d$.

\begin{figure}[!ht]
  \centerline{\includegraphics[height=1in]{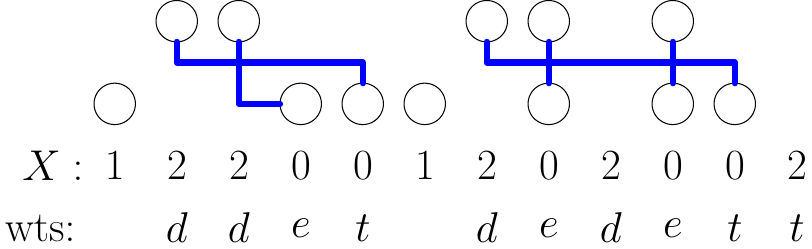}}
\centering
 \caption{An example of $M\in\WMLQ(X)$ for $X=12200120200102$ and weight $\wt(M)=d^4e^3t^4$.}\label{weighted_mlq}
 \end{figure}

\end{example}

\begin{prop}
\[\wt(X) = \sum_{M \in\WMLQ(X)} \wt(M).\]
\end{prop}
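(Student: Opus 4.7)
The plan is to prove the proposition by showing that the bijection $\mlq\colon \TRAT_{\mathcal{T}_X}(X) \to \MLQ(X)$ constructed in Section~\ref{sec_bij} is weight-preserving, i.e., $\wt(R) = \wt(\mlq(R))$ for every $R$. Summing this identity over $R \in \TRAT_{\mathcal{T}_X}(X)$ yields the claim. A weighted version of Lemma~\ref{flip_lemma} --- readily checked from Figure~\ref{flip_bij}, since each filling-preserving flip merely relocates 20-tiles together with the arrows they contain --- ensures that the left-hand side is independent of the tiling used to define $\wt(X)$.

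Both $\wt(R)$ and $\wt(\mlq(R))$ are monomials in $d, e, t$ of common total degree $k+\ell$, so their equality reduces to matching the exponents of $d$ and $e$, namely $\Left(R) = \mv(\mlq(R))$ and $\Up(R) = \urest(\mlq(R))$. The first identity is nearly immediate from the construction of the bijection: since left-arrows can appear only in 20- or 21-tiles, and 21-tiles do not lie in any north-strip, every left-arrow in strip $x_i$ lives in a 20-tile. Combined with the defining property of the bijection --- that strip $x_i$ contains exactly $w_i$ left-arrows, where $w_i$ is the weight of the 0-ball at $x_i$ in $\mlq(R)$ --- this gives $\Left(R) = \sum_i w_i$, and this sum equals $\mv(\mlq(R))$ because each marked vacancy is accounted for by exactly one dropping top-row ball.

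The second identity $\Up(R) = \urest(\mlq(R))$ is the main obstacle. A preliminary observation is that each north-strip contains at most one up-arrow, since two up-arrows in the same strip would force the upper one to be simultaneously pointed-at (hence empty) and non-empty. Consequently $\Up(R)$ equals the number of strips $x_i$ whose unique up-arrow, if present, sits in a 20-tile rather than in the top 10-tile, and my goal becomes to show that this occurs precisely when the 0-ball at $x_i$ is unrestricted. Structurally, once the bottom-most free 20-tiles of strip $x_i$ are filled with $w_i$ left-arrows, each empty tile above them must be covered either by an up-arrow within the strip or by a left-arrow placed earlier in some strip $x_j$ with $j<i$ via their shared west-strip. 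The technical heart is to verify, using the $X$-consistency inequality $\sum_{j:\, b_i < x_j \leq x_i} w_j + 1 \leq x_i - b_i$ of Lemma~\ref{MLQ_technical}, that when the inequality is tight (the restricted case) the top 10-tile is the sole remaining uncovered tile and absorbs the up-arrow, whereas when it is strict (the unrestricted case) exactly one extra 20-tile just above the left-arrow block remains uncovered and must host the up-arrow, contributing $+1$ to $\Up(R)$. Carrying out this bookkeeping inductively in the right-to-left filling order, while tracking the left-arrow coverage arriving from previously-filled strips, is the main combinatorial step; once established, it combines with the first identity to give $\wt(R)=\wt(\mlq(R))$, and summing completes the proof.
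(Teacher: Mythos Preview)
Your proposal is correct and follows essentially the same route the paper takes: the paper establishes the Proposition via Theorem~\ref{TRAT_MLQ_thm}, which shows the bijection $\trat$ is weight-preserving by verifying exactly the two identities $\Left(T)=\mv(M)$ and $\Up(T)=\urest(M)$ that you isolate. Your treatment of the second identity is in fact more detailed than the paper's, which argues more informally that an unmarked vacancy to the left of a 0-ball corresponds to a free 20-tile above the left-arrow block in the matching north-strip; the paper also mentions (but does not carry out) an alternative Matrix Ansatz argument showing both sides satisfy the same recurrences.
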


\begin{remark}
The definition of the ball drops differs from the usual definition of
bully paths on MLQ's, since the ball drops must occur in order from
right to left, whereas for usual bully paths, the order of the ball
drops is inconsequential. The reason for this in our algorithm is to determine which balls receive weight $e$, and which receive weight $1$. Recall that a bottom row ball receives weight $e$ only if there is an unmarked vacancy to its left immediately following its ball drop. 
\end{remark}

When we set $e=t$, the weight of a weighted MLQ of size $(k,r,\ell)$ reduces to the following: let $\mv(M)$ be the number of marked vacancies of $M$. Then $\wt(M)=d^{\mv(M)}$. From the formula in \cite{AL14}, the weight of $M$ is computed to be $t^{k}\left(\frac{d}{t}\right)^{\mv(M)} = t^{k-\mv(M)}d^{\mv(M)}$, which is equivalent to our own computation up to a factor of $t^{\ell}$.

To show the weighted MLQ's indeed provide a formula for inhomogeneous 2-TASEP probabilities, we give a standard Matrix Ansatz proof.

\begin{lemma}\label{MLQ_recurrences}
Let $M$ be a weighted MLQ whose entries are represented as $M={y_1\ \cdots\ y_{n}\choose x_1\ \cdots\ x_{n}}$ for $x_i\in\{0,1,2\}$ and $y_i\in\{\textbf{v},\textbf{b}\}$, with $\textbf{0},\textbf{1},\textbf{2}$ representing a bottom row 0-ball, 1-ball, or vacancy respectively, and with $\textbf{v},\textbf{b}$ representing a top row vacancy or ball, respectively. Suppose $x_i,x_{i+1}=\textbf{2},\textbf{0}$, and let $M'={y_1\ \cdots\ y_{i-1} \choose x_1\ \cdots\ x_{i-1}}$ and $M''={y_{i+2}\ \cdots\ y_{n}\choose x_{i+2}\ \cdots\ x_{n}}$. Then
\begin{equation}\label{t}
\wt(M)=\begin{cases}
d\wt(M'{y_i \choose \textbf{0}}M'') & \mbox{if } y_{i+1}=\textbf{v}\\
e\wt(M'{y_i \choose \textbf{2}}M'') & \mbox{if } y_{i+1}=\textbf{b}.
\end{cases}
\end{equation}
\end{lemma}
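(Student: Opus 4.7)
The plan is to do a case analysis on $y_{i+1}$, constructing in each case a bijection $M\mapsto\widetilde M$ to an MLQ with one fewer column, and showing the weight changes by a single monomial factor: $d$ in case 1, $e$ in case 2. In both cases one bottom row entry is ``removed,'' and the content of the argument is that this entry has precisely the weight claimed and that the reduction does not perturb any other ball drop.

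\textbf{Case 1} ($y_{i+1}=\textbf{v}$). Since there is no top row ball at position $i+1$, the 0-ball at bottom row position $i+1$ must be occupied by a top row ball originating at some position $j\le i$. For this drop to reach $i+1$ without stopping earlier, every bottom row entry at positions $j,j+1,\ldots,i$ is either a vacancy or a previously occupied 0-ball, so the dropping ball traverses the vacancy at position $i$ and marks it, contributing a factor of $d$. Set $\widetilde M:=M'\binom{y_i}{\textbf{0}}M''$, which amounts to merging columns $i$ and $i+1$ into a single column with bottom row 0-ball. I claim the ball drop algorithm on $\widetilde M$ coincides with that on $M$ except that the ball from position $j$ now stops at the collapsed 0-ball rather than travelling one column further; in particular it marks one fewer vacancy (the one at position $i$, which no longer exists). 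The markings at all other positions are unchanged, so the unrestricted/restricted status of this 0-ball (which depends only on the markings at positions $<i$) agrees in $M$ and $\widetilde M$. Every other bottom row entry has identical weight in both, giving $\wt(M)=d\cdot\wt(\widetilde M)$.

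\textbf{Case 2} ($y_{i+1}=\textbf{b}$). The top row ball at position $i+1$ drops straight down onto the 0-ball below, marking no vacancies. Because drops proceed from right to left, every earlier drop (from a top row position $>i+1$) has destination $>i+1$ and does not pass through position $i$; thus position $i$'s vacancy is unmarked immediately after this drop, so the 0-ball at $i+1$ is unrestricted and receives weight $e$. Set $\widetilde M:=M'\binom{y_i}{\textbf{2}}M''$, which deletes column $i+1$ entirely. The drops on $\widetilde M$ at positions $>i+1$ correspond (after a unit left shift) to drops on $M$ at the same positions, and the drops at positions $\le i$ proceed identically. The marking of position $i$'s vacancy in $\widetilde M$ occurs iff some top row ball at $j\le i$ has destination $\ge i+1$ in $\widetilde M$, equivalently destination $\ge i+2$ in $M$; this matches the condition for marking in $M$, because the 0-ball at $i+1$ of $M$ is already occupied by the time these leftward drops begin, so no such ball can stop there. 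All other bottom row weights are identical, so $\wt(M)=e\cdot\wt(\widetilde M)$.

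The main obstacle will be verifying carefully that in each case the ball drop algorithm on $\widetilde M$ faithfully mirrors that on $M$ at every column other than the one merged or deleted. This reduces to checking, by right-to-left induction on the order of the drops, that the bijection between top row balls and their destinations is preserved under the reduction, which in turn boils down to the fact that removing or merging the specified columns never creates or destroys an unoccupied bottom row ball that would reroute a subsequent drop.
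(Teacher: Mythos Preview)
Your proof is correct and follows essentially the same approach as the paper: a two-case analysis on $y_{i+1}$, in each case identifying the weight contributed by the removed column and arguing that the remaining ball drops are unaffected by the reduction. The paper's version is terser---it simply asserts that removing the vacancy (Case~1) or the full column (Case~2) ``has no effect on the rest of the MLQ''---whereas you spell out more carefully why the unrestricted/restricted status of the affected 0-ball and the marking of the vacancy at position $i$ are preserved under the reduction; this extra detail is welcome but not a different argument.
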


\begin{proof}
If $y_{i+1}=\textbf{v}$, then the 0-ball at $x_{i+1}$ must be occupied by some top row ball that passes location $i$. Thus $x_i$ is necessarily a marked vacancy after $x_{i+1}$ is occupied, and hence has weight $d$. Removing the vacancy and shifting $y_i$ to location $i+1$ has no effect on the rest of the weighted MLQ.

If $y_{i+1}=\textbf{b}$, then removing the entire column at location $i+1$ has no effect on the rest of the MLQ since the ball at $y_{i+1}$ always drops directly on top of the 0-ball at $x_{i+1}$. Moreover, the 0-ball at $x_{i+1}$ acquires weight $e$ since at the time it is occupied, the vacancy at $x_i$ is unmarked - since $y_{i+1}$ is dropped before any top row balls to its left. Thus we obtain Equation \eqref{t}.
\end{proof}

\begin{thm}\label{main_result}
Let $X\in\TASEP(k,r,\ell)$ and let $f(X)=\tr(\DAE(X))$ as defined in Theorem \ref{inhomog_ansatz_thm}. Then
\begin{equation}\label{rec}
\frac{t^{k+\ell}}{d^ke^{\ell}}f(X) = \sum_{M\in \WMLQ(X)} \wt(M).
\end{equation}
\end{thm}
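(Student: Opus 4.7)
The plan is to prove Theorem \ref{main_result} by induction on $n = |X|$, following the canonical Matrix Ansatz strategy: I will show that $g(X) := \sum_{M \in \MLQ(X)} \wt(M)$ and $h(X) := \frac{t^{k+\ell}}{d^k e^{\ell}} f(X)$ satisfy identical recurrences and agree in the base cases.

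On the trace side, the relations (\ref{inhomog_ansatz}) of Theorem \ref{inhomog_ansatz_thm} give
\[
t\,f(X'20X'') = f(X'2X'') + f(X'0X''), \qquad d\,f(X'21X'') = f(X'1X''), \qquad e\,f(X'10X'') = f(X'1X'')
\]
whenever $X$ contains the relevant length-two substring. Multiplying these through by the appropriate size-dependent prefactor and tracking how $(k,\ell)$ changes after each substitution converts them into scaled recurrences for $h(X)$.

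On the queue side, Lemma \ref{MLQ_recurrences} directly yields
\[
g(X'20X'') = d\,g(X'0X'') + e\,g(X'2X'')
\]
by splitting MLQs of type $X'20X''$ according to whether the top-row entry above the 0 is a vacancy or a ball. I would derive two analogous weight-preserving reductions for the remaining patterns: for $X'21X''$, both top entries above the 2 and above the 1 must be vacancies (else the 1-ball would be occupied by a dropping top ball), and removing the 2-column contributes an unmarked-vacancy factor $t$, giving $g(X'21X'') = t\,g(X'1X'')$; for $X'10X''$, the top entry above the 0 must be a ball and above the 1 a vacancy, and removing the restricted-0-column that was just occupied with no horizontal motion contributes a factor $t$, giving $g(X'10X'') = t\,g(X'1X'')$. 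Combining these with the ansatz recurrences via a direct bookkeeping check shows that $g$ and $h$ obey the same set of recurrences. The base case is $X$ with no 1-particles, where $\mathcal{H}(X)$ degenerates to a rectangle and both sides evaluate directly via the analysis already given in the proof of Theorem \ref{TRAT_thm}.

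The most delicate step will be verifying the 10 reduction, because the ``restricted'' versus ``unrestricted'' classification of each 0-ball in an MLQ depends on the location of its nearest 1-ball on the left, and removing the 1-column at position $i$ could in principle shift this classification for 0-balls sitting to the right of the removed column. I expect to handle this by observing that only the 0-ball at position $i+1$ (which is itself removed) has the 1-ball at position $i$ as its nearest left 1-ball; for every remaining 0-ball the nearest left 1-ball, and hence its restricted/unrestricted label, is unchanged. A short verification then confirms that the column-removal map is a weight-preserving bijection onto $\MLQ(X'1X'')$ contributing exactly the claimed factor of $t$, completing the inductive step.
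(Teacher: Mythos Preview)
Your inductive strategy via the Matrix Ansatz and the $20$-recurrence from Lemma \ref{MLQ_recurrences} is the same engine the paper uses, and your derivations of the auxiliary $21$- and $10$-reductions on the MLQ side are correct. However, your base case is a genuine gap.

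You take as base case ``$X$ with no 1-particles'', but this fails twice over. First, none of your three reductions ever changes $r$: the $20$-move deletes a $2$ or a $0$, the $21$-move deletes a $2$, and the $10$-move deletes a $0$. Hence starting from any $X$ with $r\ge 1$ your induction can never reach an $r=0$ state, so that ``base case'' is simply unreachable. Second, when $r=0$ the trace $f(X)=\tr(\DAE(X))$ is not finite---this is precisely the exceptional case flagged at the start of the proof of Theorem \ref{TRAT_thm} that you cite---so the identity you want is not even well-posed there. The actual terminal states of your triple recursion on a ring are the constant words $0^n$, $1^n$, $2^n$, and two of those three land you back in the $r=0$ pathology.

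The paper avoids all of this by using \emph{only} the $20$-recurrence and taking as base case the states with no cyclic occurrence of $20$. For such $X$, every $0$-ball has a $1$-ball immediately to its left, which forces the unique MLQ to have a top-row ball sitting directly above each $0$-ball; every vacancy is then unmarked and every $0$-ball restricted, so $\sum_{M}\wt(M)=t^{k+\ell}$, while on the ansatz side $f(X)$ reduces directly using only $dDA=A$ and $eAE=A$. Your $21$- and $10$-reductions are valid but redundant once you adopt this base case; dropping them both simplifies the argument and sidesteps the delicate restricted/unrestricted bookkeeping you anticipated in your last paragraph.
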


\begin{proof}
Our proof is by induction on the size of $X$. 

For our base case, we consider $X$ which has no instance of $20$. For such $X$, there is a unique MLQ, since no bottom row 0-ball has a vacancy to its left without a 1-ball in between, so every 0-ball must be occupied by a top row ball directly above it. This also implies there are no marked vacancies and every 0-ball is restricted. Thus the $k$ vacancies contribute weight $t^k$ and the $\ell$ 0-balls contribute weight $t^{\ell}$, so $\sum_{M\in \WMLQ(X)} \wt(M)=t^{k+\ell}$. On the Matrix Ansatz side, we directly obtain $f(X)=d^{k}e^{\ell}$. In particular, this is true with $k=0$ or $\ell=0$.

Now, suppose we have $K,L$, such that for any $X\in\TASEP(k,r,\ell)$ with $k \leq K$ and $\ell<L$, Equation \eqref{rec} is satisfied. We will show that Equation \eqref{rec} is also satisfied for $Y\in\TASEP(k,r,\ell)$ where $k,\ell=K,L$. By our base case, if $Y$ has no instance of 20, we are done. Otherwise, let $Y=Y'20Y''$ with $2,0$ in positions $i,i+1$. 

We partition the set of weighted MLQs of type $Y$ into two depending on the contents of column $i+1$:
\begin{multline*}
\WMLQ(Y) = \left\{M\in\WMLQ(Y) : M=M'{y_i\choose \textbf{2}}{\textbf{v}\choose\textbf{0}}M''\right\}\\
 \bigcup  \left\{M\in\WMLQ(Y) : M=M'{y_i\choose \textbf{2}}{\textbf{b}\choose\textbf{0}}M''\right\}.
\end{multline*}
We show the following are bijections:
\begin{equation}\label{bij1}
\WMLQ(Y'0Y'') \Longleftrightarrow \left\{M\in\WMLQ(Y) : M=M'{y_i\choose \textbf{2}}{\textbf{v}\choose\textbf{0}}M''\right\}
\end{equation}
\begin{equation}\label{bij2}
\WMLQ(Y'2Y'') \Longleftrightarrow \left\{M\in\WMLQ(Y) : M=M'{y_i\choose \textbf{2}}{\textbf{b}\choose\textbf{0}}M''\right\}
\end{equation}

For the first equation, let $M=M'{y_i\choose \textbf{2}}{\textbf{v}\choose\textbf{0}}M''\in\WMLQ(Y)$. Then $M'{y_i\choose\textbf{0}}M''\in\WMLQ(Y'0Y'')$ since the 0-ball is still occupied by the same ball in both $M$ and the reduced MLQ. Moreover, given an MLQ $\hat{M}=M'{y_i\choose \textbf{0}}M'' \in\WMLQ(Y'0Y'')$, it is immediate that inserting two vacancies to obtain $M'{y_i\choose \textbf{2}}{\textbf{v}\choose \textbf{0}}M''$ gives back $M$, thus establishing the bijection.

Similarly, let $M=M'{y_i\choose \textbf{2}}{\textbf{b}\choose\textbf{0}}M''\in\WMLQ(Y)$. Then $M'{y_i\choose\textbf{2}}M''\in\WMLQ(Y'2Y'')$ since the column at location $i+1$ in $M$ had no effect on the rest of the weighted MLQ, keeping its type the same minus the 0 in location $i+1$. It's clear this is a bijection as well. Thus we obtain

\[
\sum_{M\in\WMLQ(Y)} \wt(M)=\sum_{\substack{M',\ M'',\ x\\\mbox{s.t.}\ M\in\WMLQ(Y)}} \wt(M' {x\choose \textbf{2}}{\textbf{v}\choose \textbf{0}}M'') + \wt(M' {x\choose \textbf{2}}{\textbf{b}\choose \textbf{0}}M'').
\]
By Lemma \ref{MLQ_recurrences}, this equals
\[
\sum_{\substack{M',\ M'',\ x\\\mbox{s.t.}\ M\in\WMLQ(Y)}} d\wt(M' {x\choose \textbf{0}}M'') + e\wt(M' {x\choose \textbf{2}}M''),
\]
which reduces to 
\[
\sum_{M\in\WMLQ(Y'20Y'')}\wt(M)= \sum_{M\in\WMLQ(Y'0Y'')}d\wt(M) + \sum_{M\in\WMLQ(Y'2Y'')}e\wt(M)
\]
by our arguments above. Consequently, by our induction assumption and Theorem \ref{inhomog_ansatz_thm}, this equals
\[
\sum_{M\in\WMLQ(Y)}\wt(M)=\frac{1}{t^{k+\ell-1}}\left(d \cdot d^{k-1}e^{\ell}f(Y'0Y'')+e \cdot d^ke^{\ell-1} f(Y'2Y'')\right) = \left(\frac{1}{t}\right)\frac{d^ke^\ell}{t^{k+\ell-1}}f(Y),
\]
thus completing the proof.
\end{proof}

\begin{remark}
We obtain a solution to the inhomogeneous TASEP studied in \cite{FM07} by setting $e=t=x_1$, $e=x_2$, where $x_1$ is the rate of the transition $20\rightarrow 02$, $10\rightarrow 01$, and $x_2$ is the rate of the transition $21 \rightarrow 12$. 
\end{remark}

\begin{remark}
Observe that the parameter $t$ is unnecessary, since for any $M\in\WMLQ(k,r,\ell)$ and $R\in\TRAT(k,r,\ell)$, the monomials $\wt(M)$ and $\wt(R)$ have degree $k+\ell$. We can thus simplify all expressions by setting $t=1$ without losing any information, and we will do so for the remainder of the paper.
\end{remark}

\begin{thm}\label{TRAT_MLQ_thm}
 Let $T=\trat(M)$ for weighted MLQ $M$. Then 
 \[
 \wt(T)=\wt(M). 
 \]
\end{thm}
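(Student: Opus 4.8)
The plan is to prove the equality by a bijective induction on $k+\ell$ (after setting $t=1$ and normalizing so that $X$ begins with a $1$) that reduces both objects at a single chosen $20$ and shows that $\trat$ intertwines the strip-removal reduction on TRAT used in the proof of Theorem \ref{TRAT_thm} with the column/vacancy-removal reduction on MLQs of Lemma \ref{MLQ_recurrences}, matching the $d$ and $e$ weight factors on the two sides. For the base case I would take $X$ with no occurrence of $20$: then $M$ is the unique MLQ of type $X$ with $\mv(M)=\urest(M)=0$, so $\wt(M)=1$, while in $T=\trat(M)$ the canonical construction places no left-arrows and caps each north-strip with an up-arrow lying in its bottom $10$-tile, so $\Left(T)=\Up(T)=0$ and $\wt(T)=1$; hence $\wt(T)=\wt(M)$.

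For the inductive step, choose any occurrence $X=Y'20Y''$ with the $2,0$ at positions $i,i+1$ (one exists whenever $X$ is not a base case, and since $X_1=1$ it is never a wrap-around pair). Write $M=M'\binom{y_i}{\mathbf 2}\binom{y_{i+1}}{\mathbf 0}M''$ and let $x_{j_0}=i+1$ index this $0$-ball, of weight $w_{j_0}$. The key local fact is that the $20$-tile adjacent to the $2,0$ corner of $P(X)$ has its right (vertical) edge equal to the $2$-edge and its bottom (horizontal) edge equal to the $0$-edge of $P(X)$; it therefore sits at the bottom of its north-strip and at the right end of its west-strip, so that neither an up-arrow (which points only upward) nor a left-arrow (which points only leftward) can point at it. This tile is thus free and must itself carry an arrow. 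In $T=\trat(M)$ the north-strip at $x_{j_0}$ receives $w_{j_0}$ left-arrows in its lowest free $20$-tiles, capped by a single up-arrow, so this corner tile carries a left-arrow exactly when $w_{j_0}\geq 1$ and the capping up-arrow otherwise. On the MLQ side, $w_{j_0}\geq 1$ holds exactly when $y_{i+1}=\mathbf v$: a top ball occupying the $0$-ball at $i+1$ that is not directly above it must travel rightward through the vacancy at position $i$ and mark it (that vacancy is still unmarked, since no earlier right-to-left drop can reach past the as-yet-unoccupied $0$-ball at $i+1$), whereas $y_{i+1}=\mathbf b$ forces a straight drop with $w_{j_0}=0$. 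Hence the corner tile carries a left-arrow iff $y_{i+1}=\mathbf v$ and an up-arrow iff $y_{i+1}=\mathbf b$.

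Next I would verify that $\trat$ commutes with the reductions. If $y_{i+1}=\mathbf v$, deleting the west-strip $\mathbf w$ through this tile removes exactly one $20$-tile left-arrow together with a collection of empty (pointed-at) tiles, so $\wt(T)=d\,\wt(T\setminus\mathbf w)$, and $T\setminus\mathbf w$ has north-strip $x_{j_0}$ carrying $w_{j_0}-1$ left-arrows with all other strip counts unchanged; this is precisely $\trat(M_1)$ for $M_1=M'\binom{y_i}{\mathbf 0}M''$, whose weight list is that of $M$ with $w_{j_0}$ lowered by one. Since Lemma \ref{MLQ_recurrences} gives $\wt(M)=d\,\wt(M_1)$, the inductive hypothesis $\wt(T\setminus\mathbf w)=\wt(M_1)$ yields $\wt(T)=\wt(M)$. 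If $y_{i+1}=\mathbf b$, deleting the north-strip $\mathbf n$ through this tile removes exactly one $20$-tile up-arrow (all tiles above it being empty, as $w_{j_0}=0$), so $\wt(T)=e\,\wt(T\setminus\mathbf n)=e\,\wt(\trat(M_2))$ for $M_2=M'\binom{y_i}{\mathbf 2}M''$, while Lemma \ref{MLQ_recurrences} gives $\wt(M)=e\,\wt(M_2)$; the inductive hypothesis again closes the step.

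The main obstacle I expect is justifying that deleting a single west- or north-strip leaves the filling of every other strip, and hence all remaining arrow counts, unchanged, so that the reduced object is genuinely $\trat(M_1)$ or $\trat(M_2)$ on the canonical tiling. This is exactly the filling-preserving strip removal used in the proof of Theorem \ref{TRAT_thm} (established in detail in \cite{omxgv}), which I would cite rather than reprove. A global route avoids the induction entirely: the construction of $\trat$ gives $\Left(T)=\sum_i w_i=\mv(M)$ at once (each marked vacancy is counted once as a left-arrow), and a per-strip count of the free $20$-tiles below the first $10$-tile against the vacancies of the block $(b_i,x_i)$ marked at or to the right of $x_i$ shows the capping up-arrow lands in a $20$-tile iff $x_i$ is unrestricted, giving $\Up(T)=\urest(M)$; since both weights have total degree $k+\ell$, these two equalities force $\wt(T)=\wt(M)$. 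This alternative has the same technical core, namely relating left-arrows in right-hand strips to the emptied tiles they create in left-hand strips, so I would present the localized inductive version as the cleaner argument.
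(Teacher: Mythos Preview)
Your inductive argument is correct, but it is not the route the paper takes, and in fact the paper chooses precisely the ``global route'' you relegate to an afterthought. The paper's proof is a direct count: since $\trat$ is defined by placing $w_i$ left-arrows in north-strip $x_i$, and north-strips contain only $20$- and $10$-tiles while left-arrows can sit only in $20$- or $21$-tiles, every left-arrow placed lands in a $20$-tile, giving $\Left(T)=\sum_i w_i=\mv(M)$ at once. Then the capping up-arrow in strip $x_i$ lies in a $20$-tile exactly when a free $20$-tile remains below the first $10$-tile after the $w_i$ left-arrows are placed, which the paper argues is equivalent to the $0$-ball at $x_i$ being unrestricted; hence $\Up(T)=\urest(M)$, and the weight equality follows.

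Your approach trades this two-line count for an induction that reduces at a $20$ corner, matching the strip removal on TRAT with the column removal of Lemma~\ref{MLQ_recurrences}. This is sound and has the appeal of recycling the recursions already proved for Theorems~\ref{TRAT_thm} and~\ref{main_result}, but the real content then shifts to verifying that $\trat$ intertwines the two reductions --- that $T\setminus\mathbf{w}=\trat(M_1)$ and $T\setminus\mathbf{n}=\trat(M_2)$ on the canonical tiling --- which is exactly the step you flag as the main obstacle. Since the bijection was engineered so that the statistics $\Left$ and $\Up$ read off $\mv$ and $\urest$ directly, the global argument is in this instance the shorter one; your claim that the inductive version is ``cleaner'' is at least debatable.
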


\begin{proof}
Every north-strip in a TRAT has exactly one up-arrow, and every west-strip has exactly one left-arrow. If an up-arrow (resp.~left-arrow) is not in a 20-tile, it is in a 10-tile (resp.~21-tile). Following the MLQ-TRAT bijection in Section \ref{sec_bij}, by construction the left-arrows in the 20-tiles are precisely those that correspond to marked vacancies in M, and hence $\Left(T)=\mv(M)$. Marked vacancies contribute $d$ to $\wt(M)$, so the power of $d$ is the same for $\wt(T)$ and $\wt(M)$.

Recall that we call a free tile in a TRAT one that is not pointed to by (or already contains) a left-arrow. In each north-strip, the up-arrow is placed in the bottom-most free tile. Let $M$ have a 0-ball at location $j$ with dropping weight $w$. When this 0-ball is unrestricted, there is an unmarked gap to its left at the time it is occupied. Balls are dropped from right to left and north-strips of $T$ are filled from right to left: thus an unmarked gap to the left of a 0-ball implies there is a free 20-tile in north-strip $j$ above the $w$ 20-tiles containing the left-arrows. Consequently, the up-arrow is contained in a 20-tile in strip $j$, contributing a weight of $e$. On the other hand if the 0-ball is restricted, the opposite occurs, and there are no free 20-tiles in north-strip $j$. Thus the up-arrow is contained in a 10-tile, and so $\Up(T)=\urest(M)$ with the latter contributing a weight of $e$ to $\wt(M)$. Thus the power of $e$ is the same for $\wt(T)$ and $\wt(M)$, from which we can deduce that $\wt(T)=\wt(M)$.
\end{proof}

\begin{figure}[!ht]
  \centerline{\includegraphics[width=\linewidth]{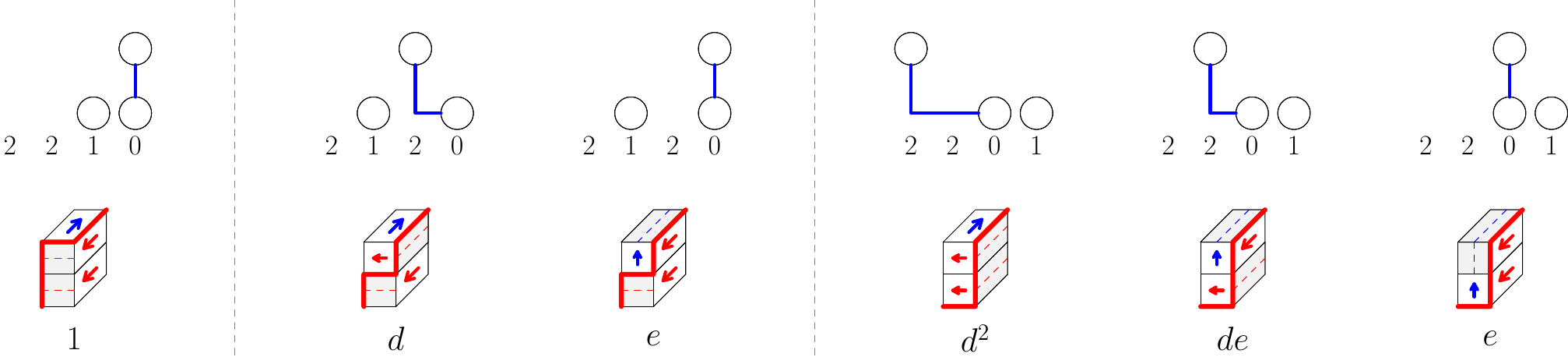}}
\centering
 \caption{All six elements of $\WMLQ(2,1,0)$ and the corresponding elements of $\TRAT(2,1,0)$ along with their respective weights (note that $t$ is omitted, since we have set it to equal 1).}\label{DDEA}
 \end{figure}

\begin{example}
In Figure \ref{DDEA} we show all multiline queues in the set $\WMLQ(2,1,1)$ and their corresponding TRAT, along with the weights. From Theorem \ref{main_result} we conclude that
\begin{align*}
\Pr(2210)&=\frac{1}{\mathcal{Z}_{2,1,1}}1\\
\Pr(2021)&=\frac{1}{\mathcal{Z}_{2,1,1}}(d+e)\\
\Pr(2201)&=\frac{1}{\mathcal{Z}_{2,1,1}}(d^2+de+e),
\end{align*}
where $\mathcal{Z}_{2,1,1}=1+d+2e+d^2+de$.
\end{example}

\begin{cor}
The TRAT Markov chain projects to the inhomogeneous TASEP when the stationary probability of a TRAT is its weight. Thus for $X$ a state of the inhomogeneous TASEP and $\mathcal{T}$ some fixed tiling of $\mathcal{H}(X)$, 
\[
 \Pr(X) \propto \sum_{T\in \TRAT_{\mathcal{T}}(X)}\wt(T).
\]
\end{cor}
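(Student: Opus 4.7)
The plan is to stitch together three results already in hand: the weight-preserving bijection of Theorem \ref{TRAT_MLQ_thm}, the MLQ weight-sum identity of Theorem \ref{main_result}, and the inhomogeneous Matrix Ansatz of Theorem \ref{inhomog_ansatz_thm}. First, I fix the canonical tiling $\mathcal{T}_X$. By Theorem \ref{TRAT_MLQ_thm}, the map $\trat$ is a weight-preserving bijection $\MLQ(X) \to \TRAT_{\mathcal{T}_X}(X)$, so
\[
\sum_{T \in \TRAT_{\mathcal{T}_X}(X)} \wt(T) \;=\; \sum_{M \in \MLQ(X)} \wt(M).
\]
Theorem \ref{main_result} identifies the right-hand side with $\tfrac{t^{k+\ell}}{d^k e^\ell}\, \tr(\DAE(X))$, and Theorem \ref{inhomog_ansatz_thm} gives $\Pr(X) \propto \tr(\DAE(X))$. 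The prefactor $\tfrac{t^{k+\ell}}{d^k e^\ell}$ depends only on the size $(k,r,\ell)$ and not on the particular state $X$, so it may be absorbed into the proportionality constant. This proves the corollary for the canonical tiling.

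Next, I need to upgrade from $\mathcal{T}_X$ to an arbitrary tiling $\mathcal{T}$ of $\mathcal{H}(X)$, i.e.\ to establish the weighted analogue of Lemma \ref{flip_lemma}. The cleanest route is via the weighted recurrences already displayed in Section \ref{sec_inhomog},
\[
t\,\wt(X'20X'') = \wt(X'2X'') + \wt(X'0X''), \quad d\,\wt(X'21X'') = \wt(X'1X''), \quad e\,\wt(X'10X'') = \wt(X'1X''),
\]
which are proved by removing a single strip adjacent to the relevant edges of $P(X)$, an operation that does not interact with the rest of the tiling. Since both $\sum_{T \in \TRAT_{\mathcal{T}}(X)} \wt(T)$ and $\sum_{T \in \TRAT_{\mathcal{T}_X}(X)} \wt(T)$ satisfy the same recurrences with identical base cases at sizes $(k,0,\ell)$, $(k,r,0)$, and $(0,r,\ell)$ (unique filling of weight $1$), induction on $|X|$ yields tiling-independence. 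An alternative is to verify directly that each of the four filling-preserving flips in Figure \ref{flip_bij} preserves the monomial $d^{\Left} e^{\Up} t^{k+\ell-\Left-\Up}$ — this is the main technical obstacle, since a flip can convert 20-tiles into 10- or 21-tiles and one must check that arrows of each type are rebalanced consistently; the recurrence argument avoids this bookkeeping.

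Finally, the first sentence of the corollary follows as an immediate consequence: any Markov chain on $\TRAT_{\mathcal{T}}(X)$ whose unique stationary distribution assigns mass $\wt(T)/Z$ to each tableau (as constructed in Section \ref{sec_markov}) projects under the type map to a chain on 2-TASEP states with stationary mass $Z^{-1}\sum_{T \in \TRAT_{\mathcal{T}}(X)} \wt(T)$, and by the identity just established this is proportional to $\Pr(X)$. Thus the projected chain has the correct stationary distribution, so it must be (or can be taken to be) the inhomogeneous 2-TASEP itself, completing the corollary.
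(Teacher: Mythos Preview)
Your proposal is correct and follows the paper's implicit argument: the paper states this corollary without proof, as an immediate consequence of Theorems \ref{main_result} and \ref{TRAT_MLQ_thm} together with the Matrix Ansatz of Theorem \ref{inhomog_ansatz_thm}, and you have filled in exactly those steps (with more care on tiling-independence than the paper itself supplies). One small caveat on your final paragraph: the clause ``so it must be (or can be taken to be) the inhomogeneous 2-TASEP itself'' slightly inverts the logic, since the projection property is a structural fact about transitions established separately in Section \ref{sec_markov} (Lemma \ref{lem1}), not something deduced from matching stationary distributions; the first sentence of the corollary is a forward reference to that construction rather than a claim to be derived here.
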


\section{The 2-TASEP with open boundaries and acyclic multiline queues}\label{sec_TASEP_open}

A nice consequence of our TRAT-MLQ bijection is that we can apply the same methods to obtain analogous results for the two-species totally asymmetric simple exclusion process (2-TASEP) with open boundaries. The 2-TASEP is a Markov chain whose states are configurations of particles of type 0, 1, and 2 on a finite lattice with open boundaries. The states are represented by words $X=X_1\ldots X_n$ with $X_i \in \{2,1,0\}$ and the possible transitions are:
\begin{itemize}
\item two adjacent particles $X_iX_{i+1}$ can swap with rate 1 if $X_i>X_{i+1}$,
\item at $X_1$, particle 0 can be replaced by particle 2 with rate $\alpha$, and
\item at $X_n$, particle 2 can be replaced by particle 0 with rate $\beta$,
\end{itemize}
where $0 \leq \alpha, \beta \leq 1$ are parameters dictating the rates of \emph{entry} and \emph{exit} of particles at the boundaries of the lattice. The number of type 1 particles is conserved. Thus we define $\TASEP(n,r)$ to be the set of 2-TASEP words of length $n$ with exactly $r$ particles of type 1.

\begin{remark}
Classically, the 2-TASEP is described as a model describing the dynamics of two species of particles, \emph{heavy} and \emph{light}, hopping left and right on a a lattice of $n$ sites, such that heavy particles can replace a vacancy at the first location, and can be replaced by a vacancy at the $n$'th location. In the bulk, any particle can swap places with a vacancy or the heavy particle can swap with an adjacent light particle on its right. In our case, the heavy particles, light particles and vacancies are represented by particles of type 2, 1, and 0 respectively. 
\end{remark}

The 2-TASEP has been studied by many including \cite{Uch08, Ari06, ALS09, DS05}. A Matrix Ansatz due to Uchiyama expresses the stationary probabilities of the 2-TASEP as a matrix product, as follows.

\begin{thm}[\cite{Uch08}]\label{uchi}
Let $D, A, E$ be matrices and $\langle w|, |v\rangle$ vectors satisfying:
\begin{align*}\label{2-TASEP_open_ansatz}
DE&=D+E&\qquad\langle w|E &= \frac{1}{\alpha}\langle w|\\
DA&=A&\qquad D|v\rangle &= \frac{1}{\beta}|v\rangle\\
AE&=A&&
\end{align*}
Let $X$ be a state of the 2-TASEP of size $(k,r,\ell)$ with open boundaries. Then the stationary probability is given by: 
\[
\Prob(X) =\frac{1}{Z_{n,r}} \alpha^k\beta^{\ell} \langle w|\DAE(X)|v\rangle,
\]
where $n=k+r+\ell$ and the partition function is $Z_{n,r}=[y^r]\langle w|(D+yA+E)^n|v\rangle$ where $[y^r]p(y)$ denotes the coefficient of $y^r$ in $p(y)$.
\end{thm}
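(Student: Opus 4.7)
The plan is to verify the stationarity master equation directly using the ansatz relations, in the spirit of the original Derrida--Evans--Hakim--Pasquier (DEHP) argument, adapted to handle the extra species. Set
\[ f(X) \;:=\; \alpha^k \beta^\ell \, \langle w|\DAE(X)|v\rangle \qquad (X \in \TASEP(n,r)). \]
Since the transition rates are all $1$ in the bulk and $\alpha, \beta$ at the boundaries, it suffices to prove that for every state $X$,
\[ \sum_{Y \,:\, Y \to X} r(Y \to X) f(Y) \;=\; \Bigl(\sum_{Y \,:\, X \to Y} r(X \to Y)\Bigr) f(X); \]
the proportionality constant $Z_{n,r}^{-1}$ then follows by summing $f$ over $\TASEP(n,r)$ and identifying the result with $[y^r]\langle w|(D + yA + E)^n|v\rangle$.

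First I would rewrite each in/out transition as a local change in the matrix product $\DAE(X)$. A bulk swap at sites $(i, i+1)$ corresponds to replacing one of the adjacent pairs $DE$, $DA$, $AE$ by $ED$, $AD$, $EA$ respectively inside $\DAE(X)$. The boundary transitions replace the leading $E$ by $D$ (with rate $\alpha$) or the trailing $D$ by $E$ (with rate $\beta$), which also shifts the exponent of $\alpha$ or $\beta$ in the prefactor by $\pm 1$. The goal is to show that, after substituting the three ansatz identities
\[ DE = D + E, \qquad DA = A, \qquad AE = A, \]
the bulk sum collapses telescopically, leaving exactly the boundary terms.

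Second, I would execute the telescoping. The identities above are tailored so that each bulk difference can be written in divergence form: for each allowed swap one identifies a ``hat'' operator $\widehat{X}_i$ at site $i$ such that the in-minus-out contribution at bond $(i, i+1)$ equals $\widehat{X}_i - \widehat{X}_{i+1}$ when viewed inside the word. Summing over $i = 1, \dots, n-1$ then gives a sum that collapses to boundary contributions at sites $1$ and $n$. At site $1$, the boundary relation $\langle w|E = \alpha^{-1}\langle w|$ converts the leftover into the entry term (the factor $\alpha$ in the rate is absorbed by the shift in the $\alpha^k$ prefactor when a $0$ becomes a $2$); symmetrically, $D|v\rangle = \beta^{-1}|v\rangle$ matches the exit contribution.

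The main obstacle is handling the coupling of the two species in the telescope: because $A$ is conserved, the hat-operators for $DA$ and $AE$ must be chosen consistently with those for $DE$, so that the divergence form is coherent across every type of adjacent pair, particularly where a $2,1$ pair sits next to a $1,0$ pair. I would treat this by splitting the word at each $1$, noting that $DA = A$ and $AE = A$ allow one to ``absorb'' the $A$'s into the neighboring $D$ or $E$ (effectively projecting the problem onto the single-species DEHP setup between consecutive $1$'s), and then checking that the single-species DEHP telescope concatenates consistently across the $A$'s. Finally, I would separately verify the degenerate boundary cases (words beginning with $2$, ending with $0$, or with fewer than one particle of some type) to confirm that the boundary terms produced by the telescope vanish exactly when no entry or exit transition is present.
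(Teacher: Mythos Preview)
The paper does not supply a proof of this theorem; it is quoted as a result of Uchiyama and followed only by an explicit matrix representation and an example. So there is nothing in the paper to compare your argument against.

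Your plan---verifying the balance equation via a DEHP-type telescope---is the standard and correct route. One simplification: you need not split the word at the $1$'s and then worry about gluing the pieces back together. Taking $\widehat D=-I$, $\widehat E=I$, $\widehat A=0$, a direct check over all nine ordered pairs $(X_i,X_{i+1})\in\{D,A,E\}^2$ shows, using only $DE=D+E$, $DA=A$, $AE=A$, that the bulk in-minus-out contribution at bond $(i,i+1)$ equals
\[
\langle w|\,X_1\cdots X_{i-1}\bigl[\widehat X_i\,X_{i+1}-X_i\,\widehat X_{i+1}\bigr]X_{i+2}\cdots X_n\,|v\rangle .
\]
Summing over $i$ collapses to $F_1-F_n$ with $F_j=\langle w|X_1\cdots\widehat X_j\cdots X_n|v\rangle$, and the two surviving terms are cancelled exactly by the boundary in/out contributions once you invoke $\langle w|E=\alpha^{-1}\langle w|$ and $D|v\rangle=\beta^{-1}|v\rangle$. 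No separate treatment of the $A$'s, and no case analysis of degenerate boundary configurations beyond the three values of $X_1$ and of $X_n$, is needed.

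A caution about the statement itself: the prefactor $\alpha^k\beta^\ell$ appears to be a transcription slip. Already for $n=1$, $r=0$ the stationary ratio is $\Pr(2)/\Pr(0)=\alpha/\beta$, whereas the displayed formula gives the ratio $\alpha\langle w|D|v\rangle$ to $\beta\langle w|E|v\rangle$, which equals $\alpha^2/\beta^2$. The telescope above closes cleanly for $f(X)=\langle w|\DAE(X)|v\rangle$ with no prefactor; if you carry the extra $\alpha^k\beta^\ell$ through as you propose, the boundary bookkeeping will not balance, because the entry and exit moves change $k$ and $\ell$.
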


A set of matrices that satisfy the conditions of the Ansatz are $D=(D_{ij})_{i,j}$, $A=(A_{ij})_{i,j}$, and $E=(E_{ij})_{i,j}$ such that:
\[
D_{ij}=\begin{cases} \alpha & j=i+1\\ 0& \mbox{otherwise}, \end{cases}
\qquad
A_{ij}=\begin{cases} \beta^i & j=0\\ 0 & \mbox{otherwise}, \end{cases}
\qquad
E_{ij}=\begin{cases} \beta^i & j=0\\ \alpha \beta^{i-j+1} & \mbox{otherwise}, \end{cases}
\]
with $\langle w| = (1,0,0,\ldots)$ and $v|\rangle = (1,1,1,\ldots)^T$.

\begin{example} For example, 
\[
\Prob(20201210)=\frac{1}{Z_{8,2}} \alpha^3\beta^{3} \langle w|DEDEADAE|v\rangle=\frac{1}{Z_{8,2}} \alpha^3\beta^3(2\alpha^3\beta^3+2\alpha^2\beta^3+\alpha\beta^3),
\] 
where $D, A, E$ and $\langle w|, |v\rangle$ are any matrices and vectors satisfying Equation \ref{uchi}, for instance those given above.
\end{example}

A tableaux solution for the stationary probabilities of the 2-TASEP with open boundaries was discovered by the author in \cite{Man16}, and shortly thereafter generalized in a joint work with Viennot in \cite{MV15} by introducing the \emph{rhombic alternative tableaux} (RAT), on which the TRAT are based. In this section we define a specialization of the RAT which corresponds to the 2-TASEP; for the general definition of a RAT, see \cite{MV15}.

A RAT of type $X$ is obtained by taking the tiled northwest portion of $\mathcal{H}(X)$ (with no restriction on $X$), and filling that region with up-arrows and left-arrows. We denote the region of $\mathcal{H}(X)$ northwest of $P(X)$ by $\Gamma(X)$, which we call a \emph{rhombic diagram}. We carry over the definition of a tiled rhombic diagram, north-strips, west-strips, up-arrows, and left-arrows from previous sections. Recall that when we say a tile is \emph{pointed at} by an up-arrow (resp.~left-arrow), that means there is an up-arrow below the tile in the same north-strip (resp.~left-arrow to the right of the tile in the same west-strip). 

\begin{defn}\label{RAT_def}
A \emph{rhombic alternative tableau} (RAT) of type $X\in\TASEP(n,r)$ is a rhombic diagram $\Gamma(X)$ with some tiling $\mathcal{T}_X$ that is filled with up-arrows and left-arrows according to the following filling rules:
\begin{itemize}
\item a tile must be empty if it is pointed at by an up-arrow or a left-arrow, and
\item if a tile is not pointed at by an arrow, it must contain an up-arrow or a left-arrow.
\end{itemize}
\end{defn}

\begin{defn}
The \emph{weight} of a RAT $R$ of size $(k,r,\ell)$ is given by
\[
\wt(R) = \alpha^{k+\#\{\mbox{up-arrows}\}}\beta^{\ell+\#\{\mbox{left-arrows}\}}.
\]
\end{defn}

For $X\in\TASEP(n,r)$, we define $RAT_{\mathcal{T}}(X)$ to be the set of RAT of type $X$ with some fixed tiling $\mathcal{T}$. We denote the set of all RAT of size $(n,r)$ by $RAT(n,r)$. More precisely,
\[
RAT(n,r)=\bigcup_{Y\in\TASEP(n,r)} RAT_{\mathcal{T}_Y}(Y)
\]
where $\{\mathcal{T}_Y\}_Y$ is some set of tilings of rhombic diagrams $\{\Gamma(Y)\}_Y$ where $Y$ ranges over all possible states of the 2-TASEP. 

The following result is Theorem 3.1 in \cite{MV15}, and is proved with the canonical Matrix Ansatz technique.

\begin{thm}[\cite{MV15}]\label{RAT_thm}
The steady state probability of state $X\in\ASEP(n,r)$ is
\[
\Prob(X)=\frac{1}{\mathcal{Z}_{n,r}}\sum_{R\in RAT_{\mathcal{T}_X}(X)}\wt(R),
\]
where $\mathcal{T}_X$ is a fixed tiling of $\Gamma(X)$ and $\mathcal{Z}_{n,r}=\sum_{R\in RAT(n,r)}\wt(R)$ is the partition function.
\end{thm}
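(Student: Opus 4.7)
The plan is to establish the identity via the canonical Matrix Ansatz technique: show by induction on $|X|$ that $f(X) := \sum_{R \in RAT_{\mathcal{T}_X}(X)} \wt(R)$ matches $\alpha^k \beta^\ell \langle w | \DAE(X) | v \rangle$ up to the usual factors, so that after normalizing by $\mathcal{Z}_{n,r}$ one recovers $\Prob(X)$. Because the statement concerns the full $\ASEP$ (which carries the bulk parameter $q$ in addition to $\alpha, \beta$), I would first enlarge the weight of Definition \ref{RAT_def} by a factor of $q^{\mathrm{free}(R)}$, where $\mathrm{free}(R)$ counts the 20-tiles containing no arrow; this is the $q$-weighted RAT of \cite{omxgv}. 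At $q = 0$ every 20-tile is forced to carry an arrow and one recovers the specialization written out in the excerpt, while for general $q$ the weight is designed to match the $q$-deformed Ansatz
\[
DE - q E D = D + E, \qquad DA - q A D = A, \qquad AE - q E A = A,
\]
together with $\langle w | E = \alpha^{-1} \langle w |$ and $D | v \rangle = \beta^{-1} | v \rangle$.

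For the inductive step, I would locate a consecutive pair in $X$ of the form $20$, $21$, or $10$ and examine the unique tile incident to the corresponding inward vertex of $P(X)$. A case analysis on the contents of that tile partitions $RAT_{\mathcal{T}_X}(X)$ according to whether it contains an up-arrow (in which case the entire north-strip emanating from it is forced empty and can be excised), a left-arrow (west-strip excised), or no arrow (only possible for a 20-tile, contributing the $q$-factor and pairing with the swapped state). Each piece bijects with an RAT on a smaller rhombic diagram of a reduced type, exactly realizing one term of the corresponding $q$-deformed bulk relation, in direct analogy with the three cases worked out in the proof of Theorem \ref{TRAT_thm}. The boundary relations are treated similarly: when the leftmost letter of $X$ is a $0$ (resp.\ the rightmost is a $2$), the strip incident to that boundary edge interacts with $\langle w |$ (resp.\ $|v\rangle$) and excising it produces the factor $\alpha^{-1}$ (resp.\ $\beta^{-1}$) together with a smaller RAT of modified type.

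The hard step is showing that the $q$-enriched weight is tiling-independent, the $q$-analogue of Lemma \ref{flip_lemma}. Each of the four elementary filling-preserving flips pictured in Figure \ref{flip_bij} must be verified to preserve $\mathrm{free}(R)$ as well as $\Left(R)$ and $\Up(R)$; since a flip only alters the three tiles inside one hexagonal region, this is a finite case check but it is the combinatorial heart of the proof. With tiling-independence in hand, the inductive base cases — states $X$ consisting only of $\{0,1\}$ or only of $\{1,2\}$ particles, which admit a unique RAT whose weight evaluates on the nose to $\alpha^k \beta^\ell$ — combine with the three bulk and two boundary recurrences above to close the induction, and normalization by $\mathcal{Z}_{n,r}$ yields the stated probability formula.
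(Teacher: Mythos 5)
Your overall strategy is the right one and is the same as the paper's: Theorem \ref{RAT_thm} is imported from \cite{omxgv}, where it is proved by exactly the canonical Matrix Ansatz induction you describe (corner-tile case analysis, strip excision, flip-invariance of the weight, boundary relations for a leading $0$ and a trailing $2$), and the paper's own proof of Theorem \ref{TRAT_thm} is the toric analogue of this argument. The detour through the $q$-deformed relations is not needed here -- the statement is at $q=0$, where the filling rules already force every non-pointed-at tile to carry an arrow -- but it is harmless and in fact faithful to what \cite{omxgv} does.

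There is, however, a concrete gap in the base case and in the normalization, and it is precisely the part of a Matrix Ansatz proof where the $\alpha,\beta$ bookkeeping lives. First, the words admitting no bulk reduction are the sorted words $0^a 1^b 2^c$, not merely the two-species words; the induction only terminates at $X=1^r$ after the boundary relations have stripped the leading $0$'s and trailing $2$'s. Second, the claimed base-case evaluation ``$\alpha^k\beta^\ell$ on the nose'' is false for the base cases you name: for $X=10$ the unique RAT is a single 10-tile forced to contain an up-arrow, so $\wt(R)=\alpha^{0+1}\beta^{1+0}=\alpha\beta$, not $\alpha^k\beta^\ell=\beta$; likewise $X=21$ gives $\alpha\beta$, not $\alpha$. (One can check against the exact stationary distribution on two sites that $\alpha\beta$ is the correct value, so it is the claimed target, not the tableau weight, that is wrong.) The underlying issue is that the identity to be proved by induction must be pinned down with a proportionality constant that is \emph{uniform} over $\TASEP(n,r)$; the correct statement is $\sum_{R}\wt(R)=(\alpha\beta)^{\,n-r}\,\langle w|\DAE(X)|v\rangle$, whose constant depends only on $n-r=k+\ell$ and hence is absorbed by $\mathcal{Z}_{n,r}$, whereas ``matches $\alpha^k\beta^\ell\langle w|\DAE(X)|v\rangle$ up to the usual factors'' leaves a discrepancy of $\alpha^{\ell}\beta^{k}$ that varies from state to state within $\TASEP(n,r)$. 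Each excision step (bulk or boundary) changes both the exponent $k+\#\{\text{up-arrows}\}$ (resp.~$\ell+\#\{\text{left-arrows}\}$) and the matrix-product side by a compensating factor of $\alpha\beta$ per removed letter of type $2$ or $0$; verifying this balance at every one of the five reduction moves, down to the true base case $1^r$ where both sides equal $1$, is what actually closes the induction.
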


In Section \ref{sec_amlq}, we introduce a new object, the \emph{acyclic multline queue} (AMLQ), which is derived from the usual multiline queue, and is in bijection with the RAT. In Section \ref{2-TASEP_open_inhomog} we generalize the 2-TASEP to an inhomogeneous process similar to the inhomogeneous TASEP on a ring of Section \ref{sec_inhomog}, and likewise generalize the RAT and the AMLQ to solve the inhomogeneous model.

\subsection{Acyclic multiline queues}\label{sec_amlq}

The connection between rhombic tableaux and multiline queues naturally extends to the open boundary case of the 2-TASEP. Following the idea of the TRAT-MLQ bijection of Section \ref{sec_bij}, we define \emph{acyclic multiline queues}, which we also call AMLQs.

\begin{defn}
 An \emph{acyclic MLQ} (AMLQ) of type $X\in\TASEP(n,r)$ is a configuration of two rows of balls on a lattice of size $2\times n$ with open boundaries. There are $\ell\leq n-r$ balls in the top row and $\ell+r$ balls in the bottom row, with the following restriction: for each $1\leq i\leq \ell+r$, the $i$'th bottom row ball (from the left) has at least $i$ top row balls weakly to its left. We denote the set of AMLQs of type $X$ by $\AMLQ(X)$, and we denote by $\AMLQ(n,r)$ the set of acyclic MLQs of size $(n,r)$: 
 \[\AMLQ(n,r)=\bigcup_{X\in\TASEP(n,r)}\AMLQ(X).\]
\end{defn}

In other words, an acyclic MLQ is an MLQ with open boundaries (which is not invariant under cyclic shifts), and in which every top row ball occupies a bottom row ball to its right. Figure \ref{not_amlq} shows an example of AMLQs, including a configuration which is \emph{not} an AMLQ.

\begin{figure}[!ht]
  \centerline{\includegraphics[width=\linewidth]{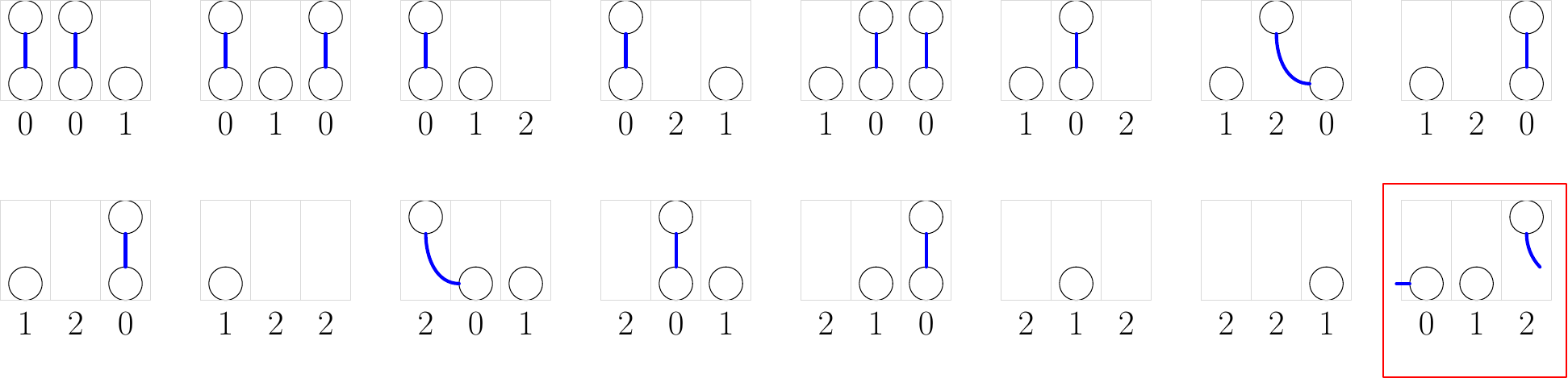}}
\centering
 \caption{All the acyclic MLQs of size $(3,1)$ are shown. The rightmost (boxed) configuration is not an AMLQ since the top row ball must wrap around to occupy the bottom row.}\label{not_amlq}
 \end{figure}

\begin{lemma}
$\AMLQ(X)$ is in bijection with $\MLQ(1X1)$.
\end{lemma}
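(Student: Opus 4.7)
The plan is to establish an explicit bijection $\phi : \AMLQ(X) \to \MLQ(1X1)$ by ``bookending'' an AMLQ with two auxiliary 1-balls. Given $A \in \AMLQ(X)$ on an open lattice of length $n$, I would define $\phi(A)$ to be the configuration on a ring of length $n+2$ obtained by inserting a bottom row 1-ball immediately to the left of $A$ (at position $1$) and another immediately to the right of $A$ (at position $n+2$), carrying the top row over unchanged.

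I would first check that $\phi(A)$ is a valid MLQ whose linear bottom-row reading, beginning at the added left 1-ball, is exactly $1X1$. The AMLQ condition is precisely that every top row ball of $A$ finds its occupying 0-ball strictly to its right in the linear order, so the cyclic ball drop algorithm applied to $\phi(A)$ produces the same assignments of 0-balls and 1-balls in the interior and leaves the two newly added 1-balls unoccupied. This is consistent with the principle noted in Section \ref{mlq_def} that a 1-ball in the leftmost bottom-row position rules out any wrap-around in the ball drop algorithm.

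For the inverse $\psi : \MLQ(1X1) \to \AMLQ(X)$, I would take a representative $M \in \MLQ(1X1)$ whose linear type beginning at position $1$ is literally $1X1$ (so that $M$ has 1-balls at positions $1$ and $n+2$), and simply delete both of these bookend 1-balls. By the same wrap-around-free property triggered by the leftmost 1-ball, every top row ball of $M$ occupies a bottom row 0-ball to its right at some position in $\{2,\dots,n+1\}$, so the restricted configuration on the open lattice $\{2,\dots,n+1\}$ satisfies the AMLQ acyclicity condition and has type $X$. By construction the maps $\phi$ and $\psi$ are mutually inverse: removal then reinsertion (or vice versa) of the two bookend 1-balls is the identity, since those two positions do not interact with any top row ball drop.

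The main subtlety I anticipate is well-definedness with respect to cyclic equivalence on the MLQ side: distinct rotations of $\phi(A)$ could in principle represent the same class in $\MLQ(1X1)$ if $1X1$ possesses nontrivial cyclic symmetry. I would resolve this by observing that the rotation pinning the bookend 1-balls to positions $1$ and $n+2$ is the unique cut around the ring that simultaneously starts with a 1-ball and yields a wrap-around-free linear MLQ of type $1X1$, which is exactly the canonical representative we have selected; hence $\phi$ and $\psi$ descend to mutually inverse bijections of cyclic equivalence classes.
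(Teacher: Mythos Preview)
Your approach is essentially identical to the paper's own proof: both append a column (top-row vacancy, bottom-row 1-ball) on each side of $A$ to obtain $A_M\in\MLQ(1X1)$, invoke the leftmost 1-ball to rule out wrap-around in the ball drop, and invert by stripping the two bookend columns. Two minor slips that do not affect the argument: ball drops land \emph{weakly} (not strictly) to the right, and you should state explicitly that the appended columns carry top-row vacancies; your treatment of the cyclic-equivalence subtlety is in fact more careful than the paper's, which simply fixes a representative reading $1X1$ without further comment.
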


\begin{proof}
Let $A\in\AMLQ(X)$. Let $A_M$ be an MLQ obtained by appending a column to the left and right of $A$, containing a vacancy in the top row and a 1-ball in the bottom row. The leftmost column of $A_M$ trivially contains a 1-ball, and since every top row ball in $A$ occupies some ball weakly to its right without wrapping around, the bottom row ball at the rightmost location of the $A_M$ must remain unoccupied; thus $A_M\in\MLQ(1X1)$. On the other hand, let $A_M\in\MLQ(1X1)$ be cyclically shifted so that its type read from left to right is $1X1$. The right-most 1-ball must remain unoccupied, so all top row balls must occupy bottom row balls without wrapping around. By chopping off the leftmost and rightmost columns of $A_M$, we get back $A\in\AMLQ(X)$.    
\end{proof}

We fix some definitions to simplify notation.
 \begin{defn}
 Let $A$ be an AMLQ. 
 \begin{itemize} 
 \item $\urest(A)$ is the number of unrestricted 0-balls in $A$.
 \item $\mv(A)$ is the number of marked vacancies in $A$.
\item $\ufree(A)$ is be the number of restricted 0-balls to the left of the leftmost 1-ball in $A$. 
\item $\lfree(A)$ is be the number of unmarked vacancies to the right of the rightmost 1-ball in $A$.
 \end{itemize}
\end{defn}

\begin{defn}\label{wt_AMLQ}
 The \emph{weight} of an acyclic MLQ $A\in AMLQ(n,r)$ is
 \[
  \wt(A) = \alpha^{n-r-\ufree(A)}\beta^{n-r-\lfree(M)}.
 \]
\end{defn}

\begin{thm}\label{amlq_main}
 Let $X$ be a state of the two-species TASEP of size $(n,r)$. Then 
 \[
  \Pr(X) = \frac{1}{\mathcal{Z}_{n,r}}\sum_{A \in \AMLQ(X)} \wt(A),
 \]
 where $\mathcal{Z}_{n,r} = \sum_{A\in \AMLQ(n,r)} \wt(A)$ is the partition function.
\end{thm}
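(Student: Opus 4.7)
The plan is to prove Theorem \ref{amlq_main} by constructing a weight-preserving bijection $\Phi\colon \AMLQ(X) \to \RAT_{\mathcal{T}_X}(X)$ for a canonical tiling $\mathcal{T}_X$ of $\Gamma(X)$, and then invoking Theorem \ref{RAT_thm}. This parallels the strategy used in the periodic case to establish Theorem \ref{TRAT_MLQ_thm}: the ball-drop algorithm on an AMLQ, performed from right to left, yields for each bottom-row 0-ball at location $x_i$ a dropping weight $w_i$, and these weights will dictate the arrow structure of the corresponding RAT.

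More specifically, given $A\in\AMLQ(X)$ with bottom-row 0-balls at locations $x_1<\cdots<x_\ell$ and dropping weights $w_1,\ldots,w_\ell$, I would define $\Phi(A)$ to be the RAT on $\Gamma(X)$ with tiling $\mathcal{T}_X$ in which the north-strip above the 0-edge at location $x_i$ contains $w_i$ left-arrows in its lowest 20-tiles, followed by an up-arrow in the first free tile above them. The map $\Phi$ is well-defined and invertible (via ball lifts, as in Definition \ref{ball_lift}) by an open-boundary analogue of Lemma \ref{MLQ_technical}; moreover, the $X$-consistency condition for $(w_1,\ldots,w_\ell)$ holds automatically because, unlike an ordinary MLQ, an AMLQ is defined precisely so that no top-row ball wraps around.

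Weight preservation is where the proof requires the most care. We must show
\[
\alpha^{k+\#\{\text{up-arrows in }\Phi(A)\}}\beta^{\ell+\#\{\text{left-arrows in }\Phi(A)\}} = \alpha^{n-r-\ufree(A)}\beta^{n-r-\lfree(A)}.
\]
The $\alpha$-exponent equality reduces to the identity $\#\{\text{up-arrows}\} = \ell - \ufree(A)$, which I would verify by the same argument as in Theorem \ref{TRAT_MLQ_thm}: an up-arrow in $\Phi(A)$ sits in a 20-tile precisely when the corresponding 0-ball of $A$ is unrestricted, and the remaining up-arrows are distributed among the north-strips whose 10-tile below the path is their first free tile. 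A symmetric analysis handles the $\beta$-exponent via $\lfree(A)$ and left-arrows in the west-strips situated past the rightmost 1-ball of $X$. Once $\wt(\Phi(A))=\wt(A)$ is established, summing over $A\in\AMLQ(X)$ and applying Theorem \ref{RAT_thm} immediately yields the formula, with $\mathcal{Z}_{n,r}=\sum_{A\in\AMLQ(n,r)}\wt(A)$ matching $\sum_{R\in\RAT(n,r)}\wt(R)$ under the bijection.

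The main obstacle will be the boundary bookkeeping. In the periodic setting every north- and west-strip necessarily contains exactly one arrow of each type because the strips wrap around the torus; in the open-boundary RAT this fails, and strips lying entirely in the boundary region of $\Gamma(X)$ may contain no arrows at all, with the constraint being satisfied by their tiles being pointed at from neighboring strips. These arrow-free strips are precisely what is tracked by $\ufree(A)$ (restricted 0-balls left of every 1-ball, so their ball drops mark nothing) and $\lfree(A)$ (unmarked vacancies right of every 1-ball, which are never reached by a drop). Pinning down this geometric/combinatorial dictionary between the boundary of $\Gamma(X)$ and the extremal regions of $A$ is the crux; everything else follows the template of Section \ref{sec_bij}.
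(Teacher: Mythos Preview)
Your overall strategy matches the paper's: construct a weight-preserving bijection from $\AMLQ(X)$ to $\RAT(X)$ and then invoke Theorem~\ref{RAT_thm}. The difference is in how the bijection is built. The paper does not construct it directly on $\Gamma(X)$; instead it pads $A\in\AMLQ(X)$ to $A_M\in\MLQ(1X1)$ by appending a 1-ball column at each end, applies the already-established map $\trat$ to obtain a TRAT of type $1X1$, flips so that the two new diagonal strips lie along the northwest boundary of the hexagon, observes that the region southeast of $P(1X1)$ carries no arrows (so the TRAT is determined by its northwest half, a RAT of type $1X1$), and finally chops off the two boundary diagonal strips to land in $\RAT(X)$. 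The payoff of this detour is precisely to sidestep the ``main obstacle'' you identify: after padding by $1X1$, every north- and west-strip of the TRAT \emph{does} contain exactly one arrow, and the arrows absorbed by the two extra diagonal strips are seen to count exactly $\ufree$ and $\lfree$, making the weight comparison $\wt(R)=\wt(A)$ a one-line computation.

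Your direct route should also work, but two points in your sketch need repair. First, the phrase ``the remaining up-arrows are distributed among the north-strips whose 10-tile below the path is their first free tile'' is a TRAT notion; in a RAT there are no tiles below $P(X)$, so you must argue directly which north-strips of $\Gamma(X)$ end up arrow-free under your filling rule, and this is exactly the boundary bookkeeping the paper's $1X1$ trick avoids. Second, your gloss on $\ufree(A)$ as ``restricted 0-balls left of every 1-ball, so their ball drops mark nothing'' misdescribes the condition: a restricted 0-ball may well mark vacancies during its drop; what matters is that \emph{after} its drop no unmarked vacancy remains to its left. The correct correspondence (which the paper reads off from the extra diagonal strips) is that a north-strip of $\Phi(A)$ is free of an up-arrow exactly when the corresponding 0-ball lies left of the leftmost 1-ball and is restricted, and dually for $\lfree$.
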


In Figure \ref{not_amlq}, the $\alpha,\beta$ weights of all AMLQs of size $(3,1)$ are given (after setting the other variables to equal 1).

The proof of our theorem is through a weight-preserving bijection of acyclic MLQs with RAT. Let us denote the set of fillings of $\Gamma(X)$ with tiling $\mathcal{T}$ by $\RAT_{\mathcal{T}}(X)$. 

\begin{figure}[!ht]
  \centerline{\includegraphics[width=\linewidth]{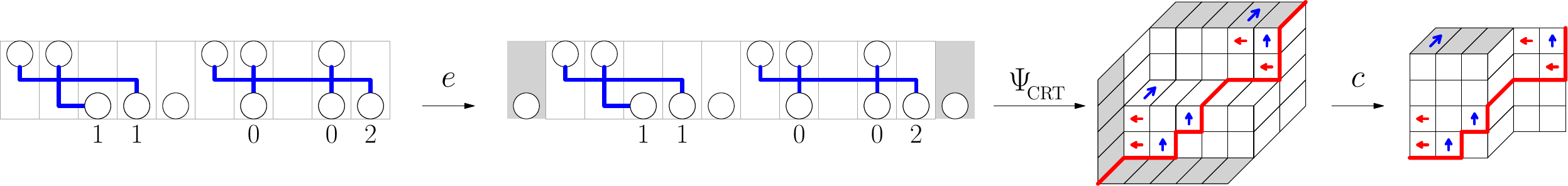}}
\centering
 \caption{Let $X=220012020010202$. From left to right, we have: $A\in\AMLQ(X) \longleftrightarrow A_M\in\MLQ(1X1) \longleftrightarrow \trat(A_M)\in\TRAT_{\mathcal{T}_X}(1X1) \longleftrightarrow \rat(A)\in\RAT(X)$. The highlighted columns in $A_M$ which are appended to $A$ correspond to the highlighted diagonal strips in $\trat(A_M)$ which are subsequently removed to obtain $\rat(A)$.}\label{amlq_bij}
 \end{figure}

\begin{proof}
Let $A\in\AMLQ(X)$ for $X\in\TASEP(n,r)$, and let $X'=1X1\in \TASEP(n+2,r+2)$. Let $A_M\in\MLQ(1X1)$ be the MLQ obtained by appending a column containing a 1-ball in the bottom row to the left and right of $A$. Let $T=\trat(A_M)$ be the TRAT obtained by applying the ball drop algorithm to $A_M$. Now apply flips to the tiling of $T$ until the rightmost (resp.~leftmost) diagonal strip consists of a row of $\ell$ adjacent 10-tiles (resp.~$k$ adjacent 21-tiles), obtaining tiling $\mathcal{T}$ in which the leftmost and rightmost diagonal strips are aligned with the north and west boundaries of $T$.

We define $T_U$ to be the rhombic tableau obtained by taking the region of $T$ northwest of $P(X)$. $T_U$ satisfies the rules of Definition \ref{RAT_def}, and so $T_U\in\RAT_\mathcal{T}(1X1)$. We claim that the region of $T$ southeast of the path $P(X)$ contains no arrows. Suppose a north-strip of $T$ has its top-most tile, which is a 10-tile contained in the rightmost diagonal strip, free. Then that 10-tile will necessarily contain an up-arrow. Similarly, suppose a west-strip of $T$ has its left-most tile, which is a 21-tile contained in the leftmost diagonal strip, free. Then that 21-tile will necessarily contain a left-arrow. Thus every arrow in $T$ is contained in $T_U$, and so a TRAT $T\in \TRAT_{\mathcal{T}_X}(1X1)$ can be uniquely recreated from a rhombic tableau $T_U\in \RAT(1X1)$. Consequently, this map is a bijection.

Now define $R$ to be the tableau obtained by chopping off the rightmost and leftmost diagonal strips of $T_U$: recall that with the tiling $\mathcal{T}$, these strips are bordering the northwest boundary of $T_U$, so chopping them off results in a proper rhombic diagram of type $X$ with a filling with up-arrows and left-arrows that still satisfy Definition \ref{RAT_def}; thus $R\in\RAT_{\mathcal{T}}(X)$. Moreover, $T_U$ can be recreated from $R$ by re-attaching the external diagonal strips and placing a up-arrow in the topmost tile of every north-strip that is free of an up-arrow, and a left-arrow in the leftmost tile of every west-strip that is free of a left-arrow. We call $\rat(A)=R$. Thus 
\[
\rat: \AMLQ(X) \rightarrow \RAT_\mathcal{T}(X)
\] 
is a bijection. 

Define $\ufree(R)$ to be the number of north-strips that are free of up-arrows and $\lfree(R)$ to be the number of west-strips that are free of left-arrows. Then 
\begin{align*}
\wt(R)&=\alpha^k\beta^{\ell}\alpha^{\#\ \mbox{up-arrows}}\beta^{\#\ \mbox{left-arrows}}\\
&=\alpha^{n-r-\ufree(R)}\beta^{n-r-\lfree(R)}.
\end{align*}

By our construction, $\ufree(R)$ (resp.~$\lfree(R)$) is the number of tiles containing an up-arrow in the rightmost diagonal strip (resp.~left-arrow in the leftmost diagonal strip) of $R$. By following the ball drop algorithm, we see that an up-arrow in the rightmost diagonal strip precisely corresponds to the unmarked vacancies left of the leftmost 1-ball in $A$, and a left-arrow in the leftmost diagonal strip precisely corresponds to the restricted 0-balls right of the rightmost 1-ball in $A$. Thus $\ufree(R)=\ufree(A)$ and $\lfree(R)=\lfree(A)$ from Definition \ref{wt_AMLQ}.

For $X\in\TASEP(n,r)$, let $A\in\AMLQ(X)$ and $R=\rat(A)\in\RAT(X)$. By the above $\wt(R)=\wt(A)$, and so with Theorem \ref{RAT_thm}, we obtain
\[\Pr(X)=\frac{1}{\mathcal{Z}_{n,r}}\sum_{R\in\RAT_{\mathcal{T}}(X)} \wt(R) = \frac{1}{\mathcal{Z}_{n,r}}\sum_{A\in\AMLQ(X)} \wt(A),\]
as desired, where $\mathcal{Z}_{n,r}=\sum_{A\in\AMLQ(n,r)}\wt(A)$.
\end{proof}

\subsection{Combinatorics of the inhomogeneous 2-TASEP with open boundaries}\label{2-TASEP_open_inhomog}

In the inhomogeneous 2-ASEP, swaps of different species of particles occur at different rates which depend on both particles involved in the swap, analogous to the inhomogeneous generalization in Section \ref{sec_inhomog}. Let $X\in\TASEP(n,r)$ represent a state of the 2-ASEP. The transitions on the inhomogeneous 2-ASEP Markov chain are:
\begin{align*}
X'20X''&  \mathrel{\mathop{\rightarrow}^{\mathrm{t}}}  X'02X''&\qquad 0X' & \mathrel{\mathop{\rightarrow}^{\mathrm{\alpha}}} 2X'\\
X'21X''& \mathrel{\mathop{\rightarrow}^{\mathrm{d}}}  X'12X''&\qquad X'2 & \mathrel{\mathop{\rightarrow}^{\mathrm{\beta}}} X'0\\
X'10X''&  \mathrel{\mathop{\rightarrow}^{\mathrm{e}}}  X'01X''&&
\end{align*}
where $0 \leq t, d, e, \alpha, \beta \leq 1$ are parameters describing the hopping rates. (When $t=d=e=1$, we recover the usual 2-TASEP.)

The following Matrix Ansatz is the inhomogeneous modification of the canonical Derrida-Evans-Hakim-Pasquier Matrix Ansatz \cite{DEHP} and the two-species Matrix Ansatz, studied by \cite{Ari06, Uch08}.

\begin{thm}\label{inhomog_ASEP_ansatz_thm}
Let $D, A, E$ be matrices and $\langle w|, |v\rangle$ vectors satisfying:
\begin{align}\label{inhomog_ASEP_ansatz}
tDE&=D+E\qquad & \langle w|E &= \frac{1}{\alpha}\langle w|\\ \nonumber
dDA&=A\qquad & D|v\rangle &= \frac{1}{\beta}|v\rangle\\ \nonumber
eAE&=A
\end{align}
then the stationary probability of state $X$ of the inhomogeneous 2-TASEP with open boundaries of size $(n,r)$ is given by

\[
\frac{1}{Z_{n,r}}\langle w|\DAE(X)|v\rangle,
\]
where $Z_{n,r}=[y^r]\langle w| (D+yA+E)^n |v\rangle$.
\end{thm}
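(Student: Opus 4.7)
The proof plan follows the classical Matrix Ansatz technique of Derrida--Evans--Hakim--Pasquier \cite{DEHP} for the single-species ASEP, extended to two species by Uchiyama \cite{uchiyama} (Theorem~\ref{uchi}), now adapted to the inhomogeneous rates $t,d,e,\alpha,\beta$. Set
\[
f(X) := \langle w|\DAE(X)|v\rangle
\]
for each state $X\in\TASEP(n,r)$. The goal is to verify that $(f(X))_X$ is a stationary measure for the Markov chain, i.e.\ that
\[
\sum_{Y} q(Y\to X)\, f(Y) \;=\; f(X) \sum_{Y} q(X\to Y)
\]
holds for every $X$, where $q$ denotes the transition rates of the inhomogeneous 2-TASEP. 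Normalizing by the partition function $Z_{n,r}$ then yields the formula claimed in the theorem.

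The standard trick is to introduce auxiliary ``hat'' matrices $\hat D, \hat A, \hat E$ so that each Ansatz relation yields a telescoping ``local current.'' Schematically, one seeks identities of the shape
\[
tDE - D - E \;=\; \hat D E - D\hat E, \qquad dDA - A \;=\; \hat D A - D\hat A, \qquad eAE - A \;=\; \hat A E - A\hat E,
\]
together with boundary compatibilities matching $\alpha\langle w|E = \langle w|$ and $\beta D|v\rangle = |v\rangle$. Since the left-hand sides of the bulk identities vanish by the Ansatz, the hat matrices can be taken to be multiples of the identity. When one then computes $\sum_Y q(Y\to X)f(Y) - f(X)\sum_Y q(X\to Y)$ by inserting these local-current identities at every position where a bulk rearrangement $20\to 02$, $21\to 12$, or $10\to 01$ contributes, the interior terms cancel pairwise between neighboring sites. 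The remaining surface contributions at position $1$ and position $n$ are exactly absorbed by the $\alpha$-term (for $0X'\to 2X'$ at the left) and the $\beta$-term (for $X'2\to X'0$ at the right), using the boundary equations on $\langle w|$ and $|v\rangle$.

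The partition function formula emerges simply by observing that
\[
\sum_{X\in\TASEP(n,r)} f(X) \;=\; [y^r]\,\langle w|(D+yA+E)^n|v\rangle,
\]
since expanding $(D+yA+E)^n$ enumerates exactly the words $\DAE(X)$ for all length-$n$ states, and the coefficient of $y^r$ selects those with precisely $r$ copies of $A$. After inserting the $\alpha^k \beta^{\ell}$ factors implicit in $f(X)$ via the boundary relations, this gives the stated $Z_{n,r}$.

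The main technical obstacle is exhibiting hat matrices $\hat D, \hat A, \hat E$ compatible simultaneously with all three bulk relations of (\ref{inhomog_ASEP_ansatz}) and with both boundary vectors in the inhomogeneous setting. However, the inhomogeneous bulk relations are obtained from the homogeneous Uchiyama relations by a rescaling of $D, A, E$ (as witnessed by the existence of explicit representations analogous to $D^*,A^*,E^*$ of Section~\ref{sec_inhomog}), so the hat matrices from the homogeneous proof transport to our setting after absorbing the parameters $t,d,e$. Once the hat matrices are chosen, the telescoping verification is a careful but otherwise routine algebraic bookkeeping, and no qualitatively new difficulty arises beyond the homogeneous case already treated in \cite{uchiyama}.
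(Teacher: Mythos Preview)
The paper does not actually prove this theorem; it is stated without proof as ``the inhomogeneous modification of the canonical Derrida--Evans--Hakim--Pasquier Matrix Ansatz and the two-species Matrix Ansatz,'' with the argument left implicit in the cited references. Your hat-matrix telescoping sketch is exactly the standard route for such statements and is correct in outline.

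One correction is warranted. Your claim that the inhomogeneous bulk relations reduce to Uchiyama's homogeneous ones by a rescaling of $D,A,E$ is false in general: writing $D'=aD$, $E'=cE$, the relation $tD'E'=D'+E'$ forces $a=c=1/t$, while $dD'A'=A'$ forces $a=1/d$, so the rescaling only works when $t=d$ (and similarly $t=e$). This does not actually damage your argument, because the hat matrices can be exhibited directly without any reduction to the homogeneous case. Taking the scalars $\hat D=-I$, $\hat A=0$, $\hat E=I$, the three bulk relations $tDE=D+E$, $dDA=A$, $eAE=A$ are precisely the identities needed for the bond-by-bond contributions to the balance equation to telescope, and the boundary relations $\alpha\langle w|E=\langle w|$ and $\beta D|v\rangle=|v\rangle$ cancel the leftover surface terms at sites $1$ and $n$ in every case ($X_1\in\{0,1,2\}$ and $X_n\in\{0,1,2\}$ separately). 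With that adjustment your verification goes through.
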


We can define weighted RAT and weighted acyclic MLQs in the same way that we define weighted TRAT and weighted MLQs, respectively. As before, we fix $t=1$ by normalizing over all parameters. 

Recall that for a TRAT $R$, we denote by $\Up(R)$ and $\Left(R)$ the number of 20-tiles containing an up-arrow and a left-arrow, respectively. We carry over this definition for the RAT. Also recall that $\ufree$ is the number of north-strips not containing an up-arrow, and $\lfree$ is the number of west-strips not containing a west-arrow.

\begin{defn}\label{trat_weight_inhomog}
Let $\wt(R)$ be the $\alpha,\beta$ weight of a RAT $R$. The \emph{enhanced weight} $\wt_e(R)$ is defined as follows:
\begin{align*}
\wt_e(R) &= \wt(R)d^{\Left(R)+\lfree-k}e^{\Up(R)+\ufree-\ell}\\
&=\alpha^{n-r-\ufree(R)}\beta^{n-r-\lfree(R)}d^{\Left(R)+\lfree-k}e^{\Up(R)+\ufree-\ell}.
\end{align*}
In other words, the power of $d$ is $\Left(R)$ minus the total number of left-arrows, which is the same as $d^{-1}$ to the power of the number of \emph{left-arrows contained in 21-tiles}. Similarly, the power of $e$ is $\Up(R)$ minus the total number of up-arrows, which is the same as $e^{-1}$ to the power of the number of \emph{up-arrows contained in 10-tiles}.
\end{defn}

For example, in Figure \ref{amlq_bij}, we obtain $\wt(R)=\alpha^{12}\beta^{12}$ since $\lfree(R)=\ufree(R)=1$, and $\wt_e(R)=\wt(R)d^5e^2$ since $\Left(R)=5$ and $\Up(R)=2$.

Equivalence of tilings is required for the weighted RAT to be well-defined. For this we invoke Lemma \ref{flip_lemma}. Note that every filling of a hexagonal configuration of three tiles contributes weight 1 since such configurations can never have 20-tiles that contain arrows (see Figure \ref{flip_bij}). Thus $\sum_{R\in \RAT_{\mathcal{T}}(X)} \wt_e(R)=\sum_{R\in \RAT_{\mathcal{T}'}(X)} \wt_e(R)$ for any two tilings $\mathcal{T}$ and $\mathcal{T}'$, permitting the following definition.

\begin{defn}
 We define the weight of a state $X\in\ASEP(n,r)$ to be the weight generating function of all RAT of type $X$ with some fixed tiling $\mathcal{T}$:
 \[
 \weight(X)=\sum_{R\in \RAT_{\mathcal{T}}(X)} \wt_e(R).
 \]
 \end{defn}

\begin{thm}\label{RAT_weighted_thm}
 Let $X\in\ASEP(n,r)$ be a state of the inhomogeneous two-species TASEP with open boundaries and parameters $\alpha$, $\beta$, $t=1$, $d$, $e$ governing the transition rates. The stationary probability of state $X$ is
 \[
  \Prob(X) = \frac{1}{\mathcal{Z}_{n,r}} \sum_{R \in \RAT_{\mathcal{T}}(X)} \wt_e(R).
 \]
 for some fixed tiling $\mathcal{T}$, and where $\mathcal{Z}_{n,r}=\sum_{R\in\RAT(n,r)}$ is the partition function.
 \end{thm}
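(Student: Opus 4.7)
The plan is a Matrix Ansatz argument parallel to the proof of Theorem~\ref{TRAT_thm} above, and to the proof of Theorem~3.1 in \cite{omxgv}, but carried out with the enhanced weight $\wt_e$ and with two additional boundary recurrences. Concretely, I would show by induction on $|X|$ that $\weight(X)=\sum_{R\in\RAT_{\mathcal{T}}(X)}\wt_e(R)$ satisfies the five recurrences obtained by translating the Matrix Ansatz relations of Theorem~\ref{inhomog_ASEP_ansatz_thm}, namely
\begin{align*}
t\cdot\weight(X'20X'') &= \weight(X'2X'') + \weight(X'0X''),\\
d\cdot\weight(X'21X'') &= \weight(X'1X''),\\
e\cdot\weight(X'10X'') &= \weight(X'1X''),\\
\alpha\cdot\weight(0X') &= \weight(X'),\\
\beta\cdot\weight(X'2) &= \weight(X'),
\end{align*}
together with a trivial base case, for example the unique-filling small cases where $X$ contains no 2 or no 0. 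These recurrences and the base case uniquely determine the unnormalized probabilities up to an overall scalar, so verifying them establishes the theorem.

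To handle each bulk recurrence, I would first extend Lemma~\ref{flip_lemma} to the enhanced weight by checking that every filling of each of the four hexagonal configurations of Figure~\ref{flip_bij} contributes $1$ to $\wt_e$, ensuring that $\weight(X)$ is well-defined independently of the chosen tiling. I would then fix the canonical tiling $\mathcal{T}_X$ and examine the tile of $\Gamma(X)$ adjacent to the relevant pair of edges of $P(X)$, exactly as in the proof of Theorem~\ref{TRAT_thm}. In the 20-case the adjacent 20-tile must contain either a left-arrow (contributing a factor of $d$ to $\wt_e$ and forcing the remaining tiles of its west-strip empty) or an up-arrow (contributing $e$ and forcing its north-strip empty); deleting the resulting empty strip gives a weight-preserving bijection with RATs of type $X'0X''$ or $X'2X''$. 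The 21-case and 10-case are similar but admit only one kind of arrow in the relevant 21-tile or 10-tile. For the boundary recurrences, appending a 0 to the left end of $X'$ (resp.\ a 2 to the right end) introduces a single new outer strip along the NW boundary of $\Gamma$, whose possible arrow placements contribute precisely the factor of $\alpha$ (resp.\ $\beta$) that the Ansatz demands.

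The main obstacle, in my view, is the bookkeeping of the enhanced weight under strip removal. Unlike the plain $\wt(R)=\alpha^{k+\#\text{up-arrows}}\beta^{\ell+\#\text{left-arrows}}$, which depends only on global counts, the exponents of $d$ and $e$ in $\wt_e$ involve the 20-tile-specific counts $\Left(R)$ and $\Up(R)$ combined with $k$, $\ell$, $\ufree(R)$, $\lfree(R)$. When a west-strip or north-strip is deleted, all of $k$ (or $\ell$), $\lfree$ (or $\ufree$), and possibly $\Left(R)$ or $\Up(R)$ change simultaneously, and one must verify that the net effect on $\wt_e$ is exactly the factor of $t$, $d$, or $e$ prescribed by the recurrence. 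Once this accounting is carried out in the 20-case, the same method should apply with only cosmetic changes to the remaining cases, so I expect the rest of the argument to be routine.
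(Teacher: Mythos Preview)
Your proposal is correct and follows essentially the same approach as the paper: the paper's proof simply invokes the Matrix Ansatz strip-removal argument of Theorem~3.1 in \cite{omxgv}, augmented by tracking the $d$ and $e$ contributions at the 20-corner tile, and explicitly leaves the details to the reader. Your outline is in fact more detailed than what the paper provides, and the bookkeeping issue you flag as the main obstacle is exactly the content that the paper omits.
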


To prove this result, we recall the Matrix Ansatz proof for the RAT in \cite{MV15}, except that when we apply the recurrence relation given by the Matrix Ansatz at the 2, 0 corner corresponding to $X=X'20X''$, we include the $d$ and $e$ weights on the contents of that 20-corner tile. We obtain relations identical to those obtained in the proof of Theorem \ref{main_result} for the weighted MLQs. We leave it to the reader to fill in the details.

Following our bijection with RAT, we obtain an enhanced weight $\wt_e$ for the weighted AMLQs which is a monomial in $\alpha,\beta,d,e$. The fact that our bijection between $\RAT(n,r)$ and $\AMLQ(n,r)$ is weight preserving follows immediately from the fact that the TRAT-MLQ bijection is weight preserving. Consequently, we obtain a formula for stationary probabilities of the inhomogeneous 2-TASEP with open boundaries in terms of AMLQs.

\begin{defn}
 Let $A\in\AMLQ(n,r)$, and let $\wt(A)$ be the $\alpha,\beta$ weight of $A$ from Definition \ref{wt_AMLQ}. Define the enhanced weight of $A$ to be
 \begin{align*}
  \wt_e(A)&=\wt(A) d^{\mv(A)+\lfree(A)-k} e^{\urest(A)+\ufree(A)-\ell} \\
  &=\alpha^{n-r-\ufree(A)}\beta^{n-r-\lfree(A)}d^{\mv(A)+\lfree(A)-k}e^{\urest(A)+\ufree(A)-\ell} .
 \end{align*}
\end{defn}

\begin{example}
In Figure \ref{amlq_bij}, we have $A\in\AMLQ(X)$ on the left and $\rat(A)\in\RAT(X)$ on the right for $X=220012020010202\in\ASEP(15,2)$. For both, $\ufree(A)=\ufree(\rat(A))=1$ and $\lfree(A)=\lfree(\rat(A))=1$. Moreover, $\mv(A)=\Left(\rat(A))=5$ and $\urest(A)=\Up(\rat(A))=2$, and so both objects have $\wt(A)=\wt(\rat(A))=\alpha^{13-1}\beta^{13-1} = (\alpha\beta)^{12}$ and enhanced weight $\wt_e(A)=\wt_e( \rat(A))=(\alpha\beta)^{12}d^5e^2$.

For another example, see Figure \ref{not_amlq}, in which the weights of all elements of $\AMLQ(3,1)$ are given.
\end{example}

 \begin{cor}\label{AMLQ_weighted_thm}
 Let $X\in\ASEP(n,r)$ be a state of the inhomogeneous TASEP with open boundaries and parameters $\alpha$, $\beta$, $t=1$, $d$, $e$ governing the rates of transitions. The stationary probability of state $X$ is
 \[
 \Prob(X)=\frac{1}{\mathcal{Z}_{n,r}} \sum_{A \in\AMLQ(X)} \wt_e(A),
 \]
 where $\mathcal{Z}_{n,r}=\sum_{A\in\AMLQ(n,r)}\wt_e(A)$ is the partition function.
 \end{cor}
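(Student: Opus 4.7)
The plan is to deduce the corollary by combining Theorem \ref{RAT_weighted_thm} with a weight-preserving version of the bijection $\rat \colon \AMLQ(X) \to \RAT_{\mathcal{T}}(X)$ already constructed in the proof of Theorem \ref{amlq_main}. Concretely, it suffices to show that for every $A\in\AMLQ(X)$ one has $\wt_e(A)=\wt_e(\rat(A))$, because then
\[
\sum_{A\in\AMLQ(X)}\wt_e(A) \;=\; \sum_{R\in\RAT_{\mathcal{T}}(X)}\wt_e(R),
\]
and the right-hand side equals $\mathcal{Z}_{n,r}\Prob(X)$ by Theorem \ref{RAT_weighted_thm}. The same equality applied to the full sets $\AMLQ(n,r)$ and $\RAT(n,r)$ will identify the two partition functions.

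First I would unwind the bijection $\rat$ exactly as in the proof of Theorem \ref{amlq_main}: given $A\in\AMLQ(X)$, form $A_M\in\MLQ(1X1)$ by padding with 1-ball columns, apply the TRAT bijection to get $T=\trat(A_M)\in\TRAT_{\mathcal{T}_X}(1X1)$, flip until the outer diagonal strips align with the NW boundary, restrict to $T_U$ northwest of $P(1X1)$, and then chop off the two outermost diagonal strips to obtain $R=\rat(A)$. The $\alpha,\beta$-parts of the weight match by the argument already given in that proof, namely $\ufree(A)=\ufree(R)$ and $\lfree(A)=\lfree(R)$.

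Next I would track the $d,e$-part. Theorem \ref{TRAT_MLQ_thm} says that the TRAT-MLQ bijection preserves $d$ and $e$ statistics, i.e.\ $\Left(T)=\mv(A_M)=\mv(A)$ and $\Up(T)=\urest(A_M)=\urest(A)$. Since the two outermost diagonal strips added to make $A_M$ consist only of 10-tiles on the right and 21-tiles on the left (after the flips), no 20-tile lies inside these strips, so no $d$- or $e$-counting arrow is lost when passing from $T_U$ to $R$. Therefore $\Left(R)=\mv(A)$ and $\Up(R)=\urest(A)$. Arrows in $R$ that lie in 21- or 10-tiles (i.e.\ contribute $d^{-1}$ or $e^{-1}$ in $\wt_e(R)$) correspond, under the bijection, precisely to restricted 0-balls to the right of the rightmost 1-ball and to unmarked vacancies to the left of the leftmost 1-ball, since those are exactly the ``boundary'' configurations captured by $\lfree(A)$ and $\ufree(A)$. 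Plugging into Definition \ref{trat_weight_inhomog} gives
\[
\wt_e(R)=\alpha^{n-r-\ufree(R)}\beta^{n-r-\lfree(R)}d^{\,\mv(A)+\lfree(A)-k}e^{\,\urest(A)+\ufree(A)-\ell},
\]
which coincides with $\wt_e(A)$ by definition.

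The only delicate step is the bookkeeping in the previous paragraph: I must be careful that the flips used to bring $T$ into a tiling whose outer strips align with the NW boundary do not change the $d,e$-statistics (which follows from Lemma \ref{flip_lemma} and the fact that hexagonal-flip configurations contain no 20-tile carrying an arrow), and that the boundary 10- and 21-strips, once removed, account exactly for the $\lfree$ and $\ufree$ discrepancies appearing in the enhanced weight. Apart from this accounting, everything is assembled from results already proved: Theorem \ref{RAT_weighted_thm}, Theorem \ref{amlq_main}, Theorem \ref{TRAT_MLQ_thm}, and Lemma \ref{flip_lemma}.
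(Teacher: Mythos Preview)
Your proposal is correct and follows essentially the same approach as the paper. The paper does not give a detailed proof of this corollary; it simply asserts that ``the fact that our bijection between $\RAT(n,r)$ and $\AMLQ(n,r)$ is weight preserving follows immediately from the fact that the TRAT--MLQ bijection is weight preserving,'' and then states the corollary. Your proof plan supplies exactly the bookkeeping the paper omits: padding $A$ to $A_M$, invoking Theorem~\ref{TRAT_MLQ_thm} to get $\Left(T)=\mv(A)$ and $\Up(T)=\urest(A)$, noting that the chopped outer diagonal strips contain no 20-tiles so these statistics pass unchanged to $R$, and combining with the $\ufree$/$\lfree$ identifications already established in the proof of Theorem~\ref{amlq_main}. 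One minor slip: in your explanatory aside about arrows in 21- and 10-tiles, you have the ``left of leftmost 1-ball'' and ``right of rightmost 1-ball'' descriptions swapped relative to the definitions of $\ufree(A)$ and $\lfree(A)$; this sentence is not needed for the argument, since the four equalities $\Left(R)=\mv(A)$, $\Up(R)=\urest(A)$, $\lfree(R)=\lfree(A)$, $\ufree(R)=\ufree(A)$ already suffice to match the two enhanced-weight formulas term by term.
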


 \section{Markov chains that project to the 2-TASEP}\label{sec_markov}

The structure of the toric rhombic tableaux yields a natural Markov chain on these tableaux that projects to the two-species TASEP on a ring, in the flavor of the Markov chain on the rhombic alternative tableaux projecting to the two-species ASEP in \cite{Man15a}, which in turn generalized the Markov chain on the alternative tableaux in \cite{CW07a}. As a result, we also obtain a Markov chain on the two-species multiline queues by following the bijection of Section \ref{sec_bij}, which we describe in Section \ref{sec_MLQ_markov}. These Markov chains extend naturally to the inhomogeneous TRAT and weighted multiline queues.

By projection of Markov chains, we mean the following definition, which is precisely Definition 3.20 from \cite{CW07a}. 

\begin{defn}
Let $M$ and $N$ be Markov chains on finite sets $X$ and $Y$, and let $f$ be a surjective map from $X$ to $Y$. We say that $M$ \emph{projects} to $N$ if the following properties hold:
\begin{itemize}
\item If $x_1, x_2 \in X$ with $Prob_M(x_1 \rightarrow x_2) > 0$, then $Prob_M(x_1 \rightarrow x_2) = Prob_N(f(x_1) \rightarrow f(x_2))$.
\item If $y_1$ and $y_2$ are in $Y$ and $Prob_N(y_1 \rightarrow y_2)>0$, then for each $x_1 \in X$ such that $f(x_1) = y_1$, there is a unique $x_2 \in X$ such that $f(x_2) = y_2$ and $Prob_M (x_1 \rightarrow x_2) > 0$; moreover, $Prob_M (x_1 \rightarrow x_2) = Prob_N (y_1 \rightarrow y_2)$.
\end{itemize}
\end{defn}

Furthermore, we have the following Proposition \ref{mc_walk}, which implies Corollary \ref{cor_projection} below.

Let $\Prob_M(x_0 \rightarrow x; t)$ denote the probability that if we start at state $x_0$ at time 0, then we are in state $x$ at time $t$. From the following proposition of \cite{CW07a}, we obtain that if $M$ projects to $N$, then a walk on the state diagram of $M$ is indistinguishable from a walk on the state diagram of $N$.

\begin{prop}\label{mc_walk}
Suppose that $M$ projects to $N$. Let $x_0 \in X$ and $y_0,y_1 \in Y$ such that $f(x_0)=y_0$. Then 
\[ 
\Prob_N(y_0 \rightarrow y_1) = \sum_{x' \mbox{ s.t. } f(x')=y_1} \Prob_M(x_0 \rightarrow x_1)
\]
\end{prop}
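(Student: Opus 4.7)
The plan is to prove Proposition \ref{mc_walk} by induction on the number of time steps, interpreting $\Prob_N(y_0 \to y_1)$ and $\Prob_M(x_0 \to x')$ as $t$-step transition probabilities and using the two bullet points in the definition of projection to handle the one-step case.

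For the base case ($t=1$), I would analyze two subcases depending on whether $\Prob_N(y_0 \to y_1)$ is zero or positive. If $\Prob_N(y_0 \to y_1) > 0$, the second bullet of the projection definition gives a unique $x^\ast$ with $f(x^\ast) = y_1$ and $\Prob_M(x_0 \to x^\ast) > 0$, and moreover $\Prob_M(x_0 \to x^\ast) = \Prob_N(y_0 \to y_1)$; all other preimages of $y_1$ contribute $0$ to the sum, so the equality holds. Conversely, if $\Prob_N(y_0 \to y_1) = 0$, then no preimage $x'$ of $y_1$ can have $\Prob_M(x_0 \to x') > 0$, because the first bullet would then force $\Prob_M(x_0 \to x') = \Prob_N(y_0 \to y_1) = 0$, a contradiction. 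Hence both sides are $0$.

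For the inductive step, assume the identity holds at time $t$. Using the Chapman–Kolmogorov decomposition in $M$,
\[
\sum_{x':\, f(x') = y_1} \Prob_M(x_0 \to x'; t+1) = \sum_{x''} \Prob_M(x_0 \to x''; t) \sum_{x':\, f(x') = y_1} \Prob_M(x'' \to x').
\]
The inner sum is a one-step expression starting from $x''$, so by the base case it equals $\Prob_N(f(x'') \to y_1)$. Grouping the outer sum according to $y'' = f(x'')$ and applying the inductive hypothesis converts $\sum_{x'':\, f(x'')=y''} \Prob_M(x_0 \to x''; t)$ into $\Prob_N(y_0 \to y''; t)$. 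The resulting double sum $\sum_{y''} \Prob_N(y_0 \to y''; t)\, \Prob_N(y'' \to y_1)$ is exactly the Chapman–Kolmogorov decomposition of $\Prob_N(y_0 \to y_1; t+1)$ in $N$, completing the induction.

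The main obstacle is really only in the base case, and specifically in justifying the uniqueness clause that collapses the sum on the right-hand side to a single nonzero term; everything afterwards is a standard manipulation using the Markov property and the identity $f(x'') = y''$ to regroup the sum. Once the one-step statement is in hand, the inductive step is essentially automatic, and a careful bookkeeping of which $x''$ map to which $y''$ is all that remains.
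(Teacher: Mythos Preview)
The paper does not give its own proof of this proposition: it is quoted from \cite{cw_mc} and used as a black box. Your argument is correct and is the standard one --- the one-step identity is forced by the two clauses in the definition of a projection, and the general $t$-step identity then follows by Chapman--Kolmogorov and induction exactly as you describe. There is nothing to compare against in this paper, but your write-up would serve as a perfectly acceptable self-contained proof if one were desired.
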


Let $\Omega^{\TRAT}_{MC}$ be the Markov chain on the TRAT. We call the Markov chain on the 2-TASEP on a ring the \emph{TASEP chain}. 

\begin{cor}\label{cor_projection}
Suppose the TRAT chain $\Omega^{\TRAT}_{MC}$ projects to the TASEP chain. Let $X$ be a state of the 2-TASEP. Then the steady state probability of state $X$ in the TASEP chain is equal to the sum of the steady state probabilities that $\Omega^{\TRAT}_{MC}$ is in any of the states $R \in \TRAT_{\mathcal{T}_X}(X)$.
\end{cor}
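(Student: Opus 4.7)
The plan is to derive the corollary as a direct consequence of Proposition \ref{mc_walk} by passing to a $t \to \infty$ limit. Let $\sigma$ denote the stationary distribution of the TASEP chain restricted to $\TASEP(k,r,\ell)$, and let $\pi$ denote the stationary distribution of $\Omega^{\TRAT}_{MC}$ restricted to the corresponding preimage; write $f$ for the forgetful map sending a TRAT of type $X$ to the state $X$, so that $f^{-1}(X) = \TRAT_{\mathcal{T}_X}(X)$ for each $X$. Fix once and for all a starting TRAT $R_0$, and set $X_0 = f(R_0)$.

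First, I would promote the identity of Proposition \ref{mc_walk} to its $t$-step version,
\[
\Prob_N(X_0 \to X;\, t) \;=\; \sum_{R \in f^{-1}(X)} \Prob_M(R_0 \to R;\, t),
\]
by a short induction on $t$. The inductive step uses both bullets in the definition of projection: for each intermediate TASEP state $Y$ and each $R' \in f^{-1}(Y)$, the unique-lift condition identifies the single $R \in f^{-1}(X)$ reachable from $R'$ in one step, with the correct matching of one-step transition probabilities, so the one-step version in Proposition \ref{mc_walk} provides the base case and the inductive step simultaneously.

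Next, I would take $t \to \infty$. Both the TASEP chain on $\TASEP(k,r,\ell)$ and the induced chain on $f^{-1}(\TASEP(k,r,\ell))$ are finite and irreducible — irreducibility on the TASEP side is standard, and lifts to $\Omega^{\TRAT}_{MC}$ by the very definition of the projection — and both are aperiodic (by the usual self-loop argument, or by working in continuous time). The standard convergence theorem for finite irreducible aperiodic Markov chains then gives that the left side of the displayed identity tends to $\sigma(X)$ and the right side tends to $\sum_{R \in \TRAT_{\mathcal{T}_X}(X)} \pi(R)$, independent of the choice of $R_0$. Equating the two limits yields the corollary.

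The only step requiring real care is bootstrapping Proposition \ref{mc_walk} from one step to $t$ steps, and this is routine bookkeeping. The genuine content in the larger story is of course establishing that $\Omega^{\TRAT}_{MC}$ does project to the TASEP chain (which is the work of Section \ref{sec_markov} proper and the TRAT-MLQ bijection), but that is taken as the hypothesis of the corollary; once the hypothesis is in place, the remaining argument is just this limiting computation.
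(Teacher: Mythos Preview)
Your approach is the standard one and matches the paper's implicit reasoning (the paper gives no proof of the corollary beyond asserting that Proposition~\ref{mc_walk} implies it). There is, however, one genuine gap in your argument.

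You claim that irreducibility of the TASEP chain ``lifts to $\Omega^{\TRAT}_{MC}$ by the very definition of the projection.'' This is false in general. The projection axioms guarantee that from any $R_0$ you can reach \emph{some} element of every fiber $f^{-1}(Y)$, but not every element. A toy counterexample: take $X=\{a,b,c,d\}$, $Y=\{1,2\}$ with $f(a)=f(c)=1$, $f(b)=f(d)=2$, and let $M$ have transitions $a\leftrightarrow b$ and $c\leftrightarrow d$ only, while $N$ has $1\leftrightarrow 2$. Both bullets of the projection definition are satisfied and $N$ is irreducible, yet $M$ is not. So your convergence argument, as written, does not go through without an independent verification that $\Omega^{\TRAT}_{MC}$ is irreducible.

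The cleanest repair avoids convergence entirely. Take any stationary distribution $\pi$ of $\Omega^{\TRAT}_{MC}$ (one exists since the state space is finite) and push it forward: set $\sigma(Y)=\sum_{R\in f^{-1}(Y)}\pi(R)$. Using only the two projection bullets, one checks directly that $\sigma$ is stationary for the TASEP chain; since the TASEP chain \emph{is} irreducible, $\sigma$ is its unique stationary distribution. This gives the corollary with no appeal to irreducibility or aperiodicity of the upstairs chain, and is presumably the argument the paper (and \cite{cw_mc}) has in mind.
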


\subsection{Definition of the TRAT Markov chain $\Omega^{\TRAT}_{MC}$}

In this section, we no longer require fixing a particular tiling on $\mathcal{H}(X)$. 

\begin{defn}
A \emph{corner} of a tableau is a consecutive pair of 2 and 0, 2 and 1, or 1 and 0 edges (we call these 20-, 21-, and 10-corners, respectively). In particular, when a tableau has type $X=1Y2$, then the 1 and 2 edges at the opposite ends of $P(X)$ also form a 21-corner, since the tableau is a torus. Similarly, an \emph{inner corner} is a consecutive pair of 0 and 2, 0 and 1, or 1 and 2 edges (we call these 02-, 01-, and 12-corners, respectively).
\end{defn}

Let $R$ be a tableau with $a$ 20-corners, $b$ 21-corners, and $c$ 10-corners. Then there are $a+b+c$ possible transitions out of $R$. The Markov transitions are based on the following: each corner corresponds, up to tiling equivalence, to either a north-strip with an up-arrow in its bottom-most box (the \emph{up-arrow case}), or a west-strip with a left-arrow in its right-most box (the \emph{left-arrow case}). We define insertion of strips precisely below.

\begin{defn}
We introduce the notion of \emph{positive length} of a strip $\textbf{s}$ to mean the number of tiles of that strip that are contained northwest of $P(X)$, and we denote it by $L_+(\textbf{s})$. (Note that all north strips and all west strips have the same total length of $r+k$ and $r+\ell$, respectively. Moreover, for a west strip, positive length 0 is equivalent to positive length $r+\ell$, and for a north strip, positive length 0 is equivalent to positive length $r+k$.)
\end{defn}

\begin{defn}
Let $\mathcal{H}(X)$ have tiling $\mathcal{T}$. The \emph{insertion of a north-strip} $s_N(x)$ at a point $x$ on $P(X)$ is defined as follows. Take the path $p_N(x)$ that begins and ends at $x$ by traveling north up the vertical or diagonal edges of $\mathcal{T}$. It is important that $p_N(x)$ is as far to the right as possible, meaning that, if at some point the path has a choice between taking a vertical edge or a diagonal edge, it always chooses the diagonal one. Now replace each vertical edge of $p_N(x)$ with a 20-tile and each diagonal edge of $p_N(x)$ with a 10-tile. The newly inserted tiles form the north-strip $s_N(x)$. The horizontal edge adjacent to $x$ becomes a new 0-edge in $P(X)$. See Figure \ref{markov_north}.
\end{defn}

\begin{figure}[!ht]
  \centerline{\includegraphics[height=1in]{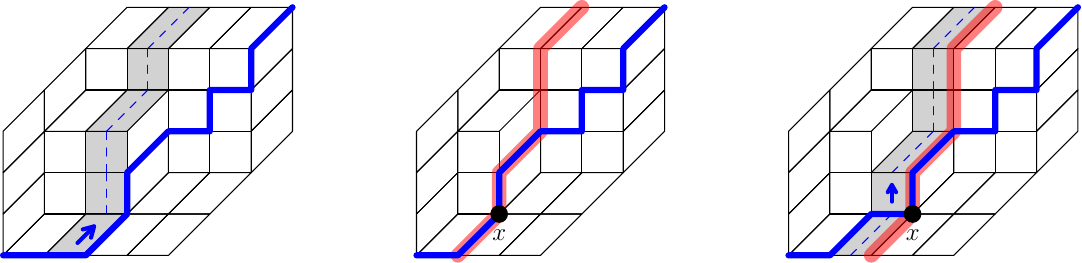}}
\centering
 \caption{The transition $\Omega^{\TRAT}_8$ from a TRAT $R$ of type $1202012\underline{\textbf{10}}0$ to a TRAT $R'$ of type $1202012\underline{\textbf{01}}0$ is shown. On the left is $R$ with a corner tile containing an up-arrow in north-strip $\textbf{n}$, and $L_+(\textbf{n})=6$. In the middle is the TRAT with the north-strip $\textbf{n}$ removed. Location $x$ is chosen since $L_+(s_N(x))=5$; $p_N(x)$ is marked by the pink path. On the right, $s_N(x)$ is inserted to build $R'=\Omega^{\TRAT}_8(R)$.}\label{markov_north}
 \end{figure}

\begin{defn}
Let $\mathcal{H}(X)$ have tiling $\mathcal{T}$. The \emph{insertion of a west-strip} $s_W(x)$ at a point $x$ on $P(X)$ is defined as follows. Take the path $p_W(x)$ that begins and ends at $x$ by traveling west along the horizontal or diagonal edges of $\mathcal{T}$. It is important that $p_W(x)$ is as far to the south as possible, meaning that, if at some point the path has a choice between taking a horizontal edge or a diagonal edge, it always chooses the diagonal one. 
Now replace each horizontal edge of $p_W(x)$ with a 20-tile and each diagonal edge of $p_W(x)$ with a 21-tile. The newly inserted tiles form the west-strip $s_W(x)$. The vertical edge adjacent to $x$ becomes a new 2-edge in $P(X)$. See Figure \ref{markov_west}.
\end{defn}

\begin{figure}[!ht]
  \centerline{\includegraphics[height=1in]{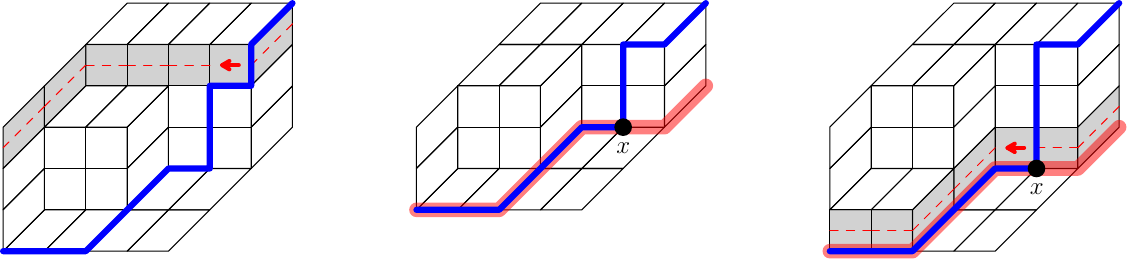}}
\centering
 \caption{The transition $\Omega^{\TRAT}_2$ from a TRAT $R$ of type $1\underline{\textbf{20}}2201100$ to a TRAT $R'$ of type $1\underline{\textbf{02}}2201100$ is shown. On the left is $R$ with a corner tile containing a left-arrow in west-strip $\textbf{w}$, and $L_+(\textbf{w})=6$. In the middle is the TRAT with the west-strip $\textbf{w}$ removed. Location $x$ is chosen since $L_+(S_W(x))=5$; $p_W(x)$ is marked by the pink path. On the right, $s_W(x)$ is inserted to build $R'=\Omega^{\TRAT}_2(R)$.}\label{markov_west}
 \end{figure}

\begin{defn} We define two types of transitions at a corner tile of TRAT $R$ containing an up-arrow or a left-arrow: the \emph{up-arrow transition} and the \emph{left-arrow transition} accordingly.
\begin{itemize}
\item \textit{Up-arrow transition}: let the up-arrow be contained in north-strip $\textbf{s}$. Let $x$ be the right-most location of $P(X)$ such that $L_+(s_N(x))=L_+(\textbf{s})-1 \mod (k+r)$. Remove $\textbf{s}$ from $R$, and insert the north-strip $s_N(x)$ at $x$, placing an up-arrow in its bottom-most box. See Figure \ref{markov_north}.
\item \textit{Left-arrow transition}: let the left-arrow be contained in west-strip $\textbf{s}$. Let $x$ be the bottom-most (i.e.~left-most) location of $P(X')$ such that $L_+(s_W(x))=L_+(\textbf{s})-1 \mod (\ell+r)$. Remove $\textbf{s}$ from $R$, and insert the west-strip $s_W(x)$ at $x$, placing a left-arrow in its right-most box. See Figure \ref{markov_west}.
\end{itemize}
\end{defn}

\begin{remark}
There is a subtlety arising from the choice of a tiling on $\mathcal{H}(X)$: there may be no 21-tile adjacent to a 21 corner, or there may be no 10-tile adjacent to a 10 corner. That can occur only when $P(X)$ has consecutive 2, 1, 0 edges in that order, in which case there will be a hexagonal configuration of three tiles adjacent to those three edges. In this case, we use the property of flip equivalence of Lemma \ref{flip_lemma}, to perform a flip on that configuration, placing the desired tile in the desired corner.
\end{remark}

\begin{defn}
Define $\Omega^{\TRAT}_i\ :\ \TRAT(k,r,\ell) \rightarrow \TRAT(k,r,\ell)$ to be the transition of $\Omega^{\TRAT}_{MC}$ on TRAT $R$ at the corner $(i,i+1)$ (where by convention we number the edges of $P(X)$ of a TRAT of type $X$ from right to left). 
\end{defn}

When $(i,i+1)$ is not a corner of $R$, $\Omega^{\TRAT}_i(R)$ is the identity map. Otherwise, we define $\Omega^{\TRAT}_i(R)$ as follows.

\textbf{(a.)\ \ \ $\textbf{(i, i+1)}$ is a 20 corner.}

There is necessarily a 20-tile containing either an up-arrow or a left-arrow adjacent to that corner. In the former case, the up-arrow transition is performed on $R$ to obtain $\Omega^{\TRAT}_i(R)$. In the latter case, the left-arrow transition is performed on $R$ to obtain $\Omega^{\TRAT}_i(R)$.

\textbf{(b.)\ \ \ $\textbf{(i, i+1)}$ is a 21 corner.}

If there is a 21-tile adjacent to that corner, it must necessarily contain a left-arrow. If there is no 21-tile adjacent to that corner, perform flips on $\mathcal{H}(X)$ until a 21-tile appears in the desired location. This tile must necessarily contain a left-arrow, and we perform the 21 transition on this new tiling. In both of these cases, the left-arrow transition is performed on $R$ to obtain $\Omega^{\TRAT}_i(R)$.

\textit{Special case when $i=n$ and $X=1Y2$.}

When $X=1Y2$ and we perform the transition $21\rightarrow12$ for the 1 and 2 edges at opposite ends of $P(X)$, we have a special case. The left-arrow in the bottom-most west-strip $\textbf{w}$ is contained in its rightmost 21-tile. We have $L_+(\textbf{w})=0$, and so $x$ is the bottom-most point of $P(12Y)$ such that $L_+(x)=\ell+r-1$. Then, as with the usual left-arrow transition, the west-strip $s_W(x)$ is inserted at $x$ with a left-arrow placed in its right-most box to obtain $\Omega^{\TRAT}_n(R)$. See Figure \ref{markov_west_exception} for an example. 

\begin{figure}[!ht]
  \centerline{\includegraphics[height=1in]{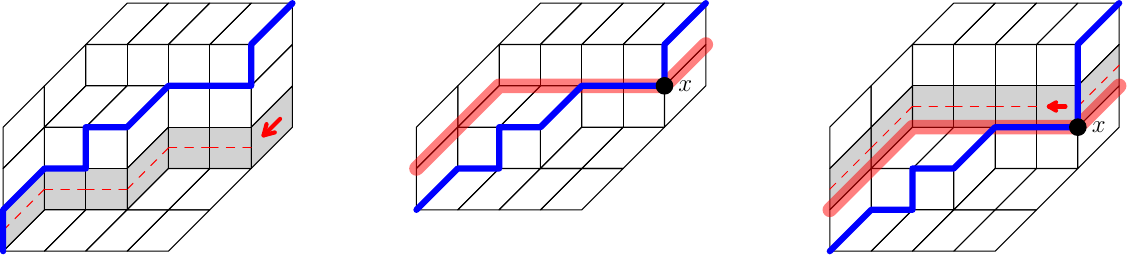}}
\centering
 \caption{The transition $\Omega^{\TRAT}_{10}(R)$ from a TRAT $R$ of type $\underline{\textbf{1}}20010201\underline{\textbf{2}}$ to a TRAT $R'$ of type $\underline{\textbf{12}}20010201$ is shown. On the left is $R$ with a corner tile containing a left-arrow in west-strip $\textbf{w}$, and $L_+(\textbf{w})=0$. In the middle is the TRAT with the west-strip $\textbf{w}$ removed. Location $x$ is chosen since $L_+(s_W(x))=6\equiv-1 \mod 7$; $p_W(x)$ is marked by the pink path. On the right, $s_W(x)$ is inserted to build $R'=\Omega^{\TRAT}_{10}(R)$.}\label{markov_west_exception}
 \end{figure}

\textbf{(c.)\ \ \ $\textbf{(i,i+1)}$ is a 10 corner.}

If there is a 10-tile adjacent to that corner, it must necessarily contain an up-arrow. If there is no 10-tile adjacent to that corner, perform flips on $\mathcal{H}(X)$ until such a tile appears in the desired location. This tile must necessarily contain an up-arrow, and we perform the 10 transition on this new tiling. In both of these cases, the up-arrow transition is performed on $R$ to obtain $\Omega^{\TRAT}_i(R)$.

\textit{Special case when $i=0$ and $X=10Y$.}

When $X=10Y$ and we perform the transition $10\rightarrow01$ for the 1 and 0 edges at the beginning of $P(X)$, we have a special case, since the 0-edge is then wrapped around to obtain a tableau of type $X'=1Y0$. The up-arrow in the right-most north-strip $\textbf{n}$ is contained in its bottom-most 10-tile. We have $L_+(\textbf{n})=1$, and so $x$ is the right-most point of $P(1Y0)$ such that $L_+(x)=r+k$. Then, as with the usual up-arrow transition, the north-strip $s_N(x)$ is inserted at $x$ with an up-arrow placed in its bottom-most box to obtain $\Omega^{\TRAT}_1(R)$. See Figure \ref{markov_north_exception} for an example. 

\begin{figure}[!ht]
  \centerline{\includegraphics[height=1in]{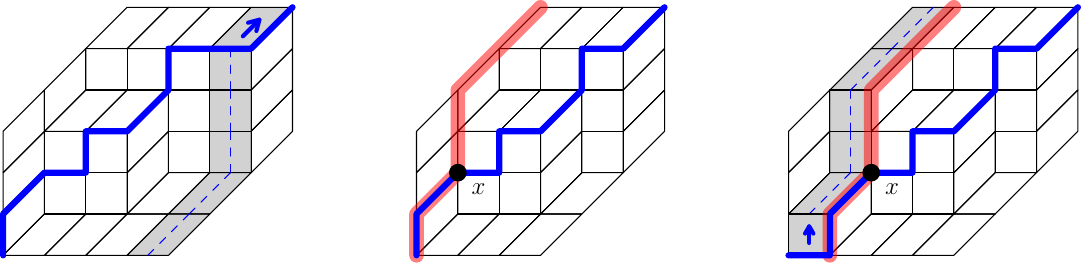}}
\centering
 \caption{The transition $\Omega^{\TRAT}_0(R)$ from a TRAT $R$ of type $\underline{\textbf{10}}02102012$ to a TRAT $R'$ of type $\underline{\textbf{1}}02102012\underline{\textbf{0}}$ is shown. On the left is $R$ with a corner tile containing an up-arrow in north-strip $\textbf{n}$, and $L_+(\textbf{n})=1$. In the middle is the TRAT with the north-strip $\textbf{n}$ removed. Location $x$ is chosen since $L_+(s_N(x))=6\equiv0 \mod 6$; $p_N(x)$ is marked by the pink path. On the right, $s_N(x)$ is inserted to build $R'=\Omega^{\TRAT}_0(R)$.}\label{markov_north_exception}
 \end{figure}

Our Markov chain is a projection onto the 2-TASEP on a ring if the following lemma holds.

\begin{lemma}\label{lem1}
If there is a transition $X \rightarrow Y$ on the TASEP chain, then for any TRAT $R_X$ of type $X$, there exists exactly one tableau $R_Y$ such that there is a transition $R_X \rightarrow R_Y$ on the TRAT chain and $R_Y$ has type $Y$.
\end{lemma}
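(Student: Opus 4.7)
The plan is to prove Lemma \ref{lem1} by case analysis on the type of corner $(i,i+1)$ of $R_X$ at which the TASEP transition $X\to Y$ occurs. Since the only TASEP transitions are $20\to 02$, $21\to 12$, and $10\to 01$, in each case the relevant corner is uniquely determined (by the positions where $X$ and $Y$ differ), and so the transition $R_X \to R_Y$ must be of the form $\Omega^{\TRAT}_i(R_X)$ for that specific $i$. What I must verify is that (a) $\Omega^{\TRAT}_i(R_X)$ is well-defined, (b) the resulting tableau has type $Y$, and (c) it is the unique tableau with these properties reachable from $R_X$ in one step.

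I would begin with the 21-corner and 10-corner cases, which are essentially forced. At a 21-corner, after possibly applying a single flip (justified by Lemma \ref{flip_lemma}) to produce an adjacent 21-tile, that tile necessarily contains a left-arrow: it is the rightmost tile of its west-strip, hence is not pointed at by any arrow within the strip, so by rule (ii) of the TRAT definition it must itself be an arrow-carrier, and the only arrow a 21-tile can carry is a left-arrow. Similarly for the 10-corner case (up-arrow on a 10-tile). In each case the unique strip transition defined in the Markov chain produces $R_Y$, and I need to check that the insertion procedure yields a tableau whose boundary path $P(Y)$ has the swapped edges at positions $(i,i+1)$. This follows from how insertion replaces the vertical/diagonal or horizontal/diagonal edges along $p_N(x)$ or $p_W(x)$ and creates a new boundary edge at the insertion point.

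Next I would handle the 20-corner case, which is the only one involving genuine choice. Here the adjacent 20-tile cannot be empty (again by a boundary argument: it is the last tile of both its north-strip and its west-strip, hence pointed at by no arrow), and so it contains either an up-arrow or a left-arrow, but not both — exactly one of the two transitions therefore applies to $R_X$. In either case the transition is fully determined by the choice of insertion location $x$, so the key technical point is uniqueness of $x$: I must show that among the points of the new boundary path $P(Y)$, there is exactly one $x$ with $L_+(s_N(x))\equiv L_+(\textbf{s})-1\pmod{k+r}$, and analogously for the west-strip case. This follows because as $x$ sweeps along $P(Y)$ the quantity $L_+(s_N(x))$ takes each residue modulo $k+r$ exactly once (by construction of the rightmost path $p_N(x)$), and the prescription ``rightmost such $x$'' picks a single representative within each residue class.

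The main obstacle will be carefully verifying that $R_Y$ so produced indeed has type $Y$ (i.e., that the new boundary edge introduced by the inserted strip lands in the right position to encode the desired swap) and dealing cleanly with the two special cases $X=1Y2$ and $X=10Y$, where the swap wraps around the torus and the starting positive length is $0$ or $1$ respectively. In these cases, the insertion location $x$ is computed relative to the modified cyclic shift of the type word (so that $X$ still begins with a $1$ in the destination tableau), and I would invoke the $L_+\equiv -1\pmod{\ell+r}$ (resp.\ $\equiv 0 \pmod{k+r}$) prescription of the special-case definitions in Figures~\ref{markov_west_exception} and~\ref{markov_north_exception} to exhibit the unique $R_Y$. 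Finally, uniqueness across corner-types is automatic because a single application of $\Omega^{\TRAT}_j$ for $j\neq i$ either leaves $R_X$ fixed or alters boundary edges outside positions $(i,i+1)$, producing a tableau of type different from $Y$.
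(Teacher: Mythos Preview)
Your proposal is correct and follows the same logical line as the paper's proof, which simply observes that by construction $\Omega^{\TRAT}_i$ swaps the boundary edges at positions $(i,i+1)$ and is therefore the unique transition sending a tableau of type $X$ to one of type $Y$. The paper treats well-definedness of $\Omega^{\TRAT}_i$ as already built into the definitions and dispatches the lemma in two sentences, so your case-by-case verification---while sound---is considerably more detailed than what the paper actually provides.
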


\begin{proof}
For each $i$, the map $\Omega^{\TRAT}_i$ sends a tableau $R$ to a tableau $R'$, where the boundary of $R'$ is the boundary of $R$ with edges at locations $i$ and $i+1$ swapped. Thus if $X\rightarrow Y$ is a transition on the TASEP chain and $X$ differs from $Y$ at some adjacent pair of locations $(i, i+1)$, then $\Omega^{\TRAT}_i$ is the unique transition on the TRAT chain that sends a tableau of type $X$ to a tableau of type $Y$, so the lemma holds immediately by construction. 
\end{proof}

\begin{figure}[!ht]
  \centerline{\includegraphics[width=\linewidth]{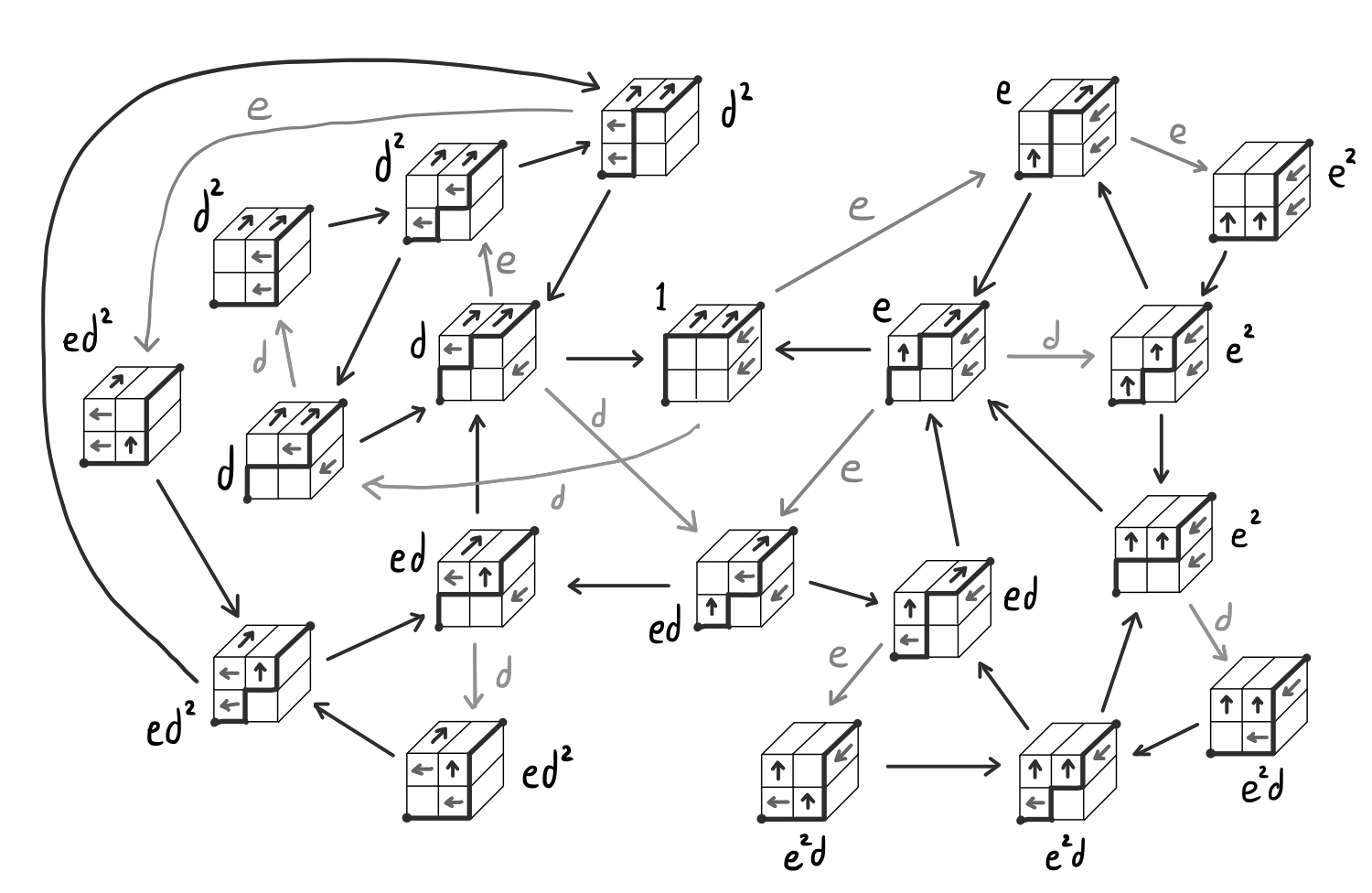}}
 \centering
  \caption{The Markov chain $\Omega^{\TRAT}_{MC}$ on states of size $(2,1,2)$. In this figure, the unmarked arrows represent transitions $20 \rightarrow 02$ which have rate $t=1$, while the arrows marked with $e$ represent transitions $10 \rightarrow 01$, and the arrows marked with $d$ represent transitions $21 \rightarrow 12$ (in this case the 1-edge and the 2-edge are at opposite ends of $P(X)$). The monomials labeling the tableaux give the weights of the tableaux according to Definition \ref{trat_weight_inhomog}.}
  \label{Markov_all}
\end{figure}

Denote the transition rate from $X$ to $X'$ by $\pr(X \rightarrow X')$.

\begin{defn}
$\wt$ satisfies \emph{detailed balance} on a Markov chain with states $\mathcal{S}$ if for all $X\in\mathcal{S}$,
\[\wt(X)\sum_{X'\in\mathcal{S}} \pr(X \rightarrow X') = \sum_{X''\in\mathcal{S}} \pr(X'')\pr(X'' \rightarrow X).\]
\end{defn}

If a Markov chain satisfies detailed balance, the weight of each state is proportional to its stationary distribution.

\begin{lemma}\label{lem2}
There is a uniform stationary distribution on the (homogeneous) TRAT chain.
\end{lemma}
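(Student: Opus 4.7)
The strategy is to verify that the uniform probability measure on the set of TRATs of size $(k,r,\ell)$ satisfies the global balance equation for the homogeneous chain $\Omega^{\TRAT}_{MC}$. Since every transition has rate one when $t=d=e=1$, this reduces to showing that for each TRAT $R$, the total number of outgoing transitions equals the total number of incoming transitions.

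I would first analyze the out-degree: by the definition of the chain, outgoing transitions from $R$ are in bijection with the corners of $R$'s toric diagram $\mathcal{H}(X)$, where $X=\type(R)$, including the two cyclic corners handled by the special cases $X=10Y$ and $X=1Y2$. For the in-degree, the key structural observation is that each transition $\Omega^{\TRAT}_i$ is a \emph{cyclic strip shift}: it removes a strip through a corner and reinserts it one unit later (as measured by $L_+$ modulo the strip length). This elementary structure admits a canonical inverse, obtained by shifting the strip one unit in the opposite direction. Consequently, given a TRAT $R$ together with an inner corner at $(i,i+1)$, one can reconstruct the unique predecessor $R'$ whose transition lands on $R$.

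The cleanest way to verify that in-degree equals out-degree at every individual TRAT (rather than merely on average over a type class) is via the weight-preserving TRAT-MLQ bijection of Section~\ref{sec_bij}. Under this bijection, each cyclic strip shift in the TRAT corresponds to a unit cyclic shift of a top-row ball in the MLQ, and the induced chain on MLQs (formally developed in Section~\ref{sec_MLQ_markov}) admits the uniform measure as its stationary distribution. This is a consequence of the combinatorial reversibility of the ball-drop and ball-lift moves together with the product-structure observation of Section~\ref{sec_TRAT} that there are exactly $\binom{n}{\ell}\binom{n}{k}$ equiprobable MLQs. Transporting the uniform measure on MLQs back across the bijection then yields uniform stationarity for $\Omega^{\TRAT}_{MC}$, and combining with Corollary~\ref{cor_projection} and the remark that $\sum_X o(X)\weight(X)=\binom{n}{\ell}\binom{n}{k}$ one recovers Theorem~\ref{TRAT_thm}.

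The main obstacle will be carefully matching the TRAT transitions with their inverses at the cyclic boundary: the special cases $X=10Y$ and $X=1Y2$ produce transitions that rotate the canonical $X_1=1$ representative, and one must check that each such wrap-around transition $R\to R'$ is paired with a corresponding reverse transition so that the balance holds pointwise rather than only after averaging. Passing to the MLQ side via the bijection largely defuses this issue, since ball-drop dynamics manifest the ring's cyclic symmetry transparently, but one still has to confirm that the two Markov chains agree as dynamical systems — not merely as state spaces — including a verification for the tiling-flip equivalences invoked in cases (b) and (c) of the definition of $\Omega^{\TRAT}_i$.
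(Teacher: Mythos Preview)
Your first two paragraphs contain the paper's argument: out-degree at $R$ equals the number of corners, each transition $\Omega^{\TRAT}_i$ is invertible (a strip shift by one unit of $L_+$), and hence in-degree equals out-degree. The paper's proof is essentially that one sentence, deferring the construction of $(\Omega^{\TRAT}_i)^{-1}$ to Lemma~\ref{lem3}, where each outgoing transition at a corner $\mathbf{c}$ is explicitly paired with an incoming transition at the nearest inner corner $\mathbf{u}$. One small omission in your direct argument: you observe that incoming transitions correspond to inner corners, but never say why the number of inner corners equals the number of corners. This is closed either by the explicit corner--to--inner-corner pairing of Lemma~\ref{lem3}, or by the elementary fact that a cyclic word over $\{0,1,2\}$ has equally many ascents and descents.

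Your third paragraph, offered as ``the cleanest way,'' is circular. You assert that the induced MLQ chain has uniform stationary distribution ``as a consequence of the combinatorial reversibility of the ball-drop and ball-lift moves together with the product-structure observation \ldots{} that there are exactly $\binom{n}{\ell}\binom{n}{k}$ equiprobable MLQs.'' Neither ingredient does the work you want. The ball-drop/ball-lift maps are the \emph{static} bijection $\MLQ(X)\leftrightarrow\TRAT(X)$; they are not the dynamical moves of $\Omega^{\MLQ}_{MC}$, and their invertibility says nothing about the degree balance of that chain. And the count $\binom{n}{\ell}\binom{n}{k}$ is just the cardinality of the state space --- it does not single out the uniform measure as stationary. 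To prove the MLQ chain is uniform you would need exactly the in-degree $=$ out-degree argument you already sketched on the TRAT side, so the detour accomplishes nothing. Drop the third paragraph and simply execute the direct argument; the cyclic-boundary concerns you raise in the fourth paragraph are handled by the paper's special-case definitions and the pairing in Lemma~\ref{lem3}, without any appeal to MLQs.
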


\begin{proof}
When each transition has rate 1, detailed balance holds for a uniform distribution on the tableaux if and only if each tableau has an equal number of transitions going into and out of it.

By our definition of the TRAT chain, each corner of a tableau corresponds to precisely one transition coming out of it. By observing the image of the TRAT chain, we see that each corner also corresponds to precisely one transition going into the tableau (we omit the sufficiently straightforward definition of the reverse chain $(\Omega^{\TRAT}_{MC})^{-1}$ with transitions given by $(\Omega^{\TRAT}_i)^{-1}$ for $0\leq i \leq n$, which is simply the reverse of our definition of the forward chain - it will be further discussed in the next lemma). Thus the above claim holds true, which proves the lemma.
\end{proof}

As one would hope, the Markov chain on the weighted TRAT with transitions identical to the usual TRAT chain, projects to the inhomogeneous 2-TASEP on a ring. The following lemma shows that detailed balance holds in the inhomogeneous case.

\begin{lemma}\label{lem3}
Let $R$ be a TRAT with type $X$, and let $R'=\Omega^{\TRAT}_i$. There exists a tableau $R''$ such that $\wt(R)\pr(R \rightarrow R') = \wt(R'')\pr(R'' \rightarrow R)$.
\end{lemma}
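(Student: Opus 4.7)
The plan is to exhibit, for each outgoing transition $R \to R' = \Omega^{\TRAT}_i(R)$, a unique partner tableau $R''$ and an inner corner of $R$ such that the forward transition from $R''$ lands on $R$ and the weighted-rate identity $\wt(R)\pr(R \to R') = \wt(R'')\pr(R'' \to R)$ holds. Summed over all outgoing transitions from $R$, this collection of identities yields the stationarity equation for the weighted measure $\wt$, strengthening Lemma \ref{lem2} to the inhomogeneous setting.

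First I would stratify the outgoing transitions by the type of the corner at $(i,i+1)$: a $20$-corner contributes rate $t=1$, a $21$-corner rate $d$, and a $10$-corner rate $e$. In each case, $\Omega^{\TRAT}_i$ removes a strip containing one explicitly placed arrow and re-inserts that arrow in the rightmost (respectively, bottommost) tile of a single-position shift of the strip. The partner $R''$ is obtained by a mirror construction: starting from a suitable inner corner $(j,j+1)$ of $R$, I would identify the tableau $R''$ whose outgoing transition $\Omega^{\TRAT}_j$ reproduces $R$, which amounts to running the appropriate strip insertion ``in reverse'' on $R$. This produces a unique $R''$, and the matching $i \leftrightarrow j$ refines the combinatorial bijection already used in Lemma \ref{lem2}.

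The comparison of weights is the heart of the argument. Since $\wt(R) = d^{\Left(R)} e^{\Up(R)}$ records only the arrows contained in $20$-tiles, I would carefully track how the single explicitly placed arrow sits before and after each transition. When the corner is a $20$-corner, the moved arrow stays in a $20$-tile in both $R$ and $R''$, so $\wt(R) = \wt(R'')$ and the identity reduces to $1\cdot\wt(R)=1\cdot\wt(R'')$. When the corner is a $21$-corner, the removed left-arrow in $R$ sits in a $21$-tile (contributing nothing to $\wt(R)$), while the corresponding left-arrow in $R''$ lies in a $20$-tile (contributing a factor of $d$), giving $\wt(R'') = d\cdot\wt(R)$ and matching $\wt(R)\cdot d = \wt(R'')\cdot 1$. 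The $10$-corner case is the $\Up$-side mirror of the $21$ case, with $e$ replacing $d$.

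The main obstacle is ensuring that the non-explicitly-placed arrows in the affected strip, which are forced by the filling rules rather than placed by the transition, do not spuriously change their tile type ($20$ versus $21$ or $10$) under the pairing. I would argue that the insertion paths $p_N(x)$ and $p_W(x)$ of Section \ref{sec_markov} differ by only a single tile between the forward transition $R \to R'$ and the reverse transition $R'' \to R$ up to flip equivalence, so that any residual discrepancies are absorbed by filling-preserving flips, which by Lemma \ref{flip_lemma} preserve $\wt$. The wrap-around special cases $X = 1Y2$ and $X = 10Y$ are then handled by applying the same weight comparison to the modified insertion construction used in the definition of $\Omega^{\TRAT}_i$, completing the proof.
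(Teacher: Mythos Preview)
Your overall strategy matches the paper's: construct the partner $R''$ as the preimage of $R$ under some forward transition $\Omega^{\TRAT}_g$ at an inner corner of $R$, and verify the weighted-rate identity. But your weight bookkeeping is wrong because you case only on the type of the outgoing corner $\mathbf{c}=(i,i+1)$. The tile type that the moved arrow occupies in $R''$, and hence both $\wt(R'')/\wt(R)$ and $\pr(R''\to R)$, are governed by the type of the \emph{inner} corner $\mathbf{u}$ at which the reverse transition originates, not by $\mathbf{c}$. Concretely, suppose $\mathbf{c}$ is a $20$-corner containing a left-arrow and the nearest inner corner to its right is a $12$-corner. Then in $R''$ that left-arrow sits in a $21$-tile, so $\wt(R'')=d^{-1}\wt(R)$, while the corner of $R''$ is a $21$-corner with rate $d$; the identity becomes $\wt(R)\cdot 1 = (d^{-1}\wt(R))\cdot d$, not $\wt(R)\cdot 1 = \wt(R)\cdot 1$ as you claim. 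Symmetrically, if $\mathbf{c}$ is a $21$-corner but $\mathbf{u}$ is a $12$-corner, the arrow in $R''$ stays in a $21$-tile and $\wt(R'')=\wt(R)$, contrary to your assertion that it must move to a $20$-tile. The paper handles this by enumerating all eight combinations of $(\mathbf{c},\mathbf{u})$; you have collapsed these to three cases in a way that happens to be correct only when $\mathbf{u}$ is a $02$-corner.

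Your identified ``main obstacle'' is a red herring: the removed strip contains exactly one arrow (in its extremal tile), and all its other tiles are empty by the filling rules, so there are no forced arrows in that strip whose tile type could change. The arrows outside the strip are untouched by the transition. The genuine bookkeeping is entirely about the single explicit arrow, and its tile type in $R''$ is dictated by $\mathbf{u}$.
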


\begin{figure}[!ht]
  \centerline{\includegraphics[width=.9\linewidth]{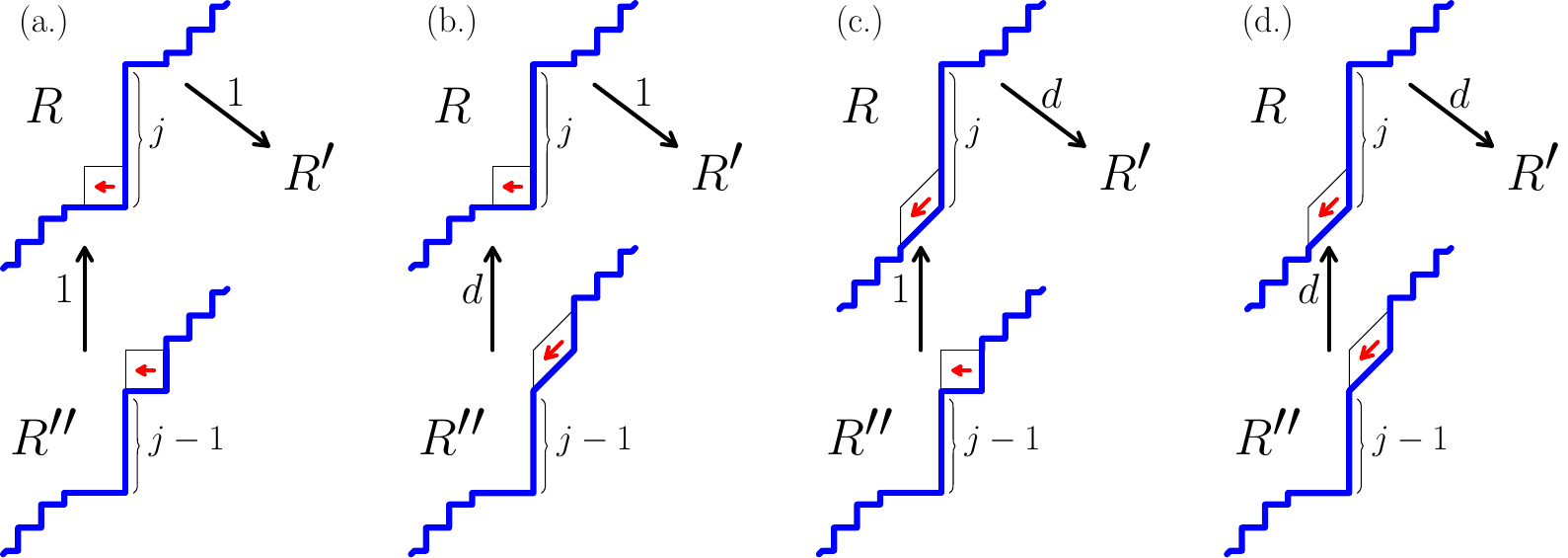}}
 \centering
  \caption{An illustration of Lemma \ref{lem3} is shown, with tableaux $R$, $R'$, and $R''$ such that $R\rightarrow R'$, $R''\rightarrow R$, and $\pr(R)\pr(R\rightarrow R')=\pr(R'')\pr(R''\rightarrow R)$, and the tiles at corners $\textbf{c}$ and $\textbf{i}$ highlighted.}
  \label{inhomog_trans}
\end{figure}

\begin{proof}
Let $\textbf{c}$ be the corner at edges $(i,i+1)$ of $R$. If $\textbf{c}$ contains a left-arrow, let $\textbf{u}$ be the closest inner corner to its right at edges $(g,g+1)$. If $\textbf{c}$ contains an up-arrow, let $\textbf{u}$ be the closest inner corner to its left at edges $(g,g+1)$. Let $R''=(\Omega^{\TRAT}_i)^{-1}(R)$ be the tableau obtained by performing a \emph{reverse TRAT chain transition} at $\textbf{c}$, which is equivalent to saying $R=\Omega^{\TRAT}_g(R")$. (This transition amounts to switching the edges of $\textbf{u}$ and converting it from an inner corner to an outer corner, and then moving the contents of $\textbf{c}$ in that new tile while correspondingly shifting the strips between $\textbf{c}$ and $\textbf{u}$.) 
We consider the following eight cases, the first four of which are shown in Figure \ref{inhomog_trans}.
\begin{enumerate}
\item[(a.)] $\textbf{c}$ is a 20-corner containing a left-arrow, and $\textbf{u}$ is a 02-corner. Then $\wt(R'')=\wt(R)$ since the left-arrow from $\textbf{c}$ is still in a 20-tile in $R''$, and $\pr(R'' \rightarrow R)= \pr(R \rightarrow R')=1$.
\item[(b.)] $\textbf{c}$ is a 20-corner containing a left-arrow, and $\textbf{u}$ is a 12-corner. Then $\wt(R'')=d^{-1}\wt(R)$ since $R''$ loses a 20-tile containing a left-arrow, and $\pr(R'' \rightarrow R)= d \pr(R \rightarrow R')=d$.
\item[(c.)] $\textbf{c}$ is a 21-corner containing a left-arrow, and $\textbf{u}$ is a 02-corner. Then $\wt(R'')=d\wt(R)$ since $R''$ gains a 20-tile containing a left-arrow and $\pr(R'' \rightarrow R)= d^{-1}\pr(R \rightarrow R')=1$.
\item[(d.)] $\textbf{c}$ is a 21-corner containing a left-arrow, and $\textbf{u}$ is a 12-corner. Then $\wt(R'')=\wt(R)$ since the left-arrow from $\textbf{c}$ is still in a 21-tile in $R''$, and $\pr(R'' \rightarrow R)= \pr(R \rightarrow R')=d$.
\item[(e.)] $\textbf{c}$ is a 20-corner containing an up-arrow, and $\textbf{u}$ is a 02-corner. Then $\wt(R'')=\wt(R)$ since the up-arrow from $\textbf{c}$ is still in a 20-tile in $R''$, and $\pr(R'' \rightarrow R)= \pr(R \rightarrow R')=1$.
\item[(f.)] $\textbf{c}$ is a 20-corner containing an up-arrow, and $\textbf{u}$ is a 01-corner. Then $\wt(R'')=e^{-1}\wt(R)$ since $R''$ loses a 20-tile containing an up-arrow, and $\pr(R'' \rightarrow R)= e \pr(R \rightarrow R')=e$.
\item[(g.)] $\textbf{c}$ is a 10-corner containing an up-arrow, and $\textbf{u}$ is a 02-corner. Then $\wt(R)=e\wt(R)$ since $R''$ gains a 20-tile containing an up-arrow, and $\pr(R'' \rightarrow R)= e^{-1}\pr(R \rightarrow R')=1$.
\item[(h.)] $\textbf{c}$ is a 10-corner containing an up-arrow, and $\textbf{u}$ is a 01-corner. Then $\wt(R'')=\wt(R)$ since the up-arrow from $\textbf{c}$ is still in a 10-tile in $R''$, and $\pr(R'' \rightarrow R)= \pr(R \rightarrow R')=e$.
\end{enumerate}
In all cases, $\wt(R)\pr(R \rightarrow R') = \wt(R'')\pr(R'' \rightarrow R)$, completing the proof.
\end{proof}

To conclude, Proposition \ref{mc_walk}, Lemma \ref{lem1}, and Lemma \ref{lem3} imply the following result.
\begin{cor}
The inhomogeneous TRAT chain, whose states have weights given by $\wt_e$, projects onto the inhomogeneous 2-TASEP on a ring.
\end{cor}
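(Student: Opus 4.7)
The plan is to assemble the corollary from the three ingredients already in place: Lemma \ref{lem1} for the lifting of transitions, Lemma \ref{lem3} for the weighted balance at each tableau, and Proposition \ref{mc_walk} for transferring stationary probabilities along a projection. Concretely, I would take as the projection map $f : \TRAT(k,r,\ell) \to \TASEP(k,r,\ell)$ sending each $R$ to its type, and equip the TRAT chain $\Omega^{\TRAT}_{MC}$ with rates $t=1$, $d$, $e$ on transitions arising from $20$-, $21$-, and $10$-corners respectively, matching the hopping rates of the inhomogeneous 2-TASEP.

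First I would verify the two bullet conditions of the definition of projection. By construction, each transition $\Omega^{\TRAT}_i(R)=R'$ changes the boundary of $R$ only at edges $(i,i+1)$, so $f(R)\to f(R')$ is a legal transition of the 2-TASEP of the same rate as $R\to R'$. Lemma \ref{lem1} supplies the converse: given a 2-TASEP transition $X\to Y$ at corner $(i,i+1)$ and any $R$ of type $X$, there is a unique image $\Omega^{\TRAT}_i(R)$ of type $Y$. Together, these give both the rate-matching condition and the unique-lift condition in the definition of projection.

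Next I would show that $\wt_e$ is (up to normalization) stationary for the weighted TRAT chain. Fix $R$ and sum Lemma \ref{lem3}'s identity $\wt_e(R)\,\pr(R\to R') = \wt_e(R'')\,\pr(R''\to R)$ over all corners $(i,i+1)$ of $R$. The assignment $R'\mapsto R''$ in the lemma is invertible: inspecting the eight cases (a)--(h) one sees that the reverse transition $(\Omega^{\TRAT}_i)^{-1}$ already appearing in the proof of Lemma \ref{lem2} provides the inverse pairing, so the outgoing transitions at $R$ and the incoming transitions into $R$ are matched in weight-preserving pairs. Summing yields the global balance equation
\[
\wt_e(R)\sum_{R'}\pr(R\to R') \;=\; \sum_{R''}\wt_e(R'')\,\pr(R''\to R),
\]
so $\wt_e$ is a stationary weight. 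Combining this with the projection verified above and applying Proposition \ref{mc_walk} gives
\[
\Pr(X) \;\propto\; \sum_{R\in \TRAT_{\mathcal{T}_X}(X)} \wt_e(R),
\]
which is what ``projects onto the inhomogeneous 2-TASEP'' means in the sense of Corollary \ref{cor_projection}.

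The main subtlety I expect is the verification that the Lemma \ref{lem3} pairing is a genuine bijection between outgoing and incoming transitions at each $R$, rather than merely an injection in one direction. This is local and case-based: one has to confirm that $(\Omega^{\TRAT}_i)^{-1}$ is well-defined on every tableau (including across the flips needed to expose a $21$- or $10$-tile at a corner) and that the eight cases of Lemma \ref{lem3} exhaust all possibilities for incoming flow. A minor but related bookkeeping point is the special-case treatment when $X=1Y2$ or $X=10Y$; these were handled explicitly in the definition of $\Omega^{\TRAT}_i$, and I would simply note that the same local pairing argument goes through in those cases as well.
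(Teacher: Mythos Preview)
Your proposal is correct and follows essentially the same approach as the paper, which simply states that Proposition \ref{mc_walk}, Lemma \ref{lem1}, and Lemma \ref{lem3} together imply the corollary. Your write-up is in fact more careful than the paper's one-line deduction: you explicitly articulate why the pairing in Lemma \ref{lem3} must be a bijection between incoming and outgoing transitions at $R$ (via the reverse chain $(\Omega^{\TRAT}_i)^{-1}$ already invoked in Lemma \ref{lem2}) in order to sum to the global balance equation, whereas the paper leaves this implicit.
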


\begin{example}
Figure \ref{Markov_all}, shows an example of the Markov chain on all the states of $\TRAT(2,1,2)$ which projects to the inhomogeneous 2-TASEP with parameters $e$ and $d$. In this example, we obtain the following stationary probabilities:
\begin{align*}
\mathcal{Z}_{2,1,2}\Prob(10022)&=1&\qquad \mathcal{Z}_{2,1,2}\Prob(12020)&=d^2+ed^2+ed+e^2d+e^2\\
\mathcal{Z}_{2,1,2}\Prob(10202)&=d+e&\qquad \mathcal{Z}_{2,1,2}\Prob(12200)&=d^2+2ed^2+2e^2d+e^2\\
\mathcal{Z}_{2,1,2}\Prob(10220)&=d^2+ed+e&\qquad \mathcal{Z}_{2,1,2}\Prob(12002)&=d+ed+e^2\\
\end{align*}
with $\mathcal{Z}_{2,1,2}=1+2d+2e+3ed+3d^2+3e^2+3ed^2+3e^2d$.
\end{example}

\subsection{Markov Chain on multiline queues that projects to the inhomogeneous 2-TASEP on a ring}\label{sec_MLQ_markov}

From the Markov chain on the TRAT and following the bijection of Section \ref{sec_bij}, we construct a minimal Markov chain on the weighted MLQs, which we call $\Omega^{\MLQ}_{MC}$, that is different from both Markov chains in \cite{FM07} and in \cite{AL14}, that projects to the inhomogeneous 2-TASEP on a ring. The Markov chain is minimal in the sense that every nontrivial transition in $\Omega^{\MLQ}_{MC}$ corresponds to a nontrivial transition in the TASEP.

Let $M$ be an MLQ. 
We denote by $M(i)$ the TASEP particle corresponding to location $i$ in $M$. Recall that if the bottom row contains a vacancy at location $i$, then $M(i)=2$; if the bottom row contains a 0-ball  (i.e.~one that is hit by a dropping top row ball), $M(i)=0$; and if the bottom row contains a 1-ball (i.e.~that is not hit by a dropping top row ball), $M(i)=1$.

\begin{defn}
We call a ball in the bottom row \emph{occupied} if there is a ball directly above it. Otherwise if there is a vacancy above it, we call it \emph{vacant}.
\end{defn}

Note that 1-balls are necessarily vacant. Moreover, no path from a top row ball to the 0-ball it occupies can pass through a 1-ball. 

\begin{defn}\label{def_MLQ_mc}
The transition of $\Omega^{\MLQ}_{MC}$ on $M$ at location $i$, denoted by $\Omega^{\MLQ}_i(M)$, is given by the following rules.
\begin{itemize}
 \item \emph{Occupied jump:} if a transition occurs at an occupied ball at location $i$, let $j<i-1$ be the nearest index left of $i-1$ such that $M(j)\neq 0$, that is, $j = \max\{j<i-1: M(j)\neq0\}$. A ball is inserted in the top row at location $j+1$, shifting all top row contents at locations $j+1,\ldots,i-1$ one spot to the right.
 \item \emph{Vacant jump:} if a transition occurs at a vacant ball at location $i$, let $j>i$ be the nearest index right of $i$ such that $M(j)\neq 2$, that is, $j = \min\{j>i: M(j)\neq2\}$. A vacancy is inserted in the top row at location $j$, shifting all top row contents at locations $i+1,\ldots,j-1$ one spot to the left.
\end{itemize}
In both cases, the bottom row contents of locations $i-1$ and $i$ are swapped.
\end{defn}

Figure \ref{MLQ_transitions} shows examples of each of the occupied and vacant jumps.

\begin{figure}[!h]
  \centerline{\includegraphics[width=\linewidth]{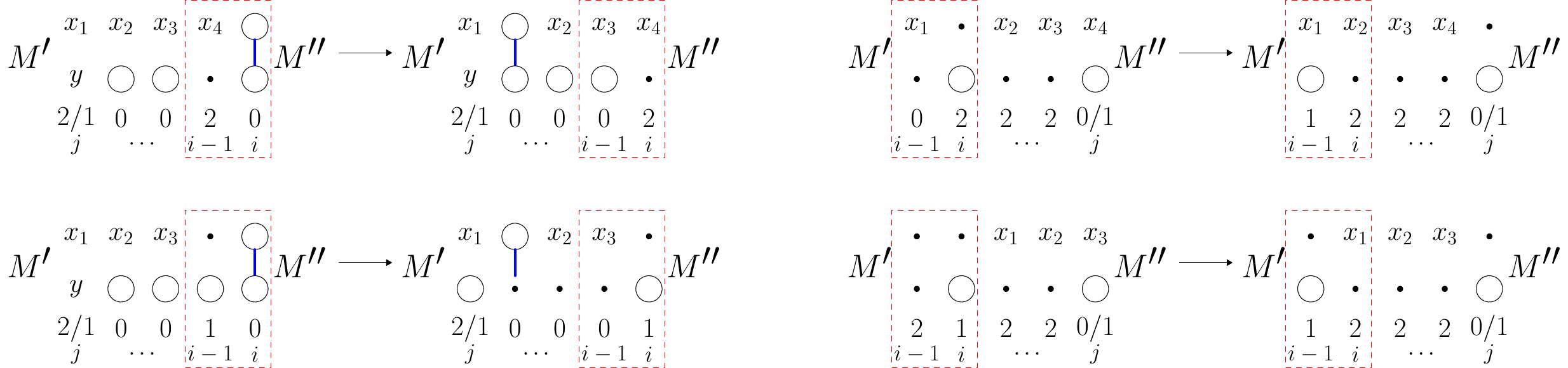}}
\centering
 \caption{On the left are the occupied jumps, showing a $20 \rightarrow 02$ transition on top and a $10 \rightarrow 01$ transition on the bottom. On the right are the vacant jumps, showing a $20 \rightarrow 02$ transition on top and a $21 \rightarrow 12$ transition on the bottom. For clarity, in this figure dots represent vacancies (in the top and bottom row), and the $x_i$'s represent arbitrary entries. The variable $y$ represents either a vacancy or a 1-ball.}\label{MLQ_transitions}
 \end{figure}

\begin{remark}
Though $\Omega^{\MLQ}_{MC}$ has some similarities with the minimal Markov chain described in Section 5 of \cite{AL14}, out transitions are different. In particular, our Markov chain is equivalent to the TRAT Markov chain through the MLQ-TRAT bijection, which is addressed in the following lemma. 
\end{remark}

\begin{lemma}
Let $M$ be an MLQ. The MLQ-TRAT bijection gives the following correspondences.
\begin{itemize}
\item An occupied jump on $M$ at location $i$ corresponds to an up-arrow transition in $\Omega^{\TRAT}_{MC}$ at edges $(i-1,i)$, which is a 20 transition (resp.~10 transition) at locations $(i-1,i)$ on the ASEP chain when $M(i-1)$ is a vacancy (resp.~1-ball). 
\item A vacant jump on $M$ at location $i$ corresponds to a left-arrow transition in $\Omega^{\TRAT}_{MC}$ at edges $(i-1,i)$, which is a 20 transition (resp.~21 transition) at locations $(i-1,i)$ on the ASEP chain when $M(i)$ is a 0-ball (resp.~1-ball). 
\end{itemize}
This implies that for all $i$, we have $\trat(\Omega^{\MLQ}_i(M))=\Omega^{\TRAT}_{i-1}(\trat(M))$.
\end{lemma}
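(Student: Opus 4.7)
My plan is a case-by-case verification that exploits the characterization of $\trat$ from Section~\ref{sec_bij}: the tableau $\trat(M)$ is uniquely determined by the $X$-consistent list $(w_1,\ldots,w_\ell)$ of ball-drop weights at the $0$-balls of $M$, and these weights coincide with the left-arrow counts in the successive north-strips. By Lemma~\ref{uniqueness}, the claimed commutation $\trat(\Omega^{\MLQ}_i(M)) = \Omega^{\TRAT}_{i-1}(\trat(M))$ thus reduces to showing that the two sides produce TRATs of the same type with identical weight lists, which can be checked locally near the corner at edges $(i-1,i)$.

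The case identification rests on the following dichotomy. An occupied $0$-ball of $M$ has weight $0$ (its top-row ball drops from directly above and marks no vacancy), so in $\trat(M)$ the corresponding north-strip has no left-arrows and its up-arrow lies in the bottom-most $20$-tile, at the corner of $P(X)$. By contrast, a vacant $0$-ball or a $1$-ball at location $i$ forces the adjacent west-strip of $\trat(M)$ to contain a left-arrow in its rightmost tile, again at the corner. These are exactly the configurations that activate, respectively, the up-arrow and left-arrow transitions of $\Omega^{\TRAT}_{i-1}$, and inspecting the neighboring edge types recovers the four cases in the lemma ($20$- or $10$-corner for occupied jumps, $20$- or $21$-corner for vacant jumps).

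It then remains to verify equality of the weight lists. For an occupied jump, the MLQ transition inserts a new top-row ball at position $j+1$ where $j$ is the nearest non-zero index strictly left of $i-1$; tracing the new ball drop shows the new $0$-ball at $i-1$ acquires the unique weight compatible with the swap while all other $0$-balls' weights are unchanged. On the TRAT side, the up-arrow transition removes the north-strip at edge $i$ and reinserts one at the rightmost location $x$ with $L_+(s_N(x)) = L_+(\mathbf{s}) - 1 \bmod (k+r)$, which corresponds to placing a new $0$-edge at $i-1$ with the matching number of left-arrows. A symmetric argument using west-strips, the $\ell+r$ modulus, and the rule ``nearest non-two index strictly right of $i$'' handles the vacant jumps. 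The main obstacle is reconciling these two re-insertion rules, especially in the wrap-around special cases $X=10Y$ and $X=1Y2$ covered by $\Omega^{\TRAT}_0$ and $\Omega^{\TRAT}_n$; I would resolve this by reinterpreting $L_+(s_N(x))$ as the number of $0$-edges of $P(X)$ strictly between $x$ and the next $1$-edge met while tracing $p_N(x)$ northwest (and analogously for $L_+(s_W(x))$), at which point both sides reduce to the same arithmetic identity on the cyclic edge sequence and the proof concludes.
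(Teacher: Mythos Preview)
Your proposal is correct and follows essentially the same line as the paper's own proof. Both arguments rest on the same two observations: (i) an occupied $0$-ball has drop weight~$0$, so its north-strip carries no left-arrows and the corner tile holds an up-arrow, while a vacant ball forces a left-arrow in the corner tile; and (ii) the effect of each jump on the MLQ can be matched against the strip removal/reinsertion in $\Omega^{\TRAT}_{i-1}$ by tracking how the $X$-consistent weight list changes. The paper phrases the occupied case slightly differently, interpreting the jump as ``remove column $i$ and reinsert it to the right of column $j$'' (using that all intermediate bottom-row entries are $0$-balls, so the bottom row is unchanged), whereas you propose to trace the new drop directly; for the vacant case both you and the paper observe that exactly one $0$-ball (at location $j$) has its weight incremented by~$1$. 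Your explicit attention to the wrap-around cases $X=10Y$ and $X=1Y2$ and the reinterpretation of $L_+$ as a count along $P(X)$ is a point the paper's proof leaves implicit, so your plan is if anything more careful on that front.
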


\begin{proof}
Let $M$ be an MLQ and $T=\trat(M)$, and let $M'=\Omega^{\MLQ}_i(M)$ and $T'=\trat(M')$. 

If a ball at location $i$ is occupied, $M(i)=0$, and furthermore by the definition of our bijection there are no left-arrows in column $i$ of $T$, and hence an up-arrow is contained in the bottom-most possible tile. Thus if the edges $(i-1,i)$ are at a corner of $T$, that corner tile contains an up-arrow. Now, since $j<i-1$ is the largest index such that $M(j)\neq 0$, $M(j+1)=0$. Thus our definition of the occupied jump is equivalent to removing the column $i$ from $M$, and inserting it to the right of column $j$. On the other hand, in $T'$ that means removing column $i$ with the up-arrow in its bottom-most tile and re-inserting it to the left of edge $j$, where $j$ is the closest non-horizontal edge to the right of $i-1$. The new column has an up-arrow in its bottom-most box since $M'$ has an occupied ball at location $j$. The rest of $M'$ is left unchanged from $M$, and hence the rest of $T'$ is left unchanged from $T$. This is precisely the definition of the TRAT transition at corner $(i-1, i)$ with an up-arrow in that corner, so $T'=\Omega^{\TRAT}_{i-1}(T)$.

If a ball at location $i$ is vacant and $M(i)=0$ (resp.~$M(i)=1$), there is a left-arrow in the corner 20-tile (resp.~21-tile) at edges $(i-1,i)$ of $T$. (Note that If $M(i)=1$, we assume the tiling of $T$ has a 21-tile at the $(i-1,i)$ corner. If the tiling does not have such a tile, we perform filling-preserving flips until it does.) Since $j>i$ is the smallest index such that $M(j)\neq 2$, $M(j-1)=2$. Recall that in a vacant jump, every top row entry from column $j$ to $i-1$ is shifted one location to the right, and a vacancy is placed in the top row of column $j$. In particular, if $M(j)=0$, the hitting weight of the ball at location $j$ increases by 1, while keeping all others unchanged. On the other hand, in $T'$ that means removing row $i$ with the left-arrow in its right-most tile and re-inserting this row to the right of edge $j$, where $j$ is the closest non-vertical edge to the left of $i-1$. (We assume the row is inserted such that the tiling of $T'$ is standard.) Since the hitting weight of the ball at location $j$ increased by 1 while keeping all others unchanged, $T'$ has an extra left-arrow in column $j$, which corresponds precisely to inserting a row with a left-arrow in its right-most box to the right of edge $j$. The latter is precisely the definition of the TRAT transition at corner $(i-1, i)$ with a left-arrow in that corner, so $T'=\Omega^{\TRAT}_{i-1}(T)$, thus completing the proof.
\end{proof}

\subsection{Markov chain on acyclic multiline queues that projects to the inhomogeneous 2-TASEP with open boundaries}

There is a Markov chain on the acyclic MLQs, which has the same bulk transitions as $\Omega^{\MLQ}_{MC}$, that projects to the 2-TASEP with open boundaries. This Markov chain is obtained directly by pushing the Markov chain $\zeta^{\RAT}_{MC}$ on RAT from \cite{Man15a} through the $\RAT \rightarrow \AMLQ$ bijection. We call this Markov chain $\Omega^{\AMLQ}_{MC}$, which is defined by transitions $\Omega^{\AMLQ}_i$ for $1 \leq i \leq n+1$.

\begin{defn}
Let $A\in \AMLQ(n,r)$. For $2\leq i\leq n$, we define $\Omega^{\AMLQ}_i(A)=\Omega^{\MLQ}_i(A)$. For $i=1$ and $i=n+1$, we define the left and right boundary transitions $\Omega^{\AMLQ}_1$ and $\Omega^{\AMLQ}_{n+1}$ as follows, with examples shown in Figure \ref{AMLQ_fig}.
\begin{itemize}
\item \textbf{Left boundary transition $\Omega^{\AMLQ}_1$:} If $A$ has a 0-ball at its left boundary, let $j>1$ be the nearest index that does not contain a bottom row vacancy. The leftmost 0-ball is necessarily occupied by a ball above it. Replace the leftmost bottom row 0-ball by a vacancy, remove the leftmost top row ball, and shift all top row contents left of location $j+1$ one location to the left, and insert a vacancy in the top row of location $j$.
\item \textbf{Right boundary transition $\Omega^{\AMLQ}_{n+1}$:} If $A$ has a vacancy at its right boundary, let $j<n$ be the nearest index that does not contain a 0-ball. There is necessarily a top row vacancy above the rightmost bottom row vacancy. Remove the rightmost column and insert a column consisting of a 0-ball occupied by a ball above it at location $j+1$.
\end{itemize}
\end{defn}

\begin{figure}[!ht]
 \centerline{\includegraphics[width=\linewidth]{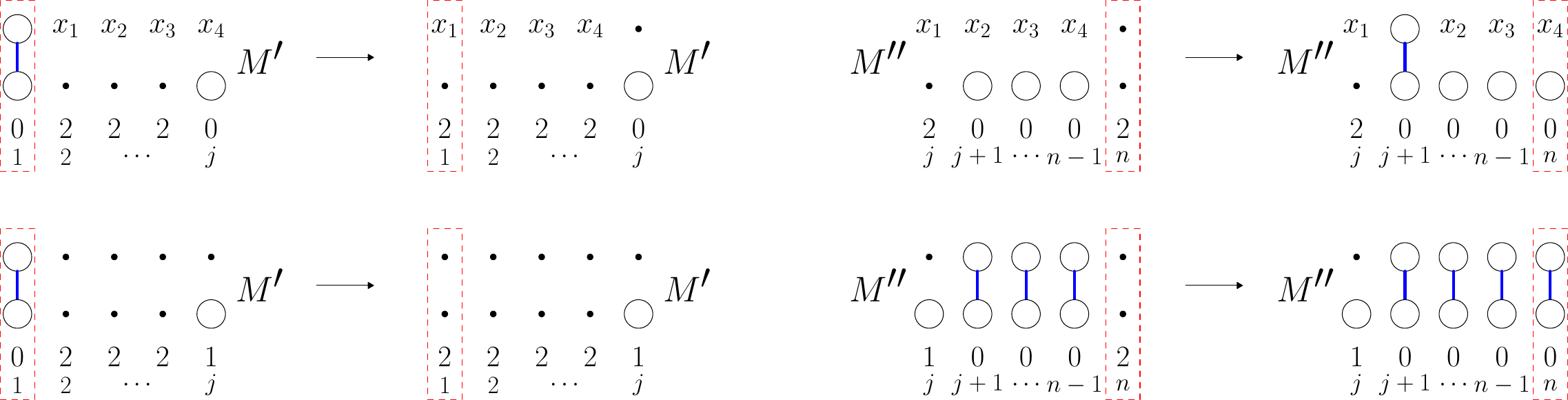}}
\centering
 \caption{On top, we see a left boundary transition on the left and a right boundary transition on the right. On the bottom we see special cases of each when $M(j)=1$.}\label{AMLQ_fig}
 \end{figure}

To show $\Omega^{\AMLQ}_{MC}$ indeed projects onto the inhomogeneous 2-TASEP with open boundaries, we use the fact that the AMLQ-RAT bjection is weight-preserving by the proof of Theorem \ref{amlq_main}, and refer back to the Markov chain on RAT from \cite{Man15a} combined with our proof of Lemma \ref{lem3}. 

\begin{thm}[\cite{Man15a}]
There is a Markov chain $\zeta^{\RAT}_{MC}$ on $\RAT(n,r)$, where each $R\in \RAT(n,r)$ has weight $\wt(R)$, that projects to the 2-TASEP with open boundaries of size $(n,r)$.
\end{thm}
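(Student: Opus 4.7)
The plan is to adapt the construction of $\Omega^{\TRAT}_{MC}$ from Section~\ref{sec_markov} to the open-boundary setting, where the rhombic diagram $\Gamma(X)$ is no longer a torus but a planar region whose northwest boundary is fixed. Concretely, I would define transitions $\zeta^{\RAT}_i$ for $1 \leq i \leq n-1$ that act on corners $(i, i+1)$ of $P(X)$ essentially by the same strip-insertion/removal procedure used for the TRAT chain: a corner tile containing a left-arrow triggers removal of its west-strip and reinsertion to the south via the path $p_W(x)$ at the first eligible location, while a corner tile containing an up-arrow triggers removal of its north-strip and reinsertion to the east via $p_N(x)$. Because strips in a RAT terminate at the northwest boundary rather than wrapping, ``first eligible location'' is measured with respect to the northwest endpoint instead of mod $k+r$ or $\ell+r$, and no torus special case (analogous to Case~(c) when $X=1Y2$) is needed.

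Next I would introduce the two boundary transitions. At $i=0$, corresponding to the 2-TASEP move $0X' \mathrel{\mathop{\to}^{\alpha}} 2X'$, the tableau $R$ of type $0X'$ has its leftmost edge a west-edge; the transition $\zeta^{\RAT}_0$ deletes this edge, inserts a fresh west-strip $s_W$ along the top of $\Gamma(X)$ corresponding to a new 2-edge, and places a left-arrow in its rightmost (boundary) 20- or 21-tile. Symmetrically, at $i=n$, the transition $\zeta^{\RAT}_n$ for $X'2 \mathrel{\mathop{\to}^{\beta}} X'0$ deletes the terminal 2-edge together with its adjacent west-strip and closes up the diagram. These are the RAT analogues of the reservoir moves, and the corresponding rates are $\alpha$ and $\beta$.

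With the transitions in place, I would verify the projection property by the same two-step argument used for the TRAT chain: first, an analogue of Lemma~\ref{lem1} (for each 2-TASEP transition $X \to Y$ and each $R$ of type $X$, there is a unique $\zeta^{\RAT}_i(R)$ of type $Y$) follows by construction, since each $\zeta^{\RAT}_i$ swaps only the edges at position $(i,i+1)$ (or inserts/removes a single boundary edge). Second, I would establish detailed balance with respect to $\wt_e$ by mimicking the case analysis of Lemma~\ref{lem3}: for each bulk transition, pair $R \to R'$ with the unique reverse transition $R'' \to R$ determined by the nearest inner corner, and check in each of the eight $\{\text{20/21/10 corner}\} \times \{\text{up/left arrow}\}$ cases that $\wt_e(R)\cdot \pr(R \to R') = \wt_e(R'') \cdot \pr(R'' \to R)$; the verification is identical to Lemma~\ref{lem3}. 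For the boundary transitions one additionally checks that inserting (resp.~removing) a boundary west-strip multiplies the weight by $\alpha$ (resp.~$\alpha^{-1}$), which matches the $\alpha$ rate in the Uchiyama Ansatz, and similarly for $\beta$ at the right boundary.

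The main obstacle will be the boundary cases: in the toric setting every strip wraps and always has a well-defined ``length'' $L_+$, but for open boundaries the insertion $p_W$ or $p_N$ may have to terminate at the outer boundary of $\Gamma(X)$, which forces a choice of how $\ufree$ and $\lfree$ change. One must confirm that under $\zeta^{\RAT}_0$ and $\zeta^{\RAT}_n$ the quantities $\ufree(R)$ and $\lfree(R)$ track correctly so that the enhanced weight $\wt_e$ of Definition~\ref{trat_weight_inhomog} transforms by exactly the factor of $\alpha$ or $\beta$ demanded by the rate. Once that bookkeeping is handled, Proposition~\ref{mc_walk} combined with Theorem~\ref{RAT_thm} (or its inhomogeneous refinement Theorem~\ref{RAT_weighted_thm}) gives the projection, and pushing the chain through the RAT--AMLQ bijection yields the claimed chain on acyclic multiline queues.
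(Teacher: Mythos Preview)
Your overall strategy is sound and essentially matches what the paper (which here only cites \cite{kRAT} and restates the transitions in Definition~\ref{RAT_markov_def} rather than giving a fresh proof) does: define bulk transitions by the same strip removal/reinsertion as in $\Omega^{\TRAT}_{MC}$, add two boundary moves, then verify projection and detailed balance. The bulk part and the appeal to the Lemma~\ref{lem1}/Lemma~\ref{lem3} case analysis are fine.

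However, your description of the boundary transitions is not right, and this is a genuine gap, not just a slip of orientation. For $\zeta^{\RAT}_n$ (the move $X'2\to X'0$) you propose to ``delete the terminal $2$-edge together with its adjacent west-strip and close up the diagram.'' The terminal $2$-edge sits at the bottom-left of $P(X)$, so its west-strip has length zero; deleting it produces a tableau of type $X'$, not $X'0$. The correct move (see Definition~\ref{RAT_markov_def}) is dual to your $\zeta^{\RAT}_0$: remove the vertical $2$-edge and \emph{insert a north-strip} of greatest possible length at the rightmost admissible location with an up-arrow in its bottom-most tile, creating the new $0$-edge. Without that north-strip insertion your chain does not land in $\RAT(X'0)$, and the detailed-balance bookkeeping for $\beta$ (which hinges on how $\lfree$ and the up-arrow count change under insertion of that strip) cannot be carried out. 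Your $\zeta^{\RAT}_0$ is morally correct but also mis-specified: the $0$-edge being replaced is the \emph{rightmost} (first) edge of $P(X)$, not the leftmost, and the new west-strip is inserted at the \emph{lowest} possible location with maximal length, not ``along the top.'' Once you fix both boundary moves to this insert-a-maximal-strip form, the $\alpha,\beta$ factors in $\wt(R)=\alpha^{n-r-\ufree}\beta^{n-r-\lfree}$ track exactly as in cases (e)--(h) of Proposition~\ref{rat_inhomog_markov}, and the rest of your argument goes through. (Note also that the theorem as stated is about $\wt$, not $\wt_e$; your proposal drifts into the inhomogeneous refinement, which is Proposition~\ref{rat_inhomog_markov} rather than this statement.)
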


We briefly describe the transitions of $\zeta^{\RAT}_{MC}$, and refer to \cite{Man15a} for proofs and technical details. 

\begin{defn}\label{RAT_markov_def}
The transitions of $\zeta^{\RAT}_{MC}$ are maps
\[\zeta^{\RAT}_i: \RAT(n,r)\rightarrow \RAT(n,r)
\]
for $0\leq i\leq n$, and are defined as follows.

For $1\leq i<n$, let $\zeta^{\RAT}_i$ be a transition occurring at edges $(i,i+1)$; in the 2-TASEP word $X=X_1\ldots X_n$, this corresponds to the transition $X_iX_{i+1}\rightarrow X_{i+1}X_i$.  The boundary transition at the first edge of the RAT is $\zeta^{\RAT}_0$, which is the transition $0X' \rightarrow 2X'$ in the 2-TASEP chain. The boundary transition at the last edge of the RAT is $\zeta^{\RAT}_n$, which is the transition $X'2 \rightarrow X'0$ in the 2-TASEP chain.

For $1\leq i<n$, if $X_i>X_{i+1}$, assume the tiling $\mathcal{T}_X$ has an $X_iX_{i+1}$-tile adjacent to the corresponding corner. We have two possible cases for the contents of that tile. 
\begin{itemize}
\item If the tile contains an up-arrow and is in a north-strip of length $\geq 2$, $\zeta^{\RAT}_i(R)$ is a RAT obtained by removing the north-strip beginning at the corner, shortening it by tile, and re-inserting it in the rightmost possible location with an up-arrow still in its bottom-most tile. If the north-strip had length 1 to start, it is reinserted as a single horizontal edge at the rightmost point of $R$.
\item If the tile contains a left-arrow and is in a west-strip of length $\geq 2$, $\zeta^{\RAT}_i(R)$ is a RAT obtained by removing the west-strip beginning at the corner, shortening it by tile, and re-inserting it in the bottom-most possible location with a left-arrow still in its right-most tile. If the strip had length 1, it is reinserted as a single vertical edge at the leftmost point of $R$.
\end{itemize}

For $i=0$, the rightmost boundary edge of $R$ must be horizontal. To obtain $\zeta^{\RAT}_0(R)$, this edge is removed, and instead a west-strip of greatest possible length is inserted, while preserving the semi-perimiter of the RAT. The strip is inserted in the lowest possible location and a left-arrow is placed in its rightmost tile. 

For $i=n$, the leftmost boundary edge of $R$ must be vertical. To obtain $\zeta^{\RAT}_n(R)$, this edge is removed, and instead a north-strip of greatest possible length is inserted, while preserving the semi-perimiter of the RAT. The strip is inserted in the rightmost possible location and an up-arrow is placed in its bottom-most tile. 

In all other cases, $\zeta^{\RAT}_i$ is trivial. Figure \ref{RAT_markov} shows examples of each of these transitions. Observe that for $1\leq i <n$, the transitions $\zeta^{\RAT}_i$ and $\Omega^{\TRAT}_i$ are essentially identical. 
\end{defn}

\begin{figure}[!ht]
 \centerline{\includegraphics[width=.8\linewidth]{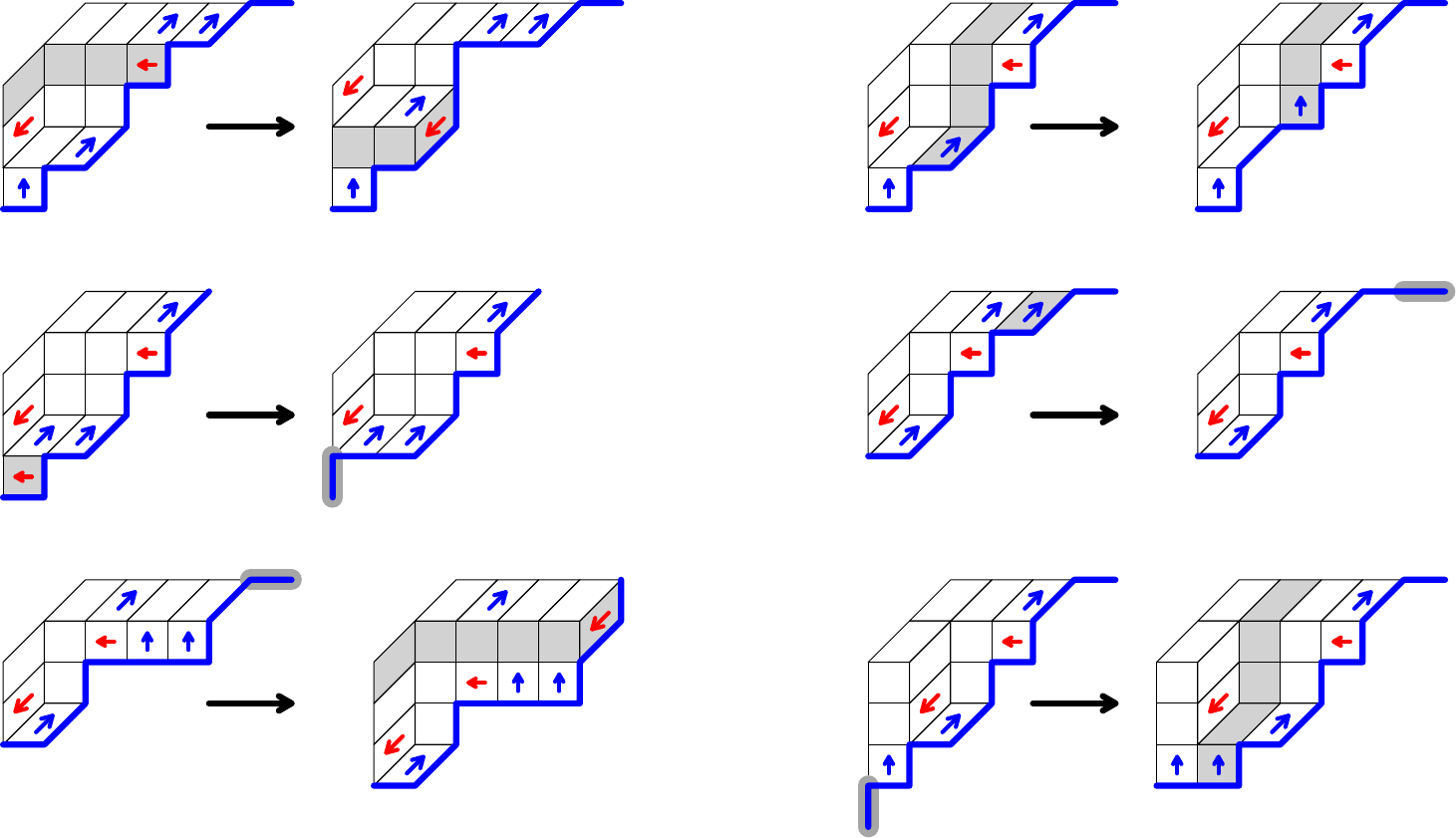}}
\centering
 \caption{Examples of some of the possible transitions on the RAT Markov chain $\zeta^{\RAT}_{MC}$, with left-arrow transitions shown on the left, up-arrow transitions on the right, and boundary transitions on the bottom. The highlighted strips are those which are removed and subsequently reinserted}\label{RAT_markov}
 \end{figure}

We will show the following.

\begin{prop}\label{rat_inhomog_markov}
The Markov chain $\zeta^{\RAT}_{MC}$ on $\RAT(n,r)$, where each $R\in \RAT(n,r)$ has weight $\wt_e(R)$, and whose transitions are given by parameters $\alpha$, $\beta$, $t=1$, $d$, and $e$ projects to the inhomogeneous 2-TASEP with open boundaries of size $(n,r)$.
\end{prop}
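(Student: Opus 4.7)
The plan is to adapt the strategy used for the toric case (Lemma \ref{lem3} and its corollary about $\Omega^{\TRAT}_{MC}$) to the open-boundary setting, leveraging the fact that \cite{kRAT} already establishes that the homogeneous version of $\zeta^{\RAT}_{MC}$, with weights $\wt(R) = \alpha^{k+\#\{\text{up-arrows}\}}\beta^{\ell+\#\{\text{left-arrows}\}}$, projects onto the two-species TASEP with open boundaries. That result gives the structural part of projection for free: each transition $\zeta^{\RAT}_i$ either is the identity or corresponds to a unique 2-TASEP transition on types via the surjection $f: \RAT(n,r) \to \TASEP(n,r)$ sending $R$ to its type, so in particular Lemma \ref{lem1}'s analog for RAT holds. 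Thus all that remains is to upgrade detailed balance from $\wt$ to the enhanced weight $\wt_e$ in the presence of the new $d$ and $e$ rates.

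For the bulk transitions $\zeta^{\RAT}_i$ with $1 \leq i < n$, I would essentially quote Lemma \ref{lem3}. Since Definition \ref{RAT_markov_def} acts on corners in the same way that $\Omega^{\TRAT}_i$ acts on corners of a TRAT (the only difference being the absence of toric wrap-around, which plays no role for interior edges), the eight-case analysis of Lemma \ref{lem3} applies verbatim. For each combination of corner type (20, 21, or 10) paired with an arrow type (left-arrow or up-arrow) and its associated inner corner (02, 12, 01), I would identify $R'' = (\zeta^{\RAT}_i)^{-1}(R)$, count the change in $\Left(R)$ and $\Up(R)$ (and hence in the power of $d$ and $e$ in $\wt_e$), and verify that the change matches the $d^{\pm 1}$ or $e^{\pm 1}$ factor appearing in $\pr(R \to R')$ versus $\pr(R'' \to R)$. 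This yields $\wt_e(R)\pr(R \to R') = \wt_e(R'')\pr(R'' \to R)$ in every bulk case.

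Next I would handle the boundary transitions $\zeta^{\RAT}_0$ and $\zeta^{\RAT}_n$, which correspond to $0X' \to 2X'$ with rate $\alpha$ and $X'2 \to X'0$ with rate $\beta$ respectively. For $\zeta^{\RAT}_0$, a horizontal boundary edge is removed and a maximal west-strip, lowest possible with a left-arrow in its rightmost tile, is inserted. I would check that exactly one new west-strip is created and that, by maximality of the inserted strip's length and its lowest placement, the change in $\lfree$ and in $\Left$ (restricted to the interior 20-tiles) are related in a way that lets the $\beta$ and $d$ factors rebalance cleanly: no new interior 20-tile containing a left-arrow is forced, so the power of $d$ in $\wt_e$ is unaffected by the insertion, and the only net change in $\wt_e$ is the expected adjustment in the power of $\beta$. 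The case of $\zeta^{\RAT}_n$ is entirely symmetric with $\alpha \leftrightarrow \beta$, left-arrows $\leftrightarrow$ up-arrows, $\lfree \leftrightarrow \ufree$, and $d \leftrightarrow e$.

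The main obstacle will be the boundary step: unlike in the bulk, inserting or removing an entire maximal strip potentially affects many tiles in the filling, so one must show carefully that the only net contribution to $\wt_e$ comes from the created or destroyed boundary strip together with the change in $\ufree$ or $\lfree$, and not from a hidden change in $\Left(R)$ or $\Up(R)$ inside the tableau. Once this accounting is established, combining the bulk and boundary detailed-balance identities with the structural projection from \cite{kRAT} and Proposition \ref{mc_walk} yields the desired projection of $\zeta^{\RAT}_{MC}$, weighted by $\wt_e$, onto the inhomogeneous 2-TASEP with open boundaries.
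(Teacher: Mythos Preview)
Your overall strategy matches the paper's: establish detailed balance for $\wt_e$ by exhibiting, for each nontrivial $R\to R'=\zeta^{\RAT}_i(R)$, a tableau $R''=(\zeta^{\RAT}_i)^{-1}(R)$ with $\wt_e(R)\pr(R\to R')=\wt_e(R'')\pr(R''\to R)$, and then invoke the structural projection from \cite{kRAT}. The paper carries this out via a sixteen-case analysis (eight left-arrow cases and their up-arrow mirrors).

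There is, however, a real gap in your bulk step. You assert that for $1\le i<n$ the eight cases of Lemma~\ref{lem3} apply verbatim, but that lemma locates $R''$ via the closest inner corner $\mathbf{u}$ on the appropriate side of $\mathbf{c}$. On the torus such an inner corner always exists; on an open strip it need not. Concretely, if $X=2^{i}0Z$ (resp.\ $X=2^{i}1Z$) and the left-arrow sits at corner $(i,i+1)$, there is no inner corner to its right, so cases (a)--(d) of Lemma~\ref{lem3} do not apply. In these situations the correct $R''$ arises from a \emph{boundary} transition: $R''$ has type $02^{i-1}\{0,1\}Z$ and $R''\to R$ is $\zeta^{\RAT}_0$ with rate $\alpha$. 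The paper isolates precisely these mixed bulk/boundary pairings as four additional cases (e)--(h) beyond (a)--(d); your dichotomy ``bulk via Lemma~\ref{lem3}, boundary separately'' misses them.

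Your boundary paragraph also has inaccuracies. For $\zeta^{\RAT}_0$ the relevant rate is $\alpha$, not $\beta$, and the weight change involves $\ufree$ (hence the $\alpha$-power), not $\lfree$. More importantly, your claim that ``no new interior 20-tile containing a left-arrow is forced, so the power of $d$ in $\wt_e$ is unaffected'' is false: the rightmost tile of the inserted west-strip is a 20-tile when the insertion corner is of type 02 and a 21-tile when it is of type 12, so $\Left$ may or may not increase, and the $d$-exponent in $\wt_e$ changes accordingly. The paper's cases (e)--(h) track exactly this: e.g.\ in (e) one has $\Left(R'')=\Left(R)-1$ and $\ufree(R'')=\ufree(R)+1$, whence $\wt_e(R'')=\alpha^{-1}\wt_e(R)$, balanced by $\pr(R''\to R)=\alpha$; in (f) the $d$-factor genuinely appears and is compensated by the rate $d$ on the bulk side. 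You will need to split your boundary analysis into these subcases rather than assert a uniform statement.
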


\begin{proof}
As in the proof of Lemma \ref{lem3} for the analogous result for the TRAT, the strategy of our proof is to show that detailed balance is preserved when each $R\in\RAT(n,r)$ has $\pr(R)=\wt_e(R)$. Namely, let $R'=\zeta^{\RAT}_i(R)$ for some $i$ such that $R\neq R'$. Then there exists some $R''\in\RAT(n,r)$ such that $\wt_e(R)\pr(R\rightarrow R')=\wt_e(R'')\pr(R''\rightarrow R)$. As in the proof of \ref{lem3}, we use the fact that the reverse Markov transitions of $\zeta^{\RAT}_{MC}$ are well-defined and set $R''=(\zeta^{\RAT}_i)^{-1}(R)$. 

There are sixteen possible cases for such triples $R$, $R'$, and $R''$, which we describe below.

\begin{figure}[!ht]

 \centerline{\includegraphics[width=.9\linewidth]{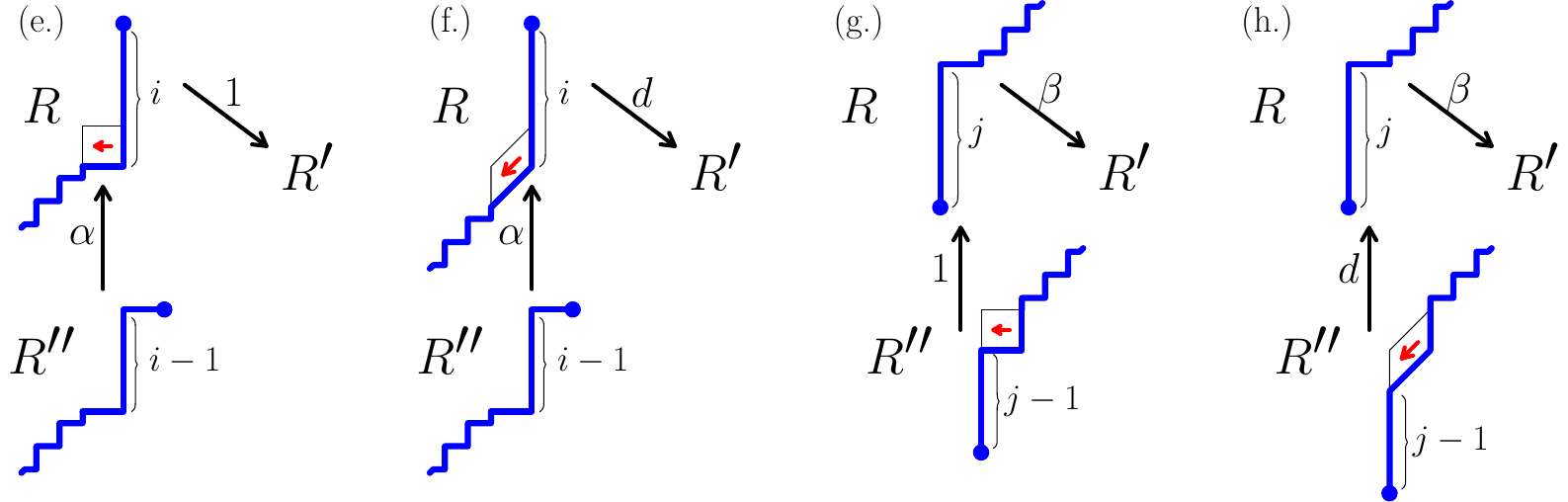}}
\centering
 \caption{This figure shows the triple $R$, $R'$, and $R''$ that satisfies $\wt(R)\pr(R\rightarrow R')=\wt(R'')\pr(R''\rightarrow R)$ for the cases (e)-(h) of Lemma \ref{rat_inhomog_markov}. The highlighted tiles and edges represent the location at which the Markov transition occurs.}\label{detailed1}
 \end{figure}

\textbf{Left-arrow transition at corner $\textbf{(i, i+1)}$:} all the cases are illustrated in Figures \ref{inhomog_trans} and \ref{detailed1}.
\begin{enumerate}
\item[(a.)] $X=Y02^j0Z$ with $|Y|=i-j-1$. Then $X''=Y202^{j-1}0Z$ and in $R$ the left-arrow moves from the rightmost tile of strip $i$ to the rightmost tile of strip $i-j$ on $\Gamma(X'')$ to form $R''$. Then all statistics of $R$ and $R''$ are equal so $\wt(R)=\wt(R'')$. $\pr(R\rightarrow R')=\pr(R''\rightarrow R)=1$.
\item[(b.)] $X=Y12^j0Z$, $X''=Y212^{j-1}0Z$ with $|Y|=i-j-1$; then $\Left(R'')=\Left(R)-1$ with all other statistics equal. Then $\wt(R'')=d^{-1}\wt(R)$ and $\pr(R''\rightarrow R)=d\pr(R\rightarrow R')=d$.
\item[(c.)] $X=Y02^j1Z$, $X''=Y202^{j-1}1Z$ with $|Y|=i-j-1$; then $\Left(R'')=\Left(R)+1$ with all other statistics equal. Then $\wt(R'')=d\wt(R)$ and $\pr(R''\rightarrow R)=d^{-1}\pr(R\rightarrow R')=1$.
\item[(d.)] $X=Y12^j1Z$. Then $X''=Y212^{j-1}1Z$ with $|Y|=i-j-1$; then $\Left(R'')=\Left(R)$ with all other statistics equal. Then $\wt(R'')=\wt(R)$ and $\pr(R''\rightarrow R)=\pr(R\rightarrow R')=d$.
\item[(e.)] $X=2^i0Z$. Then $X''=02^{i-1}0Z$ and in $R$ the left-arrow is removed from the rightmost tile of strip $i$ and replaced by a 0-edge at the right of $\Gamma(X'')$ to form $R''$. Then $\Left(R'')=\Left(R)-1$, $\ufree(R'')=\ufree(R)+1$, $R''$ has size $(k-1,r,\ell+1)$, and all other statistics of $R$ and $R''$ are equal. Then $\wt(R'')=\alpha^{-1}\wt(R)$ and $\pr(R\rightarrow R')=\alpha\pr(R''\rightarrow R)=\alpha$.
\item[(f.)] $X=2^i1Z$. Then $X''=02^{i-1}1Z$ and in $R$ the left-arrow is removed from the rightmost tile of strip $i$ and replaced by a 0-edge at the right of $\Gamma(X'')$ to form $R''$. Then $\ufree(R'')=\ufree(R)+1$, $R''$ has size $(k-1,r,\ell+1)$ with all other statistics equal. Then $\wt(R'')=d\alpha^{-1}\wt(R)$ and $\pr(R\rightarrow R')=\alpha d^{-1}\pr(R''\rightarrow R)=\alpha$.
\item[(g.)] $i=n$ and $X=Z02^{j}$. Then $X''=Z202^{j-1}$ and in $R$ the bottom-most 2-edge of $R$ is removed and is replaced by a left-arrow in the rightmost tile of strip $n-j$ of $\Gamma(X'')$ to form $R''$. Then $\Left(R'')=\Left(R)+1$ and $\lfree(R'')=\lfree(R)-1$,  with all other statistics equal. Then $\wt(R'')=\beta\wt(R)$ and $\pr(R\rightarrow R')=\beta^{-1}\pr(R''\rightarrow R)=1$.
\item[(h.)] $i=n$ and $X=Z12^{j}$. Then $X''=Z212^{j-1}$ and in $R$ the bottom-most 2-edge of $R$ is removed and is replaced by a left-arrow in the rightmost tile of strip $n-j$ of $\Gamma(X'')$ to form $R''$. Then $\lfree(R'')=\lfree(R)-1$, with all other statistics equal. Then $\wt(R'')=\beta d^{-1}\wt(R)$ and $\pr(R\rightarrow R')=d\beta^{-1}\pr(R''\rightarrow R)=d$.
\end{enumerate}

\textbf{Up-arrow transition at corner $\textbf{(i, i+1)}$}

By symmetry, we get this case for free: if we take (a)-(h) for the left-arrow transition, read $X$ and $X''$ from right to left, swap 2 with 0, swap up-arrows with left arrows, swap $\alpha$ with $\beta$, swap $d$ with $e$, swap $\Left()$ with $\Up()$, and swap $\lfree()$ with $\ufree()$, we obtain precisely the eight cases for the up-arrow transition.

Thus we found $R''$ given $R\rightarrow R'$ such that $\wt(R)\pr(R\rightarrow R')=\wt(R'')\pr(R''\rightarrow R)$ holds.
\end{proof}

\begin{lemma}\label{lem4}
Let $A\in\AMLQ$. Then $\rat(\Omega^{\AMLQ}_i(A))=\zeta^{\RAT}_{i-1}(\rat(A))$.
\end{lemma}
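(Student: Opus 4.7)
I would split the argument into the bulk case $2 \leq i \leq n$ and the two boundary cases $i = 1$ and $i = n+1$.

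For the bulk case, I would exploit the factorization of $\rat$ established in the proof of Theorem~\ref{amlq_main}: $\rat(A)$ is obtained by forming $A_M = 1A1 \in \MLQ(1X1)$ (prepending and appending a column with a bottom-row 1-ball and top-row vacancy), applying $\trat$ to produce $T$, flipping to a standardized tiling in which the leftmost 21-strip and the rightmost 10-strip lie flush against the northwest boundary, and excising those two extremal diagonal strips. Since $\Omega^{\AMLQ}_i = \Omega^{\MLQ}_i$ for $2\leq i \leq n$ acts only at interior positions $(i-1,i)$ of $A$, it leaves the boundary 1-ball columns of $A_M$ untouched; the position shift by one induced by the prepended column gives $(\Omega^{\MLQ}_i(A))_M = \Omega^{\MLQ}_{i+1}(A_M)$. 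Applying the lemma of Section~\ref{sec_MLQ_markov} with $j=i+1$ yields $\trat(\Omega^{\MLQ}_{i+1}(A_M)) = \Omega^{\TRAT}_i(T)$. Since the TRAT transition $\Omega^{\TRAT}_i$ acts at interior edges $(i,i+1)$ of $P(1X1)$ and does not disturb the two extremal diagonal strips, stripping them off yields $\zeta^{\RAT}_{i-1}(\rat(A))$, because the bulk transitions of $\zeta^{\RAT}$ are defined by exactly the same strip-removal/reinsertion rules as $\Omega^{\TRAT}$ (and no wrap-around is triggered by bulk indices).

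For the boundary case $i = 1$, where $X = 0X'$, I would work directly from the definitions. $\Omega^{\AMLQ}_1$ changes position 1's 0-ball to a vacancy, deletes the top-row ball above it, shifts the top row leftward, and inserts a new top-row vacancy at the leftmost non-vacancy position $j$; meanwhile $\zeta^{\RAT}_0$ removes the rightmost horizontal edge of $R = \rat(A)$ and inserts a west-strip of greatest possible length at the lowest possible location with a left-arrow in its rightmost tile. I would verify the correspondence by running the ball-drop algorithm on $\Omega^{\AMLQ}_1(A)$ and comparing the resulting arrow configuration in its RAT to $\zeta^{\RAT}_0(R)$. The key correspondences to check are: the length of the newly inserted west-strip equals $j-1$, tracking the cascade of the top-row shift; the new left-arrow in the rightmost tile records the newly unrestricted status at position $j$; and the remaining left-arrows in the inserted strip record the previously unmarked vacancies that have become marked. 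The case $i = n+1$ is handled symmetrically, with $\Omega^{\AMLQ}_{n+1}$ matching $\zeta^{\RAT}_n$ via insertion of a maximal north-strip at the rightmost possible location with an up-arrow in its bottom-most tile; here the symmetry is between 20-tiles and their left/up arrows, and between the rightmost 10-strip and leftmost 21-strip of $T$.

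The main obstacle will be the boundary cases. The bulk case essentially reduces to previously established facts: the MLQ--TRAT compatibility lemma and the weight-preserving strip-excision step of the AMLQ--RAT bijection. The boundary transitions, however, are intrinsically different from bulk transitions on both sides: on the RAT side they change the shape of the diagram by swapping the type of a boundary edge and inserting a fresh maximal strip, while on the AMLQ side they are defined to mimic the 2-TASEP boundary dynamics rather than any MLQ bulk move, so no direct appeal to the MLQ--TRAT lemma is possible. Verifying their correspondence therefore requires a careful case analysis tracking how the ball-drop output on $A$ changes under the small boundary perturbation and matching this to the precise geometry (``greatest possible length'' and ``lowest/rightmost location'') of the newly inserted strip on the RAT side.
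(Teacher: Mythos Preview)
Your overall decomposition (bulk versus boundary) and the idea of routing the bulk argument through $A_M=1A1$ and the MLQ--TRAT compatibility lemma are sound and very close to what the paper does. However, you have overlooked one subcase that the paper treats explicitly.

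In your bulk argument you assert that $\Omega^{\TRAT}_i$ on $T=\trat(A_M)$ ``does not disturb the two extremal diagonal strips,'' so that excising them afterwards yields $\zeta^{\RAT}_{i-1}(\rat(A))$. This is true only when the strip at the corner $(i,i+1)$ has length at least $2$ in the RAT. When that strip has length exactly $1$ (for instance when $X=0^j20Y'$ or $X=0^j10Y'$ for the up-arrow side, and symmetrically $X=Y'202^j$ or $X=Y'212^j$ for the left-arrow side), the RAT transition $\zeta^{\RAT}$ is by definition \emph{not} the usual strip-shortening move: the length-$1$ strip is removed and replaced by a bare boundary edge. On the TRAT side, the reinserted strip now has positive length equal to $1$ (or $0\bmod(k+r)$), so it lands entirely inside the extremal diagonal region you are about to excise. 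The claim that excision then produces $\zeta^{\RAT}_{i-1}(\rat(A))$ is plausible, but it is not automatic and requires a separate check. The paper singles out exactly these length-$1$ cases as items (1) and (2) in its proof and verifies them directly at the level of the AMLQ (where they correspond to $A$ having a run of occupied $0$-balls at its left end, or a run of vacancies at its right end).

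Your treatment of the boundary cases $i=1$ and $i=n+1$ matches the paper's items (3) and (4); the paper carries this out more concretely by identifying the index $u$ of the first non-vacancy (resp.\ last non-$0$-ball) and reading off that the inserted column in $\Omega^{\AMLQ}$ corresponds exactly to the inserted maximal west-strip (resp.\ north-strip) in $\zeta^{\RAT}$. Your plan here is correct in outline but would benefit from the same explicitness.
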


\begin{proof}
The transitions $\Omega^{\TRAT}_i$ and $\zeta^{\RAT}_i$ are identical when $1 \leq i<n$ and the strip containing the arrow in tile $(i,i+1)$ strip has length $\geq 2$. Following our bijections, the corresponding transitions $\Omega^{\MLQ}_{i+1}$ and $\Omega^{\AMLQ}_{i+1}$ are identical, as well. It remains to check the following cases: 
\begin{enumerate}
\item[(i.)] The strip containing the arrow in tile $(i, i+1)$ has length 1,
\item[(ii.)] $i=0$, and 
\item[(iii.)] $i=n$.
\end{enumerate}

Let $R=\rat(A)$ with type $X$. We show the following:
\begin{enumerate}
\item For some $j\geq 0$, if $X=0^j20Y'$ or $X=0^j10Y'$, the transition $\Omega^{\AMLQ}_{j+2}$ on $A$ corresponds to the up-arrow transition $\zeta^{\RAT}_{j+2}$ on $R$. 
\item For some $j\geq 0$, if $X=Y'202^j$, or $X=Y'212^j$, the transition $\Omega^{\AMLQ}_{n-j}$ on $A$ corresponds to the left-arrow transition $\zeta^{\RAT}_{n-j}$ on $R$.
\item $\Omega^{\AMLQ}_1$ on $A$ corresponds to the right boundary transition $\zeta^{\RAT}_0$ on $R$, from Definition \ref{RAT_markov_def}. 
\item $\Omega^{\AMLQ}_{n+1}$ on $A$ corresponds to the left boundary transition $\zeta^{\RAT}_n$ on $R$, from Definition \ref{RAT_markov_def}.
\end{enumerate}

For (1.), the AMLQ $A$ must have occupied 0-balls in its first $j$ columns (from the left), with an occupied 0-ball at location $j+2$. Then $\Omega^{\AMLQ}_{j+2}(A)$ is an AMLQ with occupied 0-balls in its first $j+1$ columns, followed by a vacancy or a 1-ball, with the rest of the AMLQ identical to $A$. This is precisely equivalent to removing the tile at the $(n-j-1, n-j)$ corner from $R$, which is indeed equal to $\zeta^{\RAT}_{n-j-1}$.

For (2.), the AMLQ $A$ must have vacancies in the top and bottom row in its rightmost $j$ columns, with a vacant 0-ball at location $j+2$. Then $\Omega^{\AMLQ}_{n-j}(A)$ is an AMLQ with vacancies in its rightmost $j+1$ columns, followed (on the left) by a 0-ball or a 1-ball, with the rest of the AMLQ identical to $A$. This is precisely equivalent to removing the tile at the $(j+1, j+2)$ corner from $R$, which is indeed equal to $\zeta^{\RAT}_{j+1}$.

For (3.), we let $u=\min\{u>1:\ A(u)\neq 2\}$. Inserting a vacancy above the ball at location $u$ in $\Omega^{\AMLQ}_1(A)$ results in a left-arrow being added to the north-strip $u$ in $\zeta^{\RAT}_0(R)$. Since $(u-1, u)$ is a corner, and since we have replaced the first 0-ball with a vacancy, this is equivalent to adding a west-strip with a left-arrow in its rightmost tile at the $(u-1, u)$ corner, which is precisely the definition of $\zeta^{\RAT}_0$. 

For (4.), we let $u=\max\{u<n:\ A(u)\neq 0\}$. A column of an MLQ consisting of an occupied 0-ball corresponds to a north-strip with an up-arrow in the bottom-most free location. Since $(u, u+1)$ is a corner of $\zeta^{\RAT})_n(R)$, a column in $\Omega^{\AMLQ}_{n+1}(A)$ consisting of an occupied 0-ball at location $u+1$ corresponds to a north-strip with an up-arrow in its bottom-most tile adjacent to that corner. This is precisely the definition of $\zeta^{\RAT}_n$.
\end{proof}

\section{Concluding remarks}

The tableaux method in this paper has a few advantages. First, it allows us to solve a more general, symmetric version of the inhomogeneous TASEP on a ring with three parameters for the hopping rates. Second, it establishes a connection between the well-studied multiline queue method of Ferrari and Martin solving the multispecies TASEP on a ring, and the alternative tableaux method originally introduced by Corteel and Williams. The multiline queues give combinatorics for the multispecies TASEP on a ring for any number of species but only for $q=0$. On the other hand, thus far the tableaux method has only been useful the 2-ASEP with open boundaries, albeit for general $q$. Our bijection makes us hopeful to find tableaux combinatorics for the two-species ASEP on a ring with general $q$. Furthermore, it would be interesting to put weights on the multiline queues or the acyclic MLQs to incorporate the $q$ parameter.

\textbf{Acknowledgement.} I am grateful to Sylvie Corteel and Lauren Williams for the inspiration, as well as many useful conversations. I would also like to thank Arvind Ayyer and Svante Linusson for discussion and useful input. The author was partially supported by NSF grant DMS-1704874 and the UC Presidential Postdoctoral Fellowship Program at UCLA during the completion of this work.

\end{document}